\newcommand{\Z}{{\mathbb Z}}
\renewcommand{\S}{{\mathbb S}}
\newcommand{\R}{{\mathbb R}}
\newcommand{\C}{{\mathbb C}}
\newcommand{\D}{{\mathbb D}}
\newcommand{\N}{{\mathbb N}}
\renewcommand{\H}{{\mathbb H}}
\newcommand{\uc}{\overline{u}}
\newcommand{\vc}{\overline{v}}
\def\Res{{\,\rm Res}}
\def\Re{{\rm Re}}
\def\Im{{\rm Im}}
\def\Id{{\rm Id}}
\def\ii{{\rm i}}
\def\tr{{\rm trace}}
\def\SL{{\rm SL}}
\def\SU{{\rm SU}}
\def\cvx{\cv{x}}
\renewcommand{\matrix}[1]{\left(\begin{array}{cc} #1\end{array}\right)}
\newcommand{\wt}[1]{\widetilde{#1}}
\newcommand{\wh}[1]{\widehat{#1}}
\newcommand{\cal}[1]{{\mathcal #1}}
\newcommand{\low}[1]{{#1}_{\mbox{\scriptsize lower}}}
\newcommand{\lowind}[2]{{#1}_{\mbox{\scriptsize lower},#2}}
\theoremstyle{plain}
\newtheorem{theorem}{Theorem}
\newtheorem*{theorem*}{Theorem}
\newtheorem{lemma}{Lemma}
\newtheorem{proposition}[lemma]{Proposition}
\newtheorem{remark}[lemma]{Remark}
\newtheorem{corollary}[lemma]{Corollary}
\newtheorem{claim}[lemma]{Claim}
\newtheorem{ansatz}[lemma]{Ansatz}
\newtheorem{definition}[lemma]{Definition}
\newtheorem{example}[lemma]{Example}
\newtheorem{convention}[lemma]{Convention}
\DeclareFontFamily{U}{mathx}{\hyphenchar\font45}
\DeclareFontShape{U}{mathx}{m}{n}{
      <5> <6> <7> <8> <9> <10>
      <10.95> <12> <14.4> <17.28> <20.74> <24.88>
      mathx10
      }{}
\DeclareSymbolFont{mathx}{U}{mathx}{m}{n}
\DeclareMathAccent{\widecheck}{0}{mathx}{"71}
\DeclareMathAccent{\widetilde}{0}{mathx}{"72}
\DeclareMathAccent{\widebar}{0}{mathx}{"73}
\DeclareMathAccent{\widevec}{0}{mathx}{"74}
\DeclareMathAccent{\widehat}{0}{mathx}{"70}
\DeclareMathAccent{\widefrown}{0}{mathx}{"75}
\DeclareMathAccent{\chinesehat}{0}{mathx}{"69}
\newenvironment{smatrix}{\bigl(\begin{smallmatrix}}{\end{smallmatrix}\bigr)}
\def\sign{\varepsilon}
\def\Res{{\,\rm Res}}
\def\Re{{\rm Re}}
\def\Im{{\rm Im}}
\def\Id{{\rm Id}}
\def\ii{{\rm i}}
\def\tr{{\rm tr}}
\def\End{{\rm End}}
\def\SL{{\rm SL}}
\def\SU{{\rm SU}}
\newcommand{\Li}{\operatorname{Li}}
\renewcommand{\and}{\quad\text{and}\quad}
\newcommand{\with}{\quad\text{with}\quad}
\newcommand{\dbar}{{\bar\partial}}
\newcommand{\del}{{\partial}}
\renewcommand{\matrix}[1]{\left(\begin{array}{cc} #1\end{array}\right)}
\newcommand{\cv}[1]{\underline{#1}}
\newcommand{\wtcv}[1]{\widetilde{\underline{#1}}}
\title[Loop group methods for the non-abelian Hodge correspondence]{Loop group methods for the non-abelian Hodge correspondence on a 4-punctured sphere}
 \author{Lynn Heller}
 \author{Sebastian Heller}
 \author{Martin Traizet}
\address{Beijing Institute of Mathematical Sciences and Applications\\
Beijing, China} 
\email{lynn@bimsa.cn}
\address{Beijing Institute of Mathematical Sciences and Applications\\
Beijing, China} 
\email{sheller@bimsa.cn}
\address{Institut Denis Poisson, CNRS UMR 7350 \\
Facult\'e des Sciences et Techniques \\
Universit\'e de Tours\\France }
\email{martin.traizet@univ-tours.fr }
\begin{document}

\begin{abstract}
The non-abelian Hodge correspondence   
is a real analytic map between the moduli space of stable Higgs bundles and the deRham moduli space of irreducible flat connections mediated by solutions to the self-duality equations. In this paper we construct self-duality solutions 
for strongly parabolic $\mathfrak{sl}(2,\C)$ Higgs fields on a $4$-punctured sphere with parabolic weights
 $t \sim 0$ using complex analytic methods. 
We identify  the rescaled limit hyper-K\"ahler
moduli space $\mathcal M_t$ at $t=0$ to be the completion of the nilpotent orbit in $\mathfrak{sl}(2, \C)$ modulo a $\Z_2\times\Z_2$ action, equipped with the Eguchi-Hanson metric. 
Our methods and computations are based on the twistor approach to the self-duality equations
using Deligne and Simpson's  $\lambda$-connections interpretation.
By construction we can compute the Taylor expansions of the holomorphic symplectic form $\varpi_t$ on $\mathcal M_t$ at $t=0$ which turn out to have closed form expressions in terms of multiple polylogarithms (MPLs). The geometric properties of $\mathcal M_t$ lead to some identities of certain MPLs which we believe deserve further investigations.
\end{abstract}
\thanks{\\We thank Philip Boalch, Andy Neitzke and Hartmut Wei{\ss} for helpful comments. Moreover, we would like to thank the anonymous referee for a very thorough report which greatly helped to improve the presentation.\\
LH and SH have been supported by the  {\em Deutsche Forschungsgemeinschaft} within the priority program {\em Geometry at Infinity} and by the Beijing Natural Science Foundation IS23002 (LH) and IS23003 (SH).\\MT is supported by the French ANR project Min-Max (ANR-19-CE40-0014).\\
On behalf of all authors, the corresponding author states that there is no conflict of interest and a supplementary Mathematica file can be found under the web address \href{https://www.idpoisson.fr/traizet/}{https://www.idpoisson.fr/traizet/}}
\setcounter{tocdepth}{1}
\maketitle
\tableofcontents

\section{Introduction}

Hitchin's  self-duality equations \cite{Hi1} on a degree zero and rank two hermitian vector bundle $V \rightarrow \Sigma$ over a compact Riemann surface $\Sigma$ are equations on a  pair $(\nabla, \Psi)$ consisting of
a special unitary connection $\nabla$ and an (trace free) endomorphism valued $(1,0)$-form $\Psi$ satisfying
\begin{equation}\label{SD}\dbar^\nabla \Psi = 0 \quad \text{ and }\quad F^\nabla + [\Psi, \Psi^*] = 0.\end{equation}
These equations are equivalent to the flatness of the whole associated family of connections

\begin{equation}\label{nabla-lambda}\nabla^\lambda  = \nabla + \lambda^{-1} \Psi + \lambda \Psi^*\end{equation}
parametrized by $\lambda \in \C^*$ -- the spectral parameter. Solutions to \eqref{SD} give rise to equivariant harmonic maps from the Riemann surface into the hyperbolic 3-space  SL$(2, \C)/ \SU(2)$ by considering the Higgs field $\Psi$ as the $(1,0)$-part of the differential of the harmonic map \cite{Do}. An application of a  classical result by Eells-Sampson \cite{ES} then shows the existence of a unique solution in each homotopy class of equivariant maps, i.e., when prescribing the (totally reducible) monodromy of the harmonic map \cite{Do}. 

As first recognized by Hitchin \cite{Hi1}, the moduli space of solutions to the self-duality equations (modulo gauge transformations) $\mathcal M_{SD}$ has two very different incarnations as complex analytic spaces. Firstly as the moduli space of Higgs bundles $\mathcal M_{Higgs},$ i.e., as the moduli space of polystable pairs $(\dbar_V, \Psi)$
satisfying $\dbar_V\Psi=0$, and secondly as the moduli space of totally reducible flat connections $\mathcal M_{dR}$. 
The non-abelian Hodge correspondence is the map between $\mathcal M_{Higgs}$ and 
$\mathcal M_{dR}$ obtained through solutions to Hitchin's equations. Both complex analytic spaces $\mathcal M_{Higgs}$ and $\mathcal M_{dR}$ induce anti-commuting complex structures on $\mathcal M_{SD}$ turning it into a hyper-K\"ahler manifold
when equipped with its natural $L^2$-metric. Since the  mapping is mediated by the harmonic map this correspondence is not explicit and it is not possible to see every facet of the geometry within one framework only. 

Every hyper-K\"ahler manifold can be described using complex analytic data (subject to additional reality conditions) via twistor theory \cite{HKLR}. For $\mathcal M_{SD}$, the twistor space has been identified with the so-called Deligne-Hitchin moduli space $\mathcal M_{DH}$
 introduced by Deligne and Simpson \cite{Si}. The space  $\mathcal M_{DH}$ is obtained by gluing the moduli space of $\lambda$-connections on the Riemann surface with the moduli space of $\lambda$-connections on the complex conjugate Riemann surface, and the
associated family of flat connections \eqref{nabla-lambda} naturally extends to a special real holomorphic section -- a twistor line. The main subject of the paper is to construct  twistor lines and the hyper-K\"ahler structure of $\mathcal M_{SD}$
{\em `entirely complex analytically, so we bypass the nonlinear elliptic theory necessary to define the harmonic metrics' \footnote{C. Simpson in \cite[Section 4]{Si}}.}

The self-duality equations \eqref{SD} generalize to punctured Riemann surfaces by imposing
 first order poles of the Higgs fields and a growth condition of the harmonic metric  determined by their parabolic weights, called tameness, see Simpson \cite{Si1}. 
 In the following, we 
 restrict to {\em strongly} parabolic case, where the Higgs fields have nilpotent residues adapted to the parabolic filtration.
Then the associated family of flat connections has (up to conjugation)
the same local monodromy at each puncture 
 \cite[table on page 720]{Si1} for all spectral parameters $\lambda\in\C^*$.
The moduli space of solutions has again a hyper-K\"ahler structure which was first studied by Konno \cite{Konno}. The complex structure $I$ denotes hereby the one of the moduli space of (polystable) strongly parabolic Higgs fields, and the complex structure $J$ denotes that of the moduli space of logarithmic connections with prescribed conjugacy class of their local monodromy.

In this paper, we study the simplest non-trivial case, where the underlying Riemann surface is a 4-punctured sphere and restrict to Fuchsian systems, i.e., logarithmic connections  on the trivial holomorphic (rank 2) bundle. On the Higgs side,
 we assume that the parabolic Higgs bundles are strongly parabolic
 with the same parabolic weights $\pm t$ with $t\in(0,\tfrac{1}{4})$ at each of the four singular points. The moduli space of self-duality equations is then denoted by $\mathcal M_t.$
For $t\rightarrow0$ the space of  polystable parabolic Higgs pairs bounded by $tC$ (for some fixed $C>0$) degenerates to the point given by the trivial parabolic Higgs pair.  When rescaling by $t$ its blow-up limit $\tfrac{1}{t}\mathcal M_t$ at $t=0$ is given by  the moduli space of parabolic Higgs fields $\Phi$
on the trivial holomorphic bundle. Using an implicit theorem argument we construct for $t \sim 0$ twistor lines 
in the (parabolic) Deligne-Hitchin moduli space near this singular limit using complex analytic methods, i.e., we construct solutions for which the Higgs field $\Psi \sim t\Phi$ is small enough. The length of $\Phi$ is measured using its $L^2$-norm, 
or equivalently by the
 energy $\mathcal E(\nabla, \Phi)$ of the equivariant harmonic map corresponding to the Higgs pair.

\begin{theorem}\label{construction}
For $t\sim 0$ fixed let $C>0$ and consider Higgs fields with $\mathcal E(d, \Phi) <C.$ Then, there exists $\varepsilon >0,$ depending only on the constant $C$, such that 
twistor lines $s_t$ can be constructed with
$s_t(\lambda = 0) = t \Phi$
using complex analytic methods only. 
In particular, the method allows to explicitly compute the Taylor expansions of the twistor lines in $t$.
\end{theorem}
In fact, we find two different families of real holomorphic sections $s^\pm_t$ of the Deligne-Hitchin moduli space for a given initial Higgs field $\Phi.$ Only one of them $s^+_t$ turn out to be twistor lines, while the other family $s^-_t$ corresponds to higher solutions as introduced in \cite{HH}. By construction Theorem \ref{construction} also allows us to study the Taylor expansions of the geometric structures on $\mathcal M_t$, such as its hyper-K\"ahler structure, explicitly.  
In particular,  the (rescaled) limit metric for $t\rightarrow 0$ can be identified to be the Eguchi-Hanson metric $g_{EH}$ modulo a $\mathbb Z_2\times\mathbb Z_2$-action. For rational $t = \tfrac{l}{2k}$, we can take a suitable $k$-fold cover $\Sigma_k$ of the four punctured sphere (depending on the value of $t$), such that the (pull-back of) logarithmic connections on the sphere are gauge equivalent to symmetric (smooth) connections on the compact Riemann surface $\Sigma_k$ of genus $k-1 = O(\tfrac{1}{t})$. The moduli space of equivariant solutions to the self-duality equations $\mathcal M^l_{SD}$ on $\Sigma_k$ then converge
(on every compact subset) to a corresponding compact subset of the Eguchi-Hanson space modulo the $\mathbb Z_2\times\mathbb Z_2$-action as $k\rightarrow \infty$. More precisely, 

\begin{theorem}\label{thm:compactcon}
Let $C>0$ and $l\in\N^{>0}$ be fixed.  Consider the compact subspaces 
\[\mathcal C^l_k:=\{[\nabla,\Phi]\in \mathcal M^l_{SD}(\Sigma_k)\mid \mathcal E(\nabla,\Phi)\leq C\}\]
 of $\mathcal M^l_{SD}$ with the induced hyper-K\"ahler metric $g_k$.
Then for $k \rightarrow \infty$, we obtain smooth convergence
\[(\mathcal C^l_k,g_k)\longrightarrow (\{v\in T^*\C P^1\mid \mathcal E_{EH}(v)\leq C\}, 32\pi\,l\, g_{EH})/(\Z_2\times\Z_2).\]
\end{theorem}
The main advantage of the approach is that the higher order terms of the metric expansion are also accessible. We have
 \begin{theorem}\label{mainT}
The hyper-K\"ahler metric of the moduli space $\mathcal M_t$ of strongly parabolic Higgs bundles on the 4-punctured sphere is real analytic in its weight $t$ and there exist an explicit algorithm to computing its Taylor expansion at the trivial parabolic Higgs pair in $t=0$. More precisely, when computing the Taylor expansion of the twistor lines in $t$ at the trivial connection, the $n$-th order coefficients are polynomials in $\lambda$ of degree $n+1$ and can be expressed explicitly in terms of multiple polylogarithms of depth and weight at most $n+1$.
\end{theorem}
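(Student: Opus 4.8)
The strategy is to run the implicit function theorem of Theorem~\ref{thm:IFT} as a generator of Taylor coefficients and to propagate two structural invariants through the induction: the polynomial degree in $\lambda$, and membership in the multiple-polylogarithm algebra. Real analyticity is essentially built into the construction. The Fuchsian ansatz of Section~\ref{Ansatz} yields an analytic equation $F(\mathbf p,t)=0$ between Banach spaces of $\lambda$-loops whose parameter-linearization $D_{\mathbf p}F$ is an isomorphism at the solution, this being exactly the hypothesis under which Theorem~\ref{thm:IFT} applies. Hence the solution $\mathbf p(t)$ is real analytic in $t$, and so are the resulting twistor sections and the twisted holomorphic symplectic form from which the hyper-K\"ahler metric is extracted in the proof of Theorem~\ref{limitEH}. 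Since that metric is recovered by algebraic operations (products, $z$- and $\lambda$-differentiation, and residues) from $\mathbf p(t)$, it too is real analytic in $t$, which gives the first assertion.

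For the algorithm I would differentiate $F(\mathbf p(t),t)=0$ repeatedly at $t=0$. Writing $\mathbf p^{(n)}=\partial_t^n\mathbf p|_{t=0}$, the $n$-th derivative yields a linear equation $D_{\mathbf p}F\cdot\mathbf p^{(n)}=R_n$, where $R_n$ is a universal polynomial in $\mathbf p^{(0)},\dots,\mathbf p^{(n-1)}$ and the explicit partial $t$-derivatives of $F$. As $D_{\mathbf p}F$ is invertible with inverse computable from the $t=0$ data, this recursion determines $\mathbf p^{(n)}$ from the lower-order terms and is the asserted explicit algorithm. Its base case is supplied by Theorem~\ref{blowuplimit}: at $t=0$ the Higgs field vanishes, $\mathbf p^{(0)}$ is the datum of a flat unitary connection, and the corresponding twistor line is polynomial of degree one in $\lambda$.

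The degree bound is then an induction on $n$ using the Lax-pair type equation for the deformation noted in Section~\ref{IFT}. Each $R_n$ is assembled from products and commutators of the $\mathbf p^{(k)}$, $k<n$, together with the inhomogeneity from $\partial_t F$; tracking $\lambda$-degrees through this Lax structure shows that one differentiation raises the degree by exactly one, so $\mathbf p^{(n)}$ is polynomial in $\lambda$ of degree $n+1$, exactly as for the sphere in \cite{HHT2}. The appearance of multiple polylogarithms comes from the reality and monodromy content of $F$: solving for $\mathbf p^{(n)}$ requires the order-$n$ term of the fundamental solution of the Fuchsian system $\tfrac{dy}{dz}=\bigl(\sum_i \tfrac{A_i(\lambda,t)}{z-z_i}\bigr)y$, which is a Chen iterated integral of the logarithmic forms $\tfrac{dz}{z-z_i}$ in the spirit of the Knizhnik--Zamolodchikov connection and the Drinfeld associator. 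With the four punctures normalized to $\{0,1,\infty\}$ and the cross-ratio, these iterated integrals are multiple polylogarithms; each of the (at most $n+1$) integrations raises the weight, and hence bounds the depth, by one, giving weight and depth at most $n+1$. Since the multiple-polylogarithm algebra is closed under the shuffle product, the algebraic operations extracting the metric from $\mathbf p(t)$ preserve this class together with these bounds.

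The main obstacle is the degree bookkeeping combined with control of the iterated-integral class. One must verify that the \emph{inverse} $D_{\mathbf p}F^{-1}$, not merely $R_n$, respects the filtration by $\lambda$-degree, introducing neither spurious negative powers nor a jump by more than one; this forces a careful analysis of the linearized operator and of the reality and normalization conditions of the Fuchsian ansatz. Equally delicate is keeping the antiderivatives inside the multiple-polylogarithm algebra rather than a larger ring of iterated integrals: this relies on the singular locus remaining the fixed set $\{0,1,\infty,\text{cross-ratio}\}$ at every order, so that no new pole is created along the recursion, together with the shuffle regularization needed to handle the tangential base points at the punctures.
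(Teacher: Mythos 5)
Your overall strategy coincides with the paper's: real analyticity comes from the implicit function theorem, the Taylor coefficients are generated by repeatedly differentiating the monodromy equations at $t=0$, and the multiple polylogarithms enter because the $n$-th derivative of the fundamental solution is a sum of iterated integrals $\Omega_{i_1,\dots,i_\ell}$ of the logarithmic forms $\omega_i$, of length at most $n$, which are MPLs of the corresponding depth and weight (this is exactly how the paper's Proposition~\ref{lastPro} proceeds, using the expansion of $\mathcal P^{(n+1)}$, $\mathcal Q^{(n+1)}$ and the results of \cite{HHT2}). Your identification of the base case with the first-order computation is also consistent with Section~\ref{firstderivative}.

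However, there is a genuine gap precisely at the point you yourself flag as ``the main obstacle'': you never show that inverting the linearized problem preserves the $\lambda$-degree filtration, and the argument you offer for the degree bound --- tracking degrees through the Lax-pair equation $A'=[A,X]$ --- does not do the job. That remark in Section~\ref{IFT} concerns only the structure of the first-order deformation and plays no role in the degree bookkeeping; moreover an abstract inversion of $D_{\mathbf p}F$ in the loop Banach space $(\mathcal W_a^{\geq 0})^3$ a priori produces elements with infinite $\lambda$-expansions, so polynomiality of $x_i^{(n)}$ is exactly what must be proved, not an input. The paper closes this by exploiting the specific splitting of the equations: differentiating $\mathfrak p=\mathfrak p^*$ and $\mathfrak q=\mathfrak q^*$ gives $(x_3^{(n)})^+$ and $(x_2^{(n)})^+$ \emph{in closed form} as $\frac{1}{2\pi(n+1)}\bigl((\low{\mathfrak p}^{(n+1)})^{-*}-(\low{\mathfrak p}^{(n+1)})^+\bigr)$ (and similarly for $\mathfrak q$), which are visibly polynomials of degree at most $n+1$ by the induction hypothesis; then differentiating $\mathcal K=1$ and multiplying by $\lambda$ yields $P_1x_1^{(n)}+P_2x_{2,0}^{(n)}+P_3x_{3,0}^{(n)}=(\text{known polynomial of degree}\leq n+3)$, and the key point is that $x_1^{(n)}\in\mathcal W_a^{\geq 0}$ has no poles in $\D_a$ while $P_1$ has both roots in $\D_a$ when $u\sim v$, which forces $P_1$ to divide the right-hand side, so $x_1^{(n)}$ is a polynomial of degree at most $n+1$ (extended to all $(u,v)$ by analyticity); finally the constant terms $x_{i,0}^{(n)}$ solve a $3\times 3$ complex linear system with explicit nonzero determinant $\tfrac{i}{2}\rho r^6$. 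Without this (or an equivalent) mechanism your induction does not close. A minor additional point: your appeal to shuffle regularization and tangential base points is unnecessary here, since the base point $z_0=0$ and the evaluation points $z=1$, $z=i$ are regular points of the Fuchsian system, so all iterated integrals $\Omega_{i_1,\dots,i_\ell}(1)$, $\Omega_{i_1,\dots,i_\ell}(i)$ are honest (unregularized) iterated integrals.
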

Also the second family of solutions yields a hyper-K\"ahler metric, the $n$-th order coefficients of the two metric expansions differ only by $(-1)^n$. 

It would be interesting to compare our results to those of \cite{FMSW},
where the moduli space is studied for fixed weights and large Higgs fields in the regular locus, as well as in the context of current developments on hyper-K\"ahler 4-manifolds (see e.g. \cite{CC1,CC2,CC3}), and in view of the different ways of obtaining hyper-K\"ahler structures on moduli spaces (see e.g. \cite{Boa} and the references therein).\\

The paper is organized as follows. In Section \ref{pre} we introduce Higgs bundles, the associated moduli spaces as well as its hyper-K\"ahler structure for compact Riemann surfaces. Their complex geometric properties are encoded in the Deligne-Hitchin moduli space, see \cite{Si}, and we discuss twistor lines as special real holomorphic sections induced by self-duality solutions. Furthermore, we identify the twisted holomorphic symplectic structure on the  Deligne-Hitchin moduli space with (a version of) Goldman's symplectic structure.
We extend this discussion to the case of punctured surfaces and strongly parabolic Higgs pairs in Section \ref{sec:paradh}, and
explain the relationship between the compact and punctured case for rational weights.
Thereafter, we give the ansatz in terms of Fuchsian systems for constructing real holomorphic sections through an implicit function theorem argument and loop group methods in Section \ref{Ansatz} and prove Theorem \ref{construction} in Section \ref{IFT}.  To find appropriate coordinates, we first consider the 4-fold covering of the Higgs bundles moduli space $\mathcal M_{Higgs}$ and we rescale it by  by the factor $\frac{1}{t}$. The limit space at $t=0$ is then given
by the blow-up of $\C^2/\Z_2$ (with coordinates $(u,v)$) at the origin. To perform the implicit theorem argument, we thus first consider the regular case in Theorem \ref{thm:IFT}  and then when $(u, v) \rightarrow (0,0)$ in Theorem \ref{blowuplimit}.
 It turns out that in the appropriate $(u, v) \rightarrow (0,0)$ limit the Higgs field vanishes, and the corresponding sections $s^+_0$
are flat unitary connections, which are twistor lines. Due to the fact that twistor lines form a connected component of the space of real holomorphic sections, all constructed sections must be twistor lines as well. Interestingly, we find a Lax pair type equation describing the deformation given by the implicit function theorem.  In  Section \ref{NAHCt=0}, we  compute the limit non-abelian Hodge correspondence at $t=0$, and identify the
  rescaled limit hyper-K\"ahler metric to be Eguchi-Hanson.  One advantage of our construction is that we can compute Taylor expansion of all geometric quantities. As an example, we  compute first order derivatives of the parameters in Section \ref{firstderivative} and put them together to obtain the first order expansion of the non-abelian Hodge correspondence, the twisted holomorphic symplectic structure and the hyper-K\"ahler metric.  In the last Section \ref{hod} we analyse the structure of the higher order derivatives of the twistor
sections with respect to $t$. As in \cite{HHT3} for minimal surfaces in the 3-sphere, the $n$-th order derivatives of the parameters are polynomial in $\lambda$ of order $(n+1)$. Since the twisted holomorphic symplectic form is a Laurent polynomial of degree one in $\lambda$, evaluating it on
meromorphic 1-forms which represent tangent vectors to $\mathcal M(t)$ yields infinitely many cancelations for higher order terms in $\lambda$ leading to identities for some iterated integrals, called {\em $\Omega$-values},  which can be expressed
in
terms of multiple polylogarithms.  In Section \ref{nOid} we give some identities of depth three $\Omega$-values obtained from this idea which are non-trivial in the sense that they cannot be derived from the shuffle and stuffle relations alone.

\section{Preliminaries}\label{pre}

In this preliminary section, we recall basic results about the moduli spaces of solutions to Hitchin's self-duality equations with a focus on the twistorial
description of its hyper-K\"ahler structure. Unless stated otherwise, the underlying Riemann surface $\Sigma$ is assumed  to be compact throughout the section.

\subsection{The hyper-K\"ahler structures and their twistor spaces}
The moduli space of irreducible solutions of Hitchin's self-duality equations $\mathcal M_{SD}$ has through the non-abelian Hodge correspondence three complex structures $I,J,K= I J$ which are K\"ahler with respect to the same Riemannian metric $g$. In fact there exist a whole $\C P^1$ worth of complex structures defined by  

$$I_\lambda = \frac{1-|\lambda|^2}{1+|\lambda|^2}I+\frac{\lambda + \bar \lambda}{1+|\lambda|^2}J + \frac{i(\lambda - \bar \lambda)}{1+|\lambda|^2} K$$

for $\lambda \in \C \subset \C P^1.$
Within this family we find the complex structure $I$ at $\lambda = 0$, and the complex structure $J$ at $\lambda = 1$.
Let
$$\omega_I = - g(., I .) \quad \omega_J = - g(., J.) \and \omega_K = - g(., K .)$$
be the associated K\"ahler forms so that $h= g+ i\omega_L$ is the corresponding hermitian metric for $L\in\{I,J,K\}$. The twistor space of a hyper-K\"ahler manifold $\mathcal M$, introduced in \cite{HKLR}, is the smooth manifold$$\mathcal P := \mathcal M \times \C P^1$$
equipped with the (integrable) complex structure
 $$\mathbb I = \left (I_\lambda,  \ii \right),$$
 at the point $x \in \mathcal M $ and $\lambda \in \C \subset \C P^1$,
where $\ii$ is the standard complex structure of $\C P^1.$ 
Furthermore, the twistor space  has a natural anti-holomorphic involution $\mathcal T$ given by $(x,\lambda)\mapsto(x,-\bar\lambda^{-1})$.
By construction the twistor space has a holomorphic projection $\pi$ to $\C P^1$ and a twisted relative holomorphic symplectic form given by

\[\wh\varpi\in H^0(P,\Lambda^2V^*\otimes\mathcal O(2))\]
where
$V=\ker d\pi$ (is the complex tangent bundle to the fibers)
 and $\mathcal O(2)$ denotes the dual of the pull-back of the canonical bundle over $\C P^1.$
In terms of the K\"ahler forms the twisted relative holomorphic symplectic form has the following explicit expression
\begin{equation}\label{deftwistedOmega}\wh\varpi=\varpi\otimes\lambda\frac{\partial}{\partial\lambda},\end{equation}
where 
\begin{equation}\label{deftwistedOmega2}\varpi=\lambda^{-1}(\omega_J+i\omega_K)-2\omega_I-\lambda(\omega_J-i\omega_K),\end{equation}
see \cite[Equation (3.87)]{HKLR} or \eqref{eq:omlam} below. Sections of $\mathcal P$ are holomorphic maps 
$s \colon \C P^1 \rightarrow \mathcal P$ with $\pi \circ s = \Id.$ 

When restricting to $\mathcal M = \mathcal M_{SD},$ the simplest sections are {\em constant} sections
\begin{equation}\label{twistorPline}\lambda \mapsto ([\nabla, \Psi], \lambda)\end{equation}
for a solution of the self-duality equations $(\nabla, \Psi)$. By dimension count, these twistor lines give rise to an open subspace of the space of real holomorphic sections \cite{HKLR}.  The following well known theorem then follows  from the completeness of the moduli space of self-duality solutions \cite{Hi1}. 
\begin{theorem} \label{connectedcomponent}
Twistor lines form  an open and closed subset of the space of real holomorphic sections.
\end{theorem}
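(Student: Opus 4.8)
The plan is to realize the twistor lines as the image of a natural map from $\mathcal M_{SD}$ into the space $\mathcal R$ of real holomorphic sections of $\pi\colon\mathcal P\to\C P^1$ (those $s$ with $\mathcal T\circ s=s\circ\sigma$, where $\sigma(\lambda)=-\bar\lambda^{-1}$ is the antipodal map), and then to show that this map is simultaneously open and proper onto its image. Concretely, \eqref{twistorPline} defines
\[\Phi\colon\mathcal M_{SD}\longrightarrow\mathcal R,\qquad x\longmapsto\bigl(\lambda\mapsto(x,\lambda)\bigr),\]
and one checks directly from $\mathcal T(x,\lambda)=(x,-\bar\lambda^{-1})$ that each constant section is indeed real, so $\Phi$ is well defined; it then remains to prove that its image is both open and closed in $\mathcal R$.

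For openness I would run the deformation-theoretic argument of \cite{HKLR}. Its key input is the identification of the normal bundle of a twistor line $s=\Phi(x)$, namely $N_s\cong\mathcal O(1)^{\oplus 2n}$ with $2n=\dim_\C\mathcal M_{SD}$. Because $H^1(\C P^1,N_s)=0$, Kodaira's theorem shows the deformations are unobstructed and that $\mathcal R$ is, near $s$, a smooth manifold of real dimension $\dim_\R H^0(\C P^1,N_s)^{\mathcal T}=4n=\dim_\R\mathcal M_{SD}$. The differential $d\Phi_x$ is precisely the canonical identification of $T_x\mathcal M_{SD}$ with the real sections of $N_s$ and is therefore injective; since domain and target have the same dimension, $\Phi$ is a local diffeomorphism and its image is open.

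Closedness is where completeness enters, and I would deduce it from properness of $\Phi$. Given a sequence of twistor lines $\Phi(x_k)$ converging in $\mathcal R$, evaluation at a fixed spectral parameter (for instance $\lambda=0$, the Higgs-bundle point) converts this convergence into boundedness of the $x_k$ for the $L^2$-metric $g$. By Hitchin's completeness of $(\mathcal M_{SD},g)$ -- valid once the reducible solutions that close up the metric are adjoined \cite{Hi1} -- the $x_k$ subconverge to some $x_\infty$, and continuity of $\Phi$ forces the limiting section to equal $\Phi(x_\infty)$, again a twistor line. Hence $\Phi(\mathcal M_{SD})$ is closed, and being both open and closed it is a union of connected components of $\mathcal R$.

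The main obstacle is precisely this closedness step: one must rule out that a convergent family of twistor lines degenerates, either through the underlying solutions escaping to infinity or collapsing to a non-stable, reducible limit lying outside $\mathcal M_{SD}$. Controlling this is exactly the role of completeness of the hyper-K\"ahler metric, so the crux is to translate convergence in the section space $\mathcal R$ into metric boundedness of the parameters $x_k$ and then invoke completeness; by contrast, once the normal bundle is known to be $\mathcal O(1)^{\oplus 2n}$, the openness half is routine.
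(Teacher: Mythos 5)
Your proposal follows essentially the same route as the paper, which likewise establishes openness by the dimension count for real sections with normal bundle $\mathcal O(1)^{\oplus 2n}$ from \cite{HKLR} and deduces closedness from completeness of the moduli space once reducible solutions are adjoined, citing \cite{Hi1}. The paper states these two ingredients without elaboration, so your write-up is simply a fleshed-out version of its argument.
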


A convenient   set-up for studying associated families of flat connections obtained from solutions to self-duality equations is to consider them as real sections of the Deligne-Hitchin moduli space $\mathcal M_{DH}$. 
 
\subsection{The Deligne-Hitchin moduli space $\mathcal M_{DH}$}\label{DH}
The Deligne-Hitchin moduli space was first introduced by
Deligne (see \cite{Si, Si2}) as a complex analytic way of viewing the associated twistor space  of the moduli space of solutions to the self-duality equations. As such it interpolates between the moduli space of Higgs bundles $\mathcal M_{Higgs}$ and the moduli space of flat connections $\mathcal M_{dR}$.

\begin{definition}
Let $\Sigma$ be a compact Riemann surface and $\lambda\in\C$ fixed.
A (integrable) $\lambda$-connection on a $\mathcal C^\infty$-complex vector bundle $V\to \Sigma$ is a pair $(\dbar, D)$ consisting of a holomorphic structure $\dbar$ on $V$ and a linear first order differential operator
\[D\colon\Gamma(\Sigma,V)\to\Omega^{(1,0)}(\Sigma,V )\]
satisfying the $\lambda$-Leibniz rule
\[D(fs)=\lambda\partial f\otimes s+f Ds,\]
where $\partial$ is the trivial $\partial$-operator for functions $f$ and $s$ is a section of $V$, and the integrability condition
\begin{equation}\label{intcond}
D\dbar+\dbar D=0.\end{equation}
\end{definition}

\begin{remark}
The operators  $D$ and $\dbar$ also act on $(0,1)$-forms and $(1,0)$-forms respectively. 
For $\lambda = 0$ the integrability condition \eqref{intcond} is equivalent to
 \[D=\Psi\in H^0(M, K_\Sigma \End(V))\]
being a holomorphic endomorphism-valued 1-form, and for $\lambda \neq 0$ we have that 
\[\nabla=\tfrac{1}{\lambda}D+\dbar\]
is a flat connection. \end{remark}

\begin{example}
Consider on the hermitian  bundle $V \to\Sigma $ a solution $(\nabla=\del^\nabla+\dbar^\nabla, \Psi)$
of the self-duality equations.
 Then, the pair
\[(\dbar^\nabla + \lambda \Psi^*, \lambda \del^\nabla + \Psi) \]
defines a $\lambda$-connection on $V$ for every $\lambda \in \C$ which coincides with the Higgs pair  $(\dbar^\nabla, \Psi)$ at $\lambda = 0$ and with the flat connection $ \nabla^1 = \nabla 
+ \Psi + \Psi^*$ at $\lambda = 1.$
\end{example}

\begin{definition}\label{def:sllambda}
A $\SL (2, \C)$ $\lambda$-connection  is a $\lambda$-connection on a rank 2 vector bundle $V\rightarrow \Sigma$, such that
the induced $\lambda$-connection on the determinant bundle $\Lambda^2 V$ is trivial.\end{definition}

\begin{definition}
A $\SL(2, \C)$ $\lambda$-connection $(\dbar, D)$ is called stable, if
every $\dbar$-holomorphic subbundle $L\subset V$  with
\[D(\Gamma(\Sigma,L))\subset\Omega^{(1,0)}(\Sigma,L)\]
is of negative degree and semi-stable if its degree is non-positive.
All other $\lambda$-connections are called unstable. A $\SL(2,\C)$ $\lambda$-connection is called polystable if it is either stable or the direct sum
of dual $\lambda$-connections on degree zero line bundles.
\end{definition}

For $\lambda\neq0$,  every $\dbar$-holomorphic and $D$-invariant line subbundle $L\subset V$ must be parallel with respect to the flat connection $\nabla=\tfrac{1}{\lambda}D+\dbar$. Therefore, the degree of $L$ is $0$ and the $\lambda$-connection  $(\dbar, D)$  is semi-stable. 
Moreover, $(\dbar, D)$ is stable if and only if the flat connection $\nabla=\tfrac{1}{\lambda}D+\dbar$ is irreducible. The situation is different at $\lambda = 0$ and we need to  restrict to polystable
$\lambda$-connections to obtain a well-behaved moduli space.

\begin{definition}
The Hodge moduli space $\mathcal M_{Hod}=\mathcal M_{Hod}(\Sigma)$ is the space of all polystable $\SL(2,\C)$ $\lambda$-connections on $V=\Sigma\times\C^2\to \Sigma$ modulo gauge transformations, i.e., 
 $\mathcal M_{Hod}$ consists of gauge classes of  triples $(\lambda,\dbar,D)$ for $\lambda \in \C$ and $(\dbar,D)$ a polystable $\lambda$-connection. The gauge-equivalence class of $(\lambda,\dbar,D)$ is denoted by $$[\lambda,\dbar,D]\in\mathcal M_{Hod}$$ or by
\[[\lambda,\dbar,D]_\Sigma \in\mathcal M_{Hod}( \Sigma)\]
to emphasis its dependence on the Riemann surface.
\end{definition}

The Hodge moduli space admits a natural holomorphic map 
\[\pi_\Sigma\colon\mathcal M_{Hod}\longrightarrow \C; \quad [\lambda,\dbar,D] \longmapsto\lambda\] whose fiber at
$\lambda=0$ is the (polystable) Higgs moduli space $\mathcal M_{Higgs}$, and at $\lambda=1$ it is the deRham moduli space of flat (and totally reducible) $\SL(2,\C)$-connections $\mathcal M_{dR}$. We consider both spaces as complex analytic spaces endowed with their natural complex structures $I$ and $J,$ respectively. 

The next step is then to compactify the $\lambda$-plane $\C$ to $\C P^1.$
For a given Riemann surface $\Sigma$ let $\overline \Sigma$ be the Riemann surface with conjugate complex structure. As differentiable manifolds we have $\Sigma\cong\overline \Sigma$ and thus their deRham moduli spaces of flat $\SL(2,\C)$-connections
are naturally isomorphic. Then the two Hodge moduli spaces $\mathcal M_{Hod}(\Sigma)$ and $\mathcal M_{Hod}(\overline \Sigma)$ can be the glued together via Deligne gluing \cite{Si2} 

\[\mathcal G \colon\mathcal M_{Hod}(\Sigma)\setminus \pi_\Sigma^{-1}(0)\longrightarrow \mathcal M_{Hod}(\overline \Sigma)\setminus \pi_{\overline \Sigma}^{-1}(0);\quad [\lambda,\dbar,D]_\Sigma\longmapsto[\tfrac{1}{\lambda},\tfrac{1}{\lambda}D,\tfrac{1}{\lambda}\dbar]_{\overline \Sigma}\]
along $\lambda \in \C^*$ to give Deligne-Hitchin moduli space 
\[\mathcal M_{DH}=\mathcal M_{Hod}(\Sigma)\cup_{\mathcal G}\mathcal M_{Hod}(\overline \Sigma).\]

The natural fibration $\pi_\Sigma$ on $\mathcal M_{Hod} $ extends holomorphically to the whole Deligne-Hitchin moduli space to give  $\pi\colon\mathcal M_{DH}\to\C P^1$ whose
restriction to $\mathcal M_{Hod}(\overline \Sigma)$ is $1/\pi_{\overline \Sigma}.$

\begin{remark}\label{smoothMdh}
Note that the Deligne gluing  map $\mathcal G$ maps stable $\lambda$-connections over $\Sigma$ to stable $\tfrac{1}{\lambda}$-connections on $\overline \Sigma.$ Hence, it maps the smooth locus of $\mathcal M_{Hod}(M)$ (consisting of stable $\lambda$-connections) to the smooth locus of $\mathcal M_{Hod}(\overline M)$, and thus $\mathcal M_{DH}$ is equipped
with a complex manifold structure at all stable points. Moreover,  since $\lambda$-connections for $\lambda \in \C^*$ can be reinterpreted as flat connections, we can identify
$$\C^*\times \mathcal M_{dR}(\Sigma) = \mathcal M_{Hod}(\Sigma)\setminus \pi_\Sigma^{-1}(0).$$

\end{remark}

\begin{definition}
A section of $\mathcal M_{DH}$ is a holomorphic map 
$$s: \C P^1 \longrightarrow \mathcal M_{DH}$$
such that $\pi \circ s = $Id. 
\end{definition}
\begin{example}
The associated family of flat connections $\nabla^\lambda$ \eqref{nabla-lambda} to a solution of the self-duality equations  \eqref{SD} gives rise to a section of $\mathcal M_{DH} \rightarrow \C P^1$ via 
\begin{equation}\label{twistorsection}
s(\lambda) =  [\lambda,\dbar^\nabla+\lambda\Psi^*,\lambda \partial^\nabla+\Psi]_\Sigma\in\mathcal M_{Hod}(\Sigma)\subset\mathcal M_{DH}.\end{equation}
 
When identifying the Deligne-Hitchin moduli space with
 the twistor space $\mathcal P\to\C P^1$ of the hyper-K\"ahler space $\mathcal M_{SD}$ (at the smooth points),  the section given by \eqref{twistorsection} is identified with the `constant' twistor line \eqref{twistorPline}, see \cite{Si}.
\end{example}

\begin{definition}
A section $s$ of $\mathcal M_{DH}$ is called stable, if the $\lambda$-connection
$s(\lambda)$ is stable for all $\lambda\in \C^*$  and if the Higgs pairs $s(0)$   on $\Sigma$ and $s(\infty)$ on $\overline{\Sigma}$ are stable.  
\end{definition}
It follows from Hitchin \cite{Hi1} and Donaldson \cite{Do} that every stable point
in $\mathcal M_{DH}$ uniquely determines a twistor line. Therefore, a twistor line $s$ is already stable if $s(\lambda_0)$ is stable for some $\lambda_0\in\C$. Moreover,  twistor lines are in one-to-one correspondence with self-duality solutions \eqref{SD}. The following characterization of twistor lines as particular {\em negative real holomorphic  sections} of $\mathcal M_{DH}$ is useful to decide when certain real sections give rise to global solutions to the self-duality equations. 

\subsection{Automorphisms of the Deligne-Hitchin moduli space}
To define a real structure  on $\mathcal M_{DH}$ we need  to look at some natural automorphisms of  Deligne-Hitchin moduli space first. For every $\mu\in\mathbb C^*$ the (multiplicative) action of $\mu$ on $\C P^1$ has a natural lift
to $\mathcal M_{DH}$ by
\[\mu([\lambda,\dbar,D])=[\mu\lambda,\dbar,\mu D].\]

\begin{definition}
We denote by $N : \mathcal M_{DH} \rightarrow \mathcal M_{DH}$ 
the map given by multiplication with $\mu=-1$, namely
\[[\lambda,\dbar,D] \longmapsto [-\lambda,\dbar, -D].\]
\end{definition}

Furthermore, we have a natural anti-holomorphic automorphism denoted by $C.$ 
\begin{definition}\label{defC2}
Let $C\colon \mathcal M_{DH}\longrightarrow \mathcal M_{DH}$ be (the continuation of) the map
\[\widetilde C: \mathcal M_{Hod}(\Sigma) \longrightarrow \mathcal M_{Hod}(\overline \Sigma)\] given by 
 \begin{equation}\label{defcomcon}[\lambda,\dbar,D] \longmapsto [\bar\lambda,\bar\dbar,\bar D]_{\overline \Sigma}.\end{equation}

To be more concrete, for  
\[\dbar = \dbar_0+\eta \quad \text{and} \quad D = \lambda(\partial_0)+\omega\]
where $d=\partial_0+\dbar_0$ is the trivial connection, $\eta\in\Omega^{0,1}(\Sigma,\mathfrak{sl}(2,\C)),$ and $\omega\in\Omega^{1,0}(\Sigma,\mathfrak{sl}(2,\C)),$ we define
the complex conjugate on the trivial $\C^2$-bundle over $\overline \Sigma$ to be
 \[\bar \dbar = \partial_0+\bar\eta \quad \text{and} \quad \bar D = \bar\lambda(\dbar_0)+\bar\omega.\]
\end{definition}

The map $C$ covers the map 
\[\lambda\in\C P^1\longmapsto \bar\lambda^{-1}\in\C P^1.\]
Since $C$ and $N$ commute and both maps are involutive,  their composition 
\[\mathcal T=CN\] is an involution as well,
which covers the fixed-point free involution
$\lambda\mapsto - \bar\lambda^{-1}$ on $\C P^1.$

\subsection{Real sections}\label{subSrealS}
Consider
the anti-holomorphic involution of the associated Deligne-Hitchin moduli space 
\[\mathcal T=CN\colon\mathcal M_{DH}\longrightarrow\mathcal M_{DH} \]
covering the antipodal involution \[\lambda\longmapsto-\bar\lambda^{-1}\] of $\mathbb CP^1.$ We call a holomorphic section  $s$ of $\mathcal M_{DH}$ real (with respect to $\mathcal T$) if
\begin{equation}\label{taurealdefin}\mathcal T(s(\lambda))=s(-\bar\lambda^{-1})\end{equation}
holds for all $\lambda\in\mathbb CP^1$.
\begin{example}
Twistor lines \eqref{twistorsection} are real holomorphic sections with respect to $\mathcal T$.  
 Let $(\nabla,\Psi)$ be a solution of the self-duality equations on $\Sigma$ with respect to the standard hermitian metric on $\underline \C^2\to \Sigma.$ Because we are dealing with $\mathfrak{sl}(2,\C)$-matrices, the unitary connection $\nabla$ satisfies
  \[\nabla=\nabla^*=\bar\nabla.\begin{pmatrix}0&1\\-1&0\end{pmatrix}\]
  which is equivalent to
  \[\bar\partial^\nabla=\overline{\partial^\nabla}.\begin{pmatrix}0&1\\-1&0\end{pmatrix}\quad\text{and}\quad \partial^\nabla =\overline{\bar\partial^\nabla}.\begin{pmatrix}0&1\\-1&0\end{pmatrix},\]
 and  analogously we have
 \[\Psi=-\begin{pmatrix}0&-1\\1&0\end{pmatrix}\overline{\Psi^*} \begin{pmatrix}0&1\\-1&0\end{pmatrix}\quad\text{and}\quad \Psi^*=-\begin{pmatrix}0&-1\\1&0\end{pmatrix}\bar\Psi \begin{pmatrix}0&1\\-1&0\end{pmatrix}.\]
Therefore, the twistor line \eqref{twistorsection} satisfies
\begin{equation}
\begin{split}
\mathcal T(s(\lambda))&=\mathcal T([\lambda, \bar\partial^\nabla+\lambda\Psi^*,\Psi+\lambda\partial^\nabla]_\Sigma)\\
&=[-\bar\lambda^{-1}, \bar\lambda^{-1}(\overline{\Psi +\lambda\partial^\nabla}),-\bar\lambda^{-1}(\overline{\bar\partial^\nabla+\lambda\Psi^*})]_\Sigma\\
&=[-\bar\lambda^{-1}, \overline{\partial^\nabla}+\bar\lambda^{-1}\overline{\Psi},-\overline{\Psi^*}
-\bar\lambda^{-1}\overline{\bar\partial^\nabla}]_\Sigma\\
&=[(-\bar\lambda^{-1}, \overline{\partial^\nabla}+\bar\lambda^{-1}\overline{\Psi},-\overline{\Psi^*}
-\bar\lambda^{-1}\overline{\bar\partial^\nabla}   ).\begin{pmatrix}0&1\\-1&0\end{pmatrix} ]_\Sigma\\
&=[(-\bar\lambda^{-1}, \bar\partial^\nabla-\bar\lambda^{-1}\Psi^*,\Psi
-\bar\lambda^{-1}\partial^\nabla   )]_\Sigma= s(-\bar\lambda^{-1}).
\end{split}
\end{equation}
\end{example}

Let $s(\lambda)$ be a real holomorphic section with $\nabla^\lambda\sim \lambda^{-1}\Psi+\nabla+\dots$ being a lift to the space of flat connections such that 
$(\bar\partial^\nabla,\Psi)$ is stable. 
For the existence of such lifts see \cite[Lemma 2.2 ] {BHR}.
Then the reality condition \eqref{taurealdefin} gives rise to a holomorphic family of gauge transformations $g(\lambda)$ satisfying 
$$\overline{\nabla^{-\bar\lambda^{-1}}}=  \nabla^\lambda .g(\lambda).$$

Applying this equation twice we obtain 

$$\nabla^\lambda.g(\lambda) \overline{g(-\bar\lambda^{-1})} = \nabla^\lambda.$$

Because the section $s$ is stable, the connections $\nabla^\lambda$ are irreducible for all $\lambda \in \mathbb C^*$. Therefore $g(\lambda) \overline{g(-\bar\lambda^{-1})}$ is a constant multiple of the identity for every $\lambda \in \C^*$.
By \cite[Lemma 1.18]{HH}  we can choose $g$ to be SL$(2, \C)$-valued and thus
\begin{equation}\label{realeqsecsign}
g(\lambda)\overline{g(-\bar\lambda^{-1})}=\pm\text{Id}.\end{equation}
The sign on the right hand side is independent of the lift $\nabla^\lambda$ of $s$ and is preserved in a connected component of real sections motivating the following definition.
\begin{definition}\cite[Definition 2.16]{BHR}
A stable real holomorphic section $s$ of $\mathcal M_{DH}$ is called positive or negative depending on the sign of \eqref{realeqsecsign}.
\end{definition}

\begin{example}\label{exa:negative}
Twistor lines are always negative sections.
In fact, the associated family of flat connections is a canonical lift of the twistor line to the space of flat connections, and with respect to the standard hermitian structure on $\mathbb C^2$ the gauge 
\begin{equation}\label{g=delta}
g(\lambda)=\begin{pmatrix}0&1\\-1&0\end{pmatrix},
\end{equation}
as in \eqref{realeqsecsign}, is constant in $\lambda$ and squares 
 to $-\text{Id}.$
\end{example}

The space of real sections of $\mathcal M_{DH}$ has multiple connected components. A negative real holomorphic section lying in the connected component of the twistor lines must be a twistor line itself by Theorem \ref{connectedcomponent}.  It corresponds therefore to a global solution of the self-duality equation.

\subsection{Loop groups}\label{sec:loops}

We briefly introduce basic notions and statements  about loop groups which are relevant with regard to the paper. For details we refer to \cite{PS} or \cite{DPW}. Define 
\begin{itemize}
\item $\Lambda \mathrm{SL}(2,\C):= \{$ real analytic maps (loops) $\Phi\colon \S^1\longrightarrow \mathrm{SL}(2,\C),\quad \lambda \longmapsto \Phi^\lambda\};$

\item $\Lambda \mathfrak{sl}(2,\C):=\{$ real analytic maps (loops) $\eta\colon \S^1\longrightarrow\mathfrak{sl}(2,\C),\quad  \lambda \longmapsto \eta^\lambda\}.$ 
\end{itemize}
Then, $\Lambda \mathrm{SL}(2,\C)$ is an infinite dimensional Frechet Lie group via pointwise multiplication with $\Lambda \mathfrak{sl}(2,\C)$ as its
Lie algebra. We consider $\S^1=\{\lambda\in\C P^1\mid \lambda\bar\lambda=1\},$ and denote 
\begin{equation}
\begin{split}
\Lambda_+\mathrm{SL}(2,\C)&=\{\Phi\in\Lambda \mathrm{SL}(2,\C)\mid \Phi \text{ extends holomorphically to } \lambda=0\}\\
\Lambda_-\mathrm{SL}(2,\C)&=\{\Phi\in\Lambda \mathrm{SL}(2,\C)\mid \Phi \text{ extends holomorphically to } \lambda=\infty\}
\end{split}\end{equation}
and
\[\Lambda_+\mathfrak{sl}(2,\C)=\{\eta\in\Lambda \mathfrak{sl}(2,\C)\mid \eta \text{ extends holomorphically to } \lambda=0\}.\]
We also denote
\[\Lambda_-^*\mathrm{SL}(2,\C)=\{B\in\Lambda_-\mathrm{SL}(2,\C)\mid B(\infty)=\Id\}.\]

The following classical theorem,  due to Birkhoff, Grothendieck and others, is essential for our method, see \cite{PS} for details and \cite{DPW} for the loop group method for harmonic maps from simply connected 2-dimensional domains into
positively curved symmetric spaces.

\begin{theorem}\label{thm:birk}
There is an open and dense subset $\mathcal U\subset\Lambda \mathrm{SL}(2,\C)$ called the {\em big cell} such that every
$g\in\mathcal U$
admits a  factorization
 $$g=g^+ g^-$$
 with $g^+\in\Lambda_+\mathrm{SL}(2,\C)$ and $g^- \in \Lambda_-^*\mathrm{SL}(2,\C).$
This splitting is unique and depends holomorphically on $g\in\mathcal U.$ The pair $(g^+,g^-)$ is called the Birkhoff factorization 
of $g$.\\
\end{theorem}

\subsection{Reconstruction of self-duality solutions from admissible negative real sections}\label{sec:reconstruction}

A section $s$ of the  Deligne-Hitchin moduli space has lifts to families of flat connections in both Hodge moduli spaces. Let 
$$\wt\nabla^\lambda = \lambda^{-1} \Psi_1 + \nabla + \text{higher order terms in } \lambda$$
 be a lift around $\lambda = 0.$ Due to $s$ being real, there exists a family of $\mathrm{SL}(2,\C)$ gauges $g(\lambda)$ satisfying 
$$\overline{\wt\nabla^{-\bar \lambda^{-1}}} = \wt \nabla^\lambda . g(\lambda)\,.$$

 Assume that  the Birkhoff factorization 
$$g(\lambda) = g^+(\lambda) g^-(\lambda)$$
 exist for every $z \in \Sigma$.

Assume that the section $s$ is negative, i.e, 
\begin{equation}\label{eq:pmgpm1}\overline{g(-\bar\lambda^{-1})}^{-1} =   \overline{g^- (-\bar \lambda^{-1})}^{-1}  \overline{g^+ (-\bar \lambda^{-1})}^{-1} = -g(\lambda).\end{equation}
Since $\lambda \rightarrow -\bar \lambda^{-1}$ interchanges the $()^+$ and $()^-$ parts of the Birkhoff factorization, the uniqueness assertion gives
\begin{equation}\label{eq:pmgpm2}g^+(\lambda) = -\overline{g^- (-\bar \lambda^{-1})}^{-1} B^{-1} \quad \text{ and } \quad  g^-(\lambda) = B \overline{g^+ (-\bar \lambda^{-1})}^{-1}\end{equation}
for  $B=(g^+(\lambda=0))^{-1}\colon\Sigma\rightarrow \SL(2, \C)$.
Combining \eqref{eq:pmgpm1} and \eqref{eq:pmgpm2} we get
 $\bar B B = -\Id$. This implies that $B$ lies in the same conjugacy class as $\delta = \begin{pmatrix}0 & 1\\-1 & 0 \end{pmatrix},$ i.e., there exist $G\colon\Sigma\to\mathrm{SL}(2,\C)$ satisfying
$\delta=G^{-1} B \overline{G}.$ Then 
$$\nabla^\lambda =\wt \nabla^\lambda . (g^+(\lambda) G) $$
 satisfies $ \overline{\nabla^{-\bar \lambda^{-1}}}= \nabla^\lambda .\delta$ and is therefore the associated family of a self-duality solution with respect to the standard hermitian metric.
 
\begin{remark}
In this paper we construct real holomorphic sections via loop group methods, i.e, we write down a particular lift of a real section $s$ of the Deligne-Hitchin moduli space around $\lambda =0$ in terms of a Fuchsian potential $\eta$ (i.e., a $\lambda$-dependent  Fuchsian connection 1-form). To obtain the actual harmonic map into the hyperbolic 3-space $\H^3$ (or the associated self-duality solution), we need thus to first perform a global Birkhoff factorization and then show that the obtained section is negative. This needs further conditions, as examples for which the harmonic map into $\H^3$ becomes singular and intersects the boundary at infinity of $\H^3$ exists
 \cite{HH} as well as positive real sections with global Birkhoff factorization that give rise to harmonic maps into the de-Sitter 3-space \cite[Theorem 3.4]{BHR}. 
  \end{remark}

\subsection{Goldman's symplectic form on the moduli space of $\lambda$-connections}\label{sec:goldman}

Fix $\lambda\in\C$
and consider the space of $\lambda$-connections modulo gauge transformations.
Let $(\bar\partial,D)$ be a $\lambda$-connection. A tangent vector to the (infinite dimensional) space of $\lambda$-connection is given by 
\[(A,B)\in\Omega^{(0,1)}(\mathfrak{sl}(2,\C))\oplus\Omega^{(1,0)}(\mathfrak{sl}(2,\C))\]
satisfying the linearized compatibility (flatness) condition
\begin{equation}
\begin{split}
0&=\bar\partial B+ DA.
\end{split}
\end{equation}
Tangent vectors at $(\dbar, D)$ which are generated by the infinitesimal gauge transformation $\xi\in\Gamma(\Sigma,\mathfrak{sl}(2,\C))$
 are given by
\[(A,B)=(\bar\partial\xi,D\xi).\]

The Goldman symplectic structure $\Omega^\lambda$ on the moduli space of $\lambda$-connections \cite[Section 1]{Gold} is defined to be
\begin{equation}\label{GoldmanO}
\Omega^\lambda( (A_1,B_1),(A_2,B_2))=4\int_\Sigma\tr(A_1\wedge B_2-A_2\wedge B_1)\end{equation}
for $(A_1,B_1), (A_2,B_2)\in\Omega^{(0,1)}(\mathfrak{sl}(2,\C))\oplus\Omega^{(1,0)}(\mathfrak{sl}(2,\C))$
representing tangent vectors.
On a compact Riemann surface, the symplectic structure $\Omega^\lambda$ is gauge invariant, and thus can be computed using arbitrary representatives of the tangent vectors, which makes it well-defined on the moduli space of $\lambda$-connections. Since we are not aware of any explicit reference (except for $\lambda=0$ and $\lambda=1$), we give the simple proof
(which is of course an  instance of an infinite-dimensional symplectic reduction for the gauge group action with the curvature as moment map, see \cite{AB}).
\begin{lemma}
Let $\lambda\in\C$ be fixed.
The holomorphic symplectic form $\Omega^\lambda$ is well-defined on the moduli space of $\lambda$-connections.
\end{lemma}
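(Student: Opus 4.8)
The plan is to show that the bilinear form $O^\lambda$ defined by \eqref{GoldmanO} descends to the quotient by the gauge group. There are two things to verify: first that $O^\lambda$ is well-defined on the space of tangent vectors (i.e.\ that the integral converges and is skew-symmetric, which is immediate from the definition), and second, the substantive point, that its value is unchanged if we add to either tangent vector a tangent vector generated by an infinitesimal gauge transformation. Since $O^\lambda$ is manifestly skew-symmetric, it suffices to fix one argument $(A,B)$ satisfying the linearized flatness condition $\bar\partial B + DA = 0$, and show that $O^\lambda\big((A,B),(\bar\partial\xi, D\xi)\big) = 0$ for every $\xi\in\Gamma(\Sigma,\mathfrak{sl}(2,\C))$.

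The key computation is therefore
\[
\int_\Sigma \tr\big(A\wedge D\xi - \bar\partial\xi \wedge B\big),
\]
which I would evaluate by integration by parts. The idea is to recognize the integrand, after using the cyclicity of trace and the Leibniz rules for $D$ and $\bar\partial$ acting on the $\mathfrak{sl}(2,\C)$-valued forms, as an exact form $d\,\tr(\xi\,\omega)$ for an appropriate 1-form $\omega$ built from $A$, $B$. Concretely, I expect that
\[
\bar\partial\,\tr(\xi B) = \tr(\bar\partial\xi\wedge B) + \tr(\xi\,\bar\partial B)
\and
D\,\tr(\xi A) \;\text{(suitably interpreted)} = \lambda\,\partial\,\tr(\xi A) + \cdots,
\]
so that combining the two and invoking the linearized flatness relation $\bar\partial B = -DA$ makes the non-exact terms cancel. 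The remaining total derivative integrates to zero by Stokes' theorem on the compact surface $\Sigma$ (this is precisely where compactness is used). Some care is needed with the $\lambda$-Leibniz rule: because $D$ satisfies $D(f s)=\lambda\,\partial f\otimes s + f\,Ds$, the natural "integration by parts" identity for $D$ carries a factor of $\lambda$, and one must check that the $\lambda$-weighted term pairs correctly against the $(0,1)$-type term coming from $\bar\partial$ so that the combination is genuinely exact rather than merely closed up to a $\lambda$-dependent discrepancy.

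The main obstacle, and the only place the argument is more than bookkeeping, is organizing this Leibniz-rule cancellation so that the mismatch in the $\lambda$-factors between the $D$-term and the $\bar\partial$-term is resolved by the linearized compatibility condition. Once the integrand is exhibited as $d$ of a globally defined 1-form on $\Sigma$, Stokes' theorem closes the argument: $O^\lambda\big((A,B),(\bar\partial\xi,D\xi)\big)=0$, hence $O^\lambda$ is gauge invariant and descends to the moduli space. As noted in the statement, this is the finite-dimensional shadow of the moment-map/symplectic-reduction picture of \cite{AB}, with the curvature as moment map, and the computation above is exactly the infinitesimal version of gauge invariance of that symplectic form; holomorphicity in the tangent variables is evident from the bilinearity of \eqref{GoldmanO} over $\C$.
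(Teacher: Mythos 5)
Your core computation is exactly the paper's: pair a tangent vector $(A,B)$ satisfying $\bar\partial B+DA=0$ against a gauge direction $(\bar\partial\xi,D\xi)$, integrate by parts, cancel the non-exact terms by linearized flatness, and kill the exact terms by Stokes. Your worry about the $\lambda$-Leibniz rule also resolves itself more simply than you fear: since traces are scalar-valued, $\lambda\partial\,\tr(\xi A)=\tr(D\xi\wedge A)+\tr(\xi\,DA)$ and $\bar\partial\,\tr(\xi B)=\tr(\bar\partial\xi\wedge B)+\tr(\xi\,\bar\partial B)$, so the integrand becomes
\[
\tr\bigl(A\wedge D\xi-\bar\partial\xi\wedge B\bigr)=\tr\bigl(\xi\,(DA+\bar\partial B)\bigr)-\lambda\,d\,\tr(\xi A)-d\,\tr(\xi B);
\]
the first term vanishes by linearized flatness, and the two boundary terms are \emph{separately} exact (one carries the factor $\lambda$, the other does not, and they never need to combine), so each integrates to zero on the compact surface. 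Here one uses that $\partial=d$ on $(0,1)$-forms and $\bar\partial=d$ on $(1,0)$-forms in complex dimension one.

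The gap is in what "descends to the quotient" requires. A tangent vector to the moduli space at $[(\bar\partial,D)]$ can be represented at any point of the gauge orbit, and changing the representative by $g$ conjugates the representatives: $(A,B)\mapsto(g^{-1}Ag,\,g^{-1}Bg)$. So descent needs \emph{two} properties: horizontality (vanishing on gauge-orbit directions, which you prove) \emph{and} invariance of $O^\lambda$ under this simultaneous conjugation, which you never address. Your concluding step "$O^\lambda\bigl((A,B),(\bar\partial\xi,D\xi)\bigr)=0$, hence $O^\lambda$ is gauge invariant" is a non-sequitur: vanishing on vertical vectors does not by itself give invariance under the group action (one could deduce it from horizontality, closedness of $O^\lambda$, and connectedness of the gauge group via Cartan's formula, but you make no such argument). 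The paper checks invariance directly and trivially: gauge transformations act on tangent vectors by conjugation, and the trace is conjugation-invariant, so $O^\lambda\bigl((A_1,B_1).g,(A_2,B_2).g\bigr)=O^\lambda\bigl((A_1,B_1),(A_2,B_2)\bigr)$. Adding that one line makes your argument complete.
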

\begin{proof}
By construction, $\Omega^\lambda$ is complex bilinear. Proposition \ref{proId} below identifies $\Omega^\lambda$ with the
twisted holomorphic symplectic form \eqref{deftwistedOmega2} restricted to the fiber over $\lambda,$  up to a constant factor.  Thus closeness and non-degeneracy follows from \cite[ Section 3(F)] {HKLR}. 
It remains to show that $\Omega^\lambda$ descents to the moduli space. A gauge transformation acts on tangent vectors  to the space of connections by conjugation, and since the trace is conjugation invariant we obtain
\[\Omega^\lambda((A_1,B_1).g,(A_2,B_2).g)=\Omega^\lambda((A_1,B_1),(A_2,B_2)).\]
Let $(A_1,B_1)$ satisfy $0=\bar\partial B+ DA$ and let $(A_2,B_2)=(\bar\partial\xi,D\xi)$ be a tangent vector that is also tangent to the gauge orbit.
Then,
\begin{equation}\label{def:goldi}
\begin{split}
\Omega^\lambda((A_1,B_1),(A_2,B_2))&=4 \int_\Sigma\tr(A_1\wedge D\xi-\bar\partial\xi\wedge B_1)\\
&=-4\int_\Sigma d\,\tr(A_1\xi+\xi B_1)-4\int_\Sigma \tr(DA_1\xi+\xi \bar\partial B_1) =0.
\end{split}
\end{equation}
Therefore,  $\Omega^\lambda$ is well-defined on the quotient space (the moduli space of $\lambda$-connections). 
\end{proof}

The Goldman symplectic form in fact coincides (up to scaling) with the twisted holomorphic symplectic form \eqref{deftwistedOmega2}
on the twistor space $\mathcal P = \mathcal M_{SD} \times \C P^1$ of the moduli space of self-duality solutions, also compare with \cite[Equation (4.10)]{GMN}.

\begin{proposition}\label{proId}
Let $\lambda\in\C\subset \C P^1$ be fixed. Then, the fiber $\mathcal P_\lambda=\pi^{-1}(\lambda)$ is the moduli space of $\lambda$-connections, and
\[\lambda\varpi_{\mid  \mathcal P_\lambda} = \Omega^\lambda\]
is the Goldman symplectic form  on the moduli space of $\lambda$-connections.
\end{proposition}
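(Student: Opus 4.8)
The plan is to verify the identification fiberwise by evaluating both sides on tangent vectors to the moduli space of $\lambda$-connections, expressed through representative pairs $(A,B)\in\Omega^{(0,1)}(\mathfrak{sl}(2,\C))\oplus\Omega^{(1,0)}(\mathfrak{sl}(2,\C))$. The left-hand side $\lambda\varpi_{\mid\mathcal P_\lambda}$ is a combination of the three K\"ahler forms $\omega_I,\omega_J,\omega_K$ as given in \eqref{deftwistedOmega2}, so the task reduces to computing how each of these forms acts on the tangent vectors once we identify the fiber $\mathcal P_\lambda$ with the moduli space of $\lambda$-connections. First I would make this identification explicit: a $\lambda$-connection $(\dbar,D)$ corresponds, via the twistor construction and the associated family \eqref{nabla-lambda}, to a point of $\mathcal M_{SD}$ equipped with the complex structure $I_\lambda$, and I would track how an infinitesimal deformation $(A,B)$ of the $\lambda$-connection translates into a tangent vector of $\mathcal M_{SD}$ under this correspondence.

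The key computation is to express $\omega_I,\omega_J,\omega_K$ in terms of the pairing $\int_\Sigma\tr(A_1\wedge B_2-A_2\wedge B_1)$ at the special values $\lambda=0$ (where $I_\lambda=I$) and $\lambda=1$ (where $I_1=J$), since the cases $\lambda=0$ and $\lambda=1$ are precisely the ones already available in the literature, as the excerpt notes. At $\lambda=0$ the $\lambda$-connection is a Higgs pair and $O^0$ should reproduce the holomorphic symplectic form $\omega_J+i\omega_K$ on $\mathcal M_{Higgs}$ in complex structure $I$, while at $\lambda=1$ the flat connection picture gives the Goldman form on $\mathcal M_{dR}$, which is $-2\omega_I$ in complex structure $J$. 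I would then check that the explicit $\lambda$-dependence of $O^\lambda$ in \eqref{GoldmanO} — which enters only through the $\lambda$-dependence of the pair $(A,B)$ representing a fixed tangent vector as $\lambda$ varies along a holomorphic section — matches the Laurent polynomial $\lambda\varpi=(\omega_J+i\omega_K)-2\lambda\omega_I-\lambda^2(\omega_J-i\omega_K)$ term by term. Because both $\lambda\varpi$ and $\lambda O^\lambda$ are polynomial (degree two) in $\lambda$, agreement at the three points $\lambda=0,1$ together with the correct leading behavior at $\lambda=\infty$ (governed by the Deligne gluing to $\mathcal M_{Hod}(\overline{\Sigma})$ and the conjugate Higgs structure) forces agreement everywhere, and this polynomial-interpolation argument is the cleanest way to finish.

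The main obstacle I anticipate is getting the correspondence between deformations of $\lambda$-connections and tangent vectors to $\mathcal M_{SD}$ precisely right, including all factors of $\lambda$, $i$, and the normalizing constant $4$ in \eqref{GoldmanO}. The subtlety is that a tangent vector to $\mathcal M_{SD}$ is a fixed geometric object, but its representative $(A,B)$ as a deformation of the $\lambda$-connection $(\dbar^\nabla+\lambda\Psi^*,\lambda\partial^\nabla+\Psi)$ depends on $\lambda$ in a definite way dictated by differentiating the associated family; one must substitute this $\lambda$-dependence into \eqref{GoldmanO} and separate the resulting expression into its homogeneous pieces in $\lambda$. I would handle this by writing the deformation of the self-duality solution as a variation of $(\nabla,\Psi)$, propagating it through the formulas $\dbar=\dbar^\nabla+\lambda\Psi^*$ and $D=\lambda\partial^\nabla+\Psi$ to get $A$ and $B$ explicitly as affine functions of $\lambda$, and then comparing the coefficient of each power $\lambda^k$ in $\lambda O^\lambda$ against the corresponding K\"ahler-form term in $\lambda\varpi$. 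The sign conventions flagged in the remark after \eqref{deftwistedOmega2} and in the definition of the $\omega$'s via $\omega_I=-g(\cdot,I\cdot)$ must be tracked with care, as they are exactly where an overall sign discrepancy could otherwise creep in.
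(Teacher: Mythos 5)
Your core mechanism is the same as the paper's: identify tangent vectors to the fiber $\mathcal P_\lambda$ with tangent vectors to $\mathcal M_{SD}$, differentiate the associated family $(\dbar^\nabla+\lambda\Psi^*,\lambda\partial^\nabla+\Psi)$ along a twistor line to obtain representatives $(A,B)=(\xi+\lambda\phi^*,\phi-\lambda\xi^*)$ that are affine in $\lambda$, substitute into the bilinear pairing \eqref{GoldmanO}, and compare the resulting quadratic polynomial in $\lambda$ with $\lambda\varpi=(\omega_J+i\omega_K)-2\lambda\omega_I-\lambda^2(\omega_J-i\omega_K)$. The paper does exactly this, matching each coefficient directly after computing $\omega_I,\omega_J,\omega_K$ explicitly as integrals on \emph{harmonic} representatives, i.e.\ representatives satisfying the linearized self-duality equations and orthogonality to the gauge orbit, which is what validates the pointwise formulas $I(\xi,\phi)=(i\xi,i\phi)$, $J(\xi,\phi)=(i\phi^*,-i\xi^*)$, $K(\xi,\phi)=(-\phi^*,\xi^*)$ together with Hitchin's metric. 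Your third paragraph is precisely this direct comparison, and carried out with those explicit formulas it is the paper's proof; your sketch omits only the gauge-fixing condition on representatives, which is needed before the quaternionic formulas can be applied.

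The interpolation shortcut in your second paragraph, however, contains a concrete error: the value of $\lambda\varpi$ at $\lambda=1$ is $-2\omega_I+2i\omega_K=2i(\omega_K+i\omega_I)$, namely $2i$ times the holomorphic symplectic form with respect to $J$ --- not $-2\omega_I$. A real $2$-form such as $-2\omega_I$ cannot equal the complex-valued Goldman form $O^1$; if you feed $-2\omega_I$ in as the interpolation datum at $\lambda=1$, the three-point interpolation produces a polynomial that does not agree with $\lambda\varpi$, and the argument collapses. Moreover, the shortcut saves less than it appears to: to certify agreement at $\lambda=0$ and $\lambda=1$ with the normalizations actually used here (the factor $4$ in \eqref{GoldmanO}, the factor $2i$ in Hitchin's metric, the convention $\omega_I=-g(\cdot,I\cdot)$), you must derive the same explicit integral formulas for $\omega_I,\omega_J,\omega_K$ that the paper derives --- literature statements with other conventions cannot be quoted verbatim --- and once those are in hand the term-by-term comparison settles all $\lambda$ at once, with no need for the delicate rescaling bookkeeping of the Deligne gluing that your third datum at $\lambda=\infty$ would require.
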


\begin{proof}
For $\lambda \in \C$ fixed, it is shown in \cite[Theorem 4.2]{Si2} that $\mathcal P_\lambda=\pi^{-1}(\{\lambda\})$ is the moduli space of $\lambda$-connections.
In order to evaluate the corresponding symplectic forms, we need to first find appropriate representatives for the tangent vectors.
To do so, let $h$ be the standard hermitian metric and let $(\nabla,\Psi)$ be a solution of the self-duality equations.  The tangent space of $\mathcal M_{SD}$  at $[\nabla,\Psi]$ is given by \[(\xi,\phi) \in \Omega^{(0,1)}(\mathfrak{sl}(2,\C))\oplus\Omega^{(1,0)}(\mathfrak{sl}(2,\C))\]
satisfying 
\begin{equation}\label{goeq1}
\begin{split}
0&=d^\nabla (\xi-\xi^*)+\tfrac{1}{2}[\phi\wedge\Psi^*]+\tfrac{1}{2}[\Psi\wedge \phi^*]\\
0&=\bar\partial^\nabla \phi+[\xi,\Psi]\\
\end{split}
\end{equation}
modulo infinitesimal gauge deformations (see \cite[(6.1)]{Hi1}).  
These equations mean that we have an infinitesimal deformation
\[t\mapsto (\nabla+t (\xi- \xi^*),\Psi+t\phi)\]
 of a solution to the self-duality equation (for the fixed hermitian metric $h$), and we can choose harmonic representatives which are
 orthogonal to the (unitary) gauge orbit, which is equivalent to
 \begin{equation}\label{goeq2}
\begin{split}\mathcal D^*(\xi,\phi)=0
\end{split}
\end{equation}
 where
 \[\mathcal D \colon\Gamma(\Sigma,\mathfrak{su}(2))\to\Omega^{(0,1)}(\Sigma,\mathfrak{sl}(2))\oplus\Omega^{(1,0)}(\Sigma,\mathfrak{sl}(2)); \quad \psi\mapsto \left((d^\nabla\psi)'',[\Psi,\psi]\right)\]
 see \cite{Hi1, Fred}. Provided \eqref{goeq1}, \eqref{goeq2} hold,
the complex structures $I,J,K=IJ$ are given by
\begin{equation}
\begin{split}
I(\xi,\phi)=(i\xi,i\phi)\quad 
J(\xi,\phi)=(i\phi^*,-i\xi^*) \and
K(\xi,\phi)=(-\phi^*,\xi^*),
\end{split}
\end{equation}
see   \cite[page 109]{Hi1}. Provided \eqref{goeq1}, \eqref{goeq2} hold for  $(\xi_k, \phi_k)$, $k=1,2$,
the Hitchin metric on $\mathcal M_{SD}$ (see \cite[(6.2)]{Hi1}) is defined to be
\[g\left((\xi_1,\phi_1),(\xi_2,\phi_2)\right):=2i\int_\Sigma\tr\left(\xi_1^*\wedge \xi_2+\xi_2^*\wedge \xi_1+\phi_1\wedge\phi_2^*+\phi_2\wedge\phi_1^*\right).\]
Thus 
\begin{equation*}
\begin{split}
\omega_I((\xi_1,\phi_1),(\xi_2,\phi_2))&=-g((\xi_1,\phi_1),(i\xi_2,i\phi_2))\\
&=2\int_\Sigma\tr(\xi_1^*\wedge \xi_2-\xi_2^*\wedge \xi_1-\phi_1\wedge\phi_2^*+ \phi_2\wedge\phi_1^*)\\
\omega_J((\xi_1,\phi_1),(\xi_2,\phi_2))&=2\int_\Sigma\tr(\xi_1^*\wedge \phi_2^*-\phi_2\wedge \xi_1+\phi_1\wedge\xi_2-\xi_2^*\wedge\phi_1^*)\\
\omega_K((\xi_1,\phi_1),(\xi_2,\phi_2))&=-2i\int_\Sigma\tr(-\xi_1^*\wedge \phi_2^*-\phi_2\wedge \xi_1+\phi_1\wedge\xi_2+\xi_2^*\wedge\phi_1^*)
\end{split}
\end{equation*}
which gives
\begin{equation*}
\begin{split}
(\omega_J+i\omega_K)(\xi_1,\phi_1),(\xi_2,\phi_2)&=-4\int_\Sigma\tr(\phi_2\wedge\xi_1-\phi_1\wedge\xi_2).
\end{split}
\end{equation*}

Recall that twistor lines corresponding to the self duality solution $(\nabla,\Psi)$ are given by
\[\lambda\longmapsto[\lambda,\bar\partial^\nabla+\lambda\Psi^*,\Psi+\lambda\partial^\nabla]_\Sigma \in \mathcal M_{Hod} \subset \mathcal M_{DH}\]
or equivalently, by the associated family of flat connections
 \begin{equation}\label{associated_family}\lambda\in\C^*\longmapsto \nabla+\lambda^{-1}\Psi+\lambda\Psi^*.\end{equation}
Thus, the tangent vectors $X=(\xi_1,\phi_1)$ and $Y=(\xi_2,\phi_2)$ 
correspond to sections of the normal bundle to the twistor line in $\mathcal M_{DH}$ given by
\[X:  \lambda \longmapsto  [0,\xi_1+\lambda \phi_1^*,\phi_1-\lambda\xi_1^*]\]
and
\[Y: \lambda \longmapsto [0,\xi_2+\lambda \phi_2^*,\phi_2-\lambda\xi_2^*].\]
 Therefore, when fixing $\lambda \in \C$, we obtain from \eqref{GoldmanO}
\begin{equation}\label{eq:omlam}
\begin{split}
\Omega^\lambda(X,Y)=&\Omega^\lambda((\xi_1+\lambda \phi_1^*,\phi_1-\lambda\xi_1^*),(\xi_2+\lambda \phi_2^*,\phi_2-\lambda\xi_2^*))\\
=&(\omega_J+i\omega_K)(X,Y)-2\lambda\omega_I(X,Y)-\lambda^2(\omega_J-i\omega_K)(X,Y).
\end{split}
\end{equation}
\end{proof}

\section{Strongly parabolic Higgs bundles}\label{sec:paradh}
In this section we generalise the setup in section \ref{pre} to punctured Riemann surfaces.  We introduce strongly parabolic Higgs fields, and explain their relationship to Higgs bundles on compact surfaces when their parabolic weights are rational. We will restrict to the $\mathfrak{sl}(2,\C)$-case, which we refer to as {\em trace-free}. In particular,  the holomorphic bundles are of rank 2, have trivial determinant bundle, and the corresponding Higgs fields are trace-free.

\subsection{Parabolic structures and logarithmic connections} Let $\Sigma$ be a compact Riemann surface, $p_1,\dots,p_n\in \Sigma$ be pairwise distinct and
$\Sigma^0=\Sigma\setminus\{p_1,\dots,p_n\}.$ Let $V=\C^2\times\Sigma\to\Sigma$ be the trivial $ C^\infty$-complex vector bundle
over $\Sigma$ of rank two and $d=\partial^0+\bar\partial^0$ be the decomposition of the deRham differential $d$ on $\Sigma$ into its $(1,0)$ and $(0,1)$ parts.
Furthermore, let $\bar\partial=\bar\partial^0+\gamma$, $\gamma\in\Gamma(\Sigma,\bar K_\Sigma\mathfrak{sl}(2,\C))$ be a holomorphic structure on $V$, and consider the corresponding holomorphic rank 2 vector bundle $\mathcal V:=(V,\bar\partial).$ 
Since $\gamma$ is trace free, the determinant bundle $\Lambda^2V$ of $\mathcal V$ is holomorphically trivial. 
 A holomorphic $\mathrm{SL}(2,\C)$ frame $F=(s,t)$ on an open set $U\subset \Sigma$ is given by
two holomorphic sections $s,t\in H^0(U,\mathcal V)$ such that $s\wedge t=1.$

\begin{definition}
A (trace-free) parabolic structure $\mathcal P$ on the holomorphic bundle  $\mathcal V$ with parabolic divisor $D=p_1+\dots+p_n$
is given by a collection of quasiparabolic lines, i.e., 1-dimensional vector subspaces $\ell_j\subset V_{p_j} $,
together with parabolic weights $\alpha_j\in[0,\tfrac{1}{2}[$ 
for $j=1,\dots,n$.
\end{definition}
For a general definition of parabolic structures see for example  \cite{MS,Si1}, and for the trace-free convention see \cite{Pir}.

The {\em parabolic degree} of a holomorphic subbundle $L$ of $\mathcal V$ on $\Sigma$ is defined by
\[\text{pdeg}(L)=\deg(L)+\sum_j (-1)^{\sigma_j} \alpha_j,\quad\text{with}\quad \sigma_j=\begin{cases} 0 &\quad\text{if}\; L_{p_j}=\ell_j\\ 1 &\quad\text{if}\; L_{p_j}\neq\ell_j \end{cases}.\]
Two different parabolic structures $\mathcal P$ and $\wt{\mathcal P}$ on two holomorphic bundles $\mathcal V$ respectively $\wt{\mathcal V}$ over $\Sigma$ are called {\em isomorphic} if the singular divisors $D=\wt D$ coincide, the parabolic weights are the same $\alpha_j=\wt \alpha_j$ for all $j, $  and if there is a holomorphic bundle isomorphism $\Phi\colon \mathcal V\to\wt{\mathcal V}$ mapping the parabolic lines onto each other,  i.e., $\Phi(\ell_k)=\wt \ell_j$ for all $j$.

\begin{definition}
A parabolic structure $\mathcal P$ on the holomorphic bundle  $\mathcal V$ is called (semi-)stable if and only if
all holomorphic subbundles have negative (non-positive) parabolic degree. A parabolic structure is called {\em polystable}
if it is either stable or the underlying holomorphic bundle $\mathcal V$ is the direct sum of two holomorphic line subbundles
of parabolic degree $0.$
\end{definition}
 The notion of a parabolic structure was introduced to generalize the Narasimhan-Seshadri theorem \cite{NS} to the case of punctured Riemann surfaces \cite{MS}. 

\begin{definition}
A (trace-free) logarithmic connection on the holomorphic bundle $\mathcal V \rightarrow \Sigma$ with singular divisor 
$D=p_1+\dots+p_n$ is  a connection $\nabla$ on $\mathcal V{\mid_ {\Sigma^0}}$ with $\bar\partial^\nabla=\bar\partial_{V}{\mid_ {\Sigma^0}},$ such that with respect to any holomorphic $\mathrm{SL}(2,\C)$ frame $F$ of $\mathcal V$ on an open subset $U\subset\Sigma$,
the connection 1-form $\alpha=\alpha_F$ of $\nabla$ with respect to $F$ is a meromorphic $\mathfrak{sl}(2,\C)$-valued 1-form with at most first order poles  at $U\cap\{p_1,\dots,p_n\}$ and holomorphic elsewhere.
\end{definition}
Note that a logarithmic connection on a Riemann surface is automatically flat, as its connection 1-form is meromorphic.
A holomorphic frame $F$ of $\mathcal V$ restricted to $U\subset\Sigma$ can be regarded as a holomorphic isomorphism from the trivial holomorphic rank 2 bundle to $\mathcal V$ restricted to $U.$
Thus, the {\em residue} 
\[\text{res}_{p_j}\nabla:=F \circ \text{res}_{p_j}\alpha_F \circ F^{-1}\in \mathfrak{sl}(V_p)\]
of a logarithmic connection $\nabla$ on a holomorphic bundle $\mathcal V$ at a singular point $p_j$ is well-defined, i.e., independent of the choice of the holomorphic frame.

Assume that the two eigenvalues $\pm\alpha_j$ of $\text{res}_{p_j}\nabla$ do not differ by an integer. Then, the (local) monodromy of $\nabla$ along a simple closed curve surrounding $p_j$ lies in the conjugacy class
of 
\begin{equation}\label{eq:locmon}\exp(2\pi i\, \text{res}_{p_j}\nabla)\cong\begin{pmatrix}e^{2\pi i\alpha_j}&0\\0&e^{-2\pi i\alpha_j}\end{pmatrix}.\end{equation}
In particular, if the eigenvalues $\pm \alpha_j$ are real with $\alpha_j\in (0,\tfrac{1}{2})$, then the local monodromy lies in the conjugacy class of unitary matrices. In the following, we always assume that this is the case, i.e., 
we impose the condition \[\det(\text{res}_{p_j}\nabla)\in(-\tfrac{1}{4},0)\]
on the trace-free residues. Then a logarithmic connection determines a parabolic structure with underlying holomorphic bundle  $\mathcal V$ and singular divisor $D=p_1+\dots p_n$ such that the parabolic weight $0<\alpha_j< \tfrac{1}{2}$ at $p_j$ is
the positive eigenvalue of $\text{res}_{p_j}\nabla$ and the quasiparabolic line $\ell_j$ is the corresponding eigenline. 
We will refer to the positive eigenvalues of the residues $\text{res}_{p_j}\nabla$ as the parabolic weights of $\nabla.$
If the monodromy representation is irreducible and unitary (up to overall conjugation), then the parabolic structure is stable. The converse statement also holds by the  Mehta-Seshadri theorem \cite{MS}: for every stable parabolic structure $\mathcal P$ there exists a (unique) logarithmic connection $\nabla$ with unitary monodromy (up to conjugation) inducing the parabolic structure $\mathcal P.$

This correspondence extends to reducible unitary logarithmic connections with strictly polystable parabolic structures: the induced parabolic structure of a unitary connection with abelian monodromy is the direct sum of two holomorphic line subbundles of parabolic degree 0. And by abelian Hodge theory 
and the residue theorem, the converse is true as well.

\subsubsection{Parabolic structures on the 4-punctured sphere}
In this paper, we mainly study the case of rank $2$ trace-free parabolic structures over $\Sigma=\C P^1$ with 4 singular points $p_1,\dots, p_4\in\mathbb CP^1$ such that the parabolic weights $\alpha_j$ satisfy
\begin{equation}\label{para-weights}\alpha_1=\alpha_2=\alpha_3=\alpha_4=t\in (0,\tfrac{1}{2}).\end{equation}
By the Birkhoff-Grothendieck theorem, every holomorphic rank 2 vector bundle over $\C P^1$ with trivial determinant 
is of the form $\mathcal V=\mathcal O(k)\oplus\mathcal O(-k)$ for some unique $k\in \N^{\geq0}.$
\begin{lemma}\label{lemmaOk}
For $k\geq 2$, the holomorphic bundle $\mathcal V= \mathcal O(k)\oplus\mathcal O(-k) \rightarrow \C P^1$ does not admit a logarithmic connection on the 4-punctured sphere
with parabolic weights \eqref{para-weights}. In particular, there is no stable or semi-stable parabolic structure with
these weights.
\end{lemma}
\begin{proof}
Assume there is such a connection $\nabla$ on  $\mathcal V= \mathcal O(k)\oplus\mathcal O(-k).$
Writing $\nabla$ with respect to this splitting gives
\[\nabla=\begin{pmatrix} D &\beta_+\\ \beta_-&D^*\end{pmatrix}.\]
Then,   the upper-left entry $D$ is a logarithmic connection on $\mathcal O(k)$ (the definition is analogous to the $\mathfrak{sl}(2,\C)$ case) with dual
connection $D^*,$ and $\beta_+\in H^0(\C P^1, K\mathcal O(2k+4))$ and  $\beta_-\in H^0(\C P^1, K\mathcal O(-2k+4))$, where the $\mathcal O(4)$ factor comes from the 4 singularities.
Since $K=K_{\C P^1}\cong\mathcal O(-2)$ we have by degree count that
 $\beta_-\equiv0$. This is a contradiction,
since in this case $D$ would be a meromorphic connection on $L= O(k)$ with residues $\pm t$ at the 4 singular points, thus the residue formula gives $0=\text{pdeg}(L)\geq k-4 t>0.$ 

The same computation gives that the holomorphic bundle $\mathcal V$ does not admit a stable or semi-stable parabolic structure, since 
the parabolic degree of $\mathcal O(k)$ is always positive.
\end{proof}

The two remaining cases $k=1$ and $k=0$ behave quite differently.
\begin{lemma}\label{lemOK1} Let $t \in (0, \tfrac{1}{4}) \cup (\tfrac{1}{4}, \tfrac{1}{2}).$
Then there is, up to isomorphism, exactly one parabolic structure $\mathcal P$ on $\mathcal V=\mathcal O(1)\oplus\mathcal O(-1)$ over $\C P ^1$ with 4 singular points and parabolic weights \eqref{para-weights} which admits a logarithmic connection. 
The parabolic structure $\mathcal P$ is stable if and only if $t>\tfrac{1}{4}.$ Furthermore, up to isomorphism, the space
of logarithmic connections on $\mathcal O(1)\oplus\mathcal O(-1)$ is a complex affine line.
\end{lemma}
\begin{proof}We refer to \cite[Proposition 18]{HH3}  for the proof.
\end{proof}

If the bundle $\mathcal V\cong\mathcal O\oplus\mathcal O$ is trivial, we can compare the different quasiparabolic lines over the different singular points with respect to a fixed background $\mathbb C^2$. Since we will be focusing on $t\sim 0$ later, we only discuss stability for $t<\tfrac{1}{4}$ here.
 \begin{lemma}\label{lem:stableparalemma}
Let $t\in (0,\tfrac{1}{4}).$
A parabolic structure on $\mathcal O\oplus\mathcal O$ over $\C P^1$ with 4 singular points and parabolic weights \eqref{para-weights}
admits a logarithmic connection if and only if every parabolic line $\ell_j,$ $j = 1, .., 4,$ coincides with at most one other line $\ell_k$ for $k= 1, ..., 4$ and $k\neq j$. The parabolic structure $\mathcal P$ is stable 
if and only if the parabolic lines $\ell_j,$ $j= 1, ...,4,$ are pairwise distinct, and $\mathcal P$ is the direct sum of two holomorphic line subbundles
of parabolic degree 0 if and only if we have two pairs of (distinct) parabolic lines.
\end{lemma}
\begin{proof}
Let $\nabla$ be a logarithmic connection on  $\mathcal V=\mathcal O\oplus\mathcal O$. Let $L$ be some holomorphic line subbundle of degree 0, i.e., it is isomorphic to $\mathcal O$. Take a complementary subbundle $L^*$, which is isomorphic to $\mathcal O$ as well, and decompose 
\[\nabla=\begin{pmatrix} D&\beta_+\\\beta_-&D^*\end{pmatrix}\] with respect to $\mathcal V=L\oplus L^*,$ where $D$ is a logarithmic connection on $L$, and \[\beta_-\in H^0(\C P^1,K\mathcal O(4-n)),\]  $n\in\{0,\dots,4\}$ being the number of points $p_j$ for which $L_{p_j} = \ell_j$. For $n>2$ this implies $\beta_-=0$ which then
gives a contradiction to the residue theorem for the line bundle connection $D$. Hence, we obtain $n<3.$
The converse direction can be shown easily by giving explicit formulas using Fuchsian systems, or it follows directly by the Mehta-Seshadri theorem once we know that they are stable.

To show stability, note first that there is no holomorphic sub line bundle of $\mathcal O \oplus \mathcal O$ with positive degree. Moreover, the parabolic degree of a line subbundle $\mathcal O(k)\subset \mathcal O \oplus \mathcal O$ with $k<0$ is always negative since $t<\tfrac{1}{4}$. Thus it remains to consider line subbundles $L$ that are isomorphic to $\mathcal O,$ which means $L$ is constant. If all parabolic lines $\ell_j$ are different, then $n\leq 1$ and we have
pdeg$(L) \leq -2t <0$, from which stability follows.

The case $n=2$ follows from similar arguments together with the residue theorem.
\end{proof}

\begin{proposition}\label{pro:paramodul}
The moduli space $\mathcal M_{Bun}$ of polystable parabolic structures $\mathcal P$ over $\C P^1$ with 4 singular points $p_1,\dots,p_4$ and 
 with parabolic weights given by $t\in(0,\tfrac{1}{4})$ can be identified with $\C P^1$ with three $\Z_2$-orbifold points $0,1,\infty$.
\end{proposition}
\begin{proof}
Since every polystable parabolic bundle admits a (unitary) logarithmic connection by the Mehta-Seshadri theorem
the underlying holomorphic bundle is $\mathcal O\oplus\mathcal O$ by Lemma \ref{lemmaOk} and Lemma \ref{lemOK1}.
Hence, holomorphic gauge transformations of the underlying holomorphic bundle $\mathcal V$ are given by conjugation with constant $\mathrm{SL}(2,\C)$ matrices.
If $\mathcal P$ is stable, all parabolic lines are pairwise distinct.  Fixing the gauge freedom is then equivalent to
normalizing the parabolic lines, i.e., \[\ell_1=0,\quad \ell_2=1,\quad\text{and}\quad \ell_3=\infty.\] Then, 
\[\ell_4=w\in \C P^1\setminus\{0,1,\infty\}\]
uniquely determines a stable parabolic structure (and for each $w\in \C P^1\setminus\{0,1,\infty\}$ there is 
a stable parabolic structure by Lemma \ref{lem:stableparalemma}). We call $w$ the parabolic modulus of the parabolic structure $\mathcal P$. By
identifying
\begin{equation}\label{eq:semisstableparaline}
\begin{split}
 w=0\quad &\Longleftrightarrow  \quad\ell_1=\ell_4 \text{ and } \ell_2=\ell_3\\
 w=1\quad &\Longleftrightarrow  \quad \ell_2=\ell_4 \text{ and } \ell_1=\ell_3\\
 w=\infty\quad &\Longleftrightarrow  \quad\ell_3=\ell_4 \text{ and } \ell_1=\ell_2\\
\end{split}
\end{equation}
we can fill in the missing three points of $\C P^1$.
We first claim  that it is not possible to find a holomorphic family $ w\mapsto \mathcal P(w)$ of polystable  parabolic structures  which represents the modulus $ w$ in any open neighbourhood of
 $ w=0,1,$ or $ w=\infty.$ Consider the case $w=0$ and parametrize 
(in  terms of a centered local holomorphic coordinate $\zeta$ )
\begin{equation}
\begin{split}
\ell_1(\zeta)&=\C(f_1(\zeta)\zeta,1)^T\quad \quad \ell_2(\zeta)=\C(1+f_2(\zeta)\zeta,1)^T\\
 \ell_3(\zeta)&=\C(1+f_3(\zeta)\zeta,1)^T\quad \quad \ell_4(\zeta)=\C(f_4(\zeta)\zeta,1)^T.
 \end{split}
 \end{equation}
 Assume that the lines only meet in first order in $\zeta$, which translates to the condition
  $f_1(0)\neq f_3(0)$ and $f_2(0)\neq f_4(0)$. 
Consider (for small $\zeta\neq0$) the Moebius transformations
 \[x\in\C P^1\longmapsto \frac{\zeta(x-\zeta f_1(\zeta))(f_2(\zeta)-f_3(\zeta)}{(1-x-\zeta f_3(\zeta))(-1+\zeta(f_1(\zeta)-f_2(\zeta)))}\in\C P^1\]
 which satisfy
 \begin{equation}
\begin{split}
  &\ell_1(\zeta)\longmapsto 0, \quad \ell_2(\zeta)\longmapsto 1,\quad  \ell_3(\zeta)\longmapsto \infty\\
  &w(\zeta)=\ell_4(\zeta)\mapsto -\zeta^2\frac{(f_2(\zeta)-f_3(\zeta))(f_1(\zeta)-f_4(\zeta))}{
  (-1-\zeta f_1(\zeta)-\zeta f_2(\zeta))(1+\zeta f_3(\zeta)-\zeta f_4(\zeta))}.
  \end{split}
 \end{equation}
 Hence, $w(\zeta)$ has at $\zeta=0$ a simple branch point as claimed. The other two cases $w=1$ and $w=\infty$ follow analogously.

It remains to exhibit the $\Z_2$-orbifold structure on the moduli space of polystable parabolic structures with parabolic weights $t$ at all 4 singular points.
As we have seen above, the moduli space $\mathcal M_{Bun}$ is given by the set 
of quadruples of  parabolic lines $\ell_j\subset\C$, $j=1,\dots,4$, subject to the condition that the 4 lines are pairwise distinct or  that they are pairwise the same
as in \eqref{eq:semisstableparaline},
up to the action of $\mathrm{SL}(2,\C)$.
Consider the map
\begin{equation}\label{eq:p1Mpbun}\C P^1\to \mathcal M_{Bun}\cong \C P^1;\; [u,v]\mapsto (\ell_1=[u,v],\,\ell_2=[v,-u],\,\ell_3=[-u,v],\,\ell_4=[v,u])/\sim\end{equation}
where $\sim$ denotes the equivalence relation induced by the $\mathrm{SL}(2,\C)$-action. This map is well-defined, as the 4 lines are either pairwise distinct ( $[u,v]\notin\{[\pm 1,1],[\pm i,1],[0,1],[1,0]\}$), 
or  $\ell_1=\ell_2$ and  $\ell_3=\ell_4$ (if $[u,v]=[\pm i,1]$), or $\ell_1=\ell_3$ and  $\ell_2=\ell_4$ (if $[u,v]\in\{[0,1],[1,0]\}$), or $\ell_1=\ell_4$ and  $\ell_2=\ell_3$ (if $[u,v]=[\pm 1,1]$). Consider the $\Z_2\times \Z_2$ action on $\C P^1$ generated by $[u,v]\mapsto [-u,v]$ and
$[u,v]\mapsto [v,u].$ Acting with $\begin{smatrix} i&0\\0&-i\end{smatrix}$ respectively with $\begin{smatrix} 0&i\\ i&0\end{smatrix}$, we see that the map
\eqref{eq:p1Mpbun} is invariant under the $\Z_2\times \Z_2$ action. Furthermore, this map is rational of degree 4, and the only fix points (of the 3 non-trivial elements of order 2) are 
$[\pm 1,1],[\pm i,1],$ or $[0,1]$ and $[1,0],$ respectively.
This proves the proposition.
\end{proof}
As we have seen in Proposition \ref{pro:paramodul} the  $\C P^1$ with  three orbifold singularities $0,1,\infty$ is given by the quotient of $\C P^1$ by the $\Z_2\times\Z_2$ action generated by $\zeta\mapsto -\zeta$ and $\zeta\mapsto \tfrac{1}{\zeta}.$
We will encounter this $\Z_2\times\Z_2$ action again in our symmetric setup, see
Proposition \ref{pro:classparahiggs} below.

\subsubsection{Strongly parabolic Higgs fields}
Let $\nabla$ and $\widetilde\nabla$ be two logarithmic connections with the same induced parabolic structure $\mathcal P$
on the holomorphic bundle $\mathcal V$. The difference $\widetilde\nabla-\nabla$ is a $\mathfrak{sl}(\mathcal V)$-valued meromorphic $1$-form. 
Since the eigenvalues of the residues of $\widetilde\nabla$ and $\nabla$ are the same, and the eigenlines $\ell_j$ for the
positive eigenvalues coincide, the residues 
\[R_j:=\text{res}_{p_j}(\widetilde\nabla-\nabla)=\text{res}_{p_j}(\widetilde\nabla)-\text{res}_{p_j}(\nabla)\] must be nilpotent with $\ell_j\subset \ker R_j.$ This motivates the following definition:
\begin{definition}
Let $\mathcal P$ be a parabolic structure on the holomorphic bundle $\mathcal V$ with singular divisor $D=p_1+\dots+ p_n.$ A strongly parabolic Higgs field is a meromorphic 1-form 
\[\Psi\in H^0(\Sigma, K_\Sigma\mathfrak{sl}(\mathcal V,\C)\otimes \mathcal O_\Sigma(D))\]
with $(\text{res}_{p_j}\Psi)\ell_j=0$ for all $j.$
A strongly parabolic Higgs pair $(\mathcal P,\Psi)$ is called stable if every $\Psi$-invariant holomorphic line subbundle of $\mathcal V$ has negative
degree, and strictly polystable if $\mathcal V$ is the direct sum of two $\Psi$-invariant line bundles of parabolic degree 0.
\end{definition}

We restrict ourselves in the following to the case of the $4$-punctured sphere with parabolic weight $t \in (0, \tfrac{1}{4})$ at each puncture. 
Note that, similarly to the proof of Lemma \ref{lemmaOk}, it can be shown that there is no stable parabolic Higgs pair on $\mathcal O(k)\oplus\mathcal O(-k)$ with $k\geq2$ over $\C P^1$ and only four singular points (together with our restrictions for the range of the weights). As before we treat the cases of $\mathcal V= \mathcal O(-1)\oplus\mathcal O(1)$ and $\mathcal V= \mathcal O\oplus\mathcal O$ separately.

\begin{lemma}\label{lem:unstablepara}
Let $(\mathcal P,\Psi)$ be a stable strongly parabolic Higgs pair with underlying holomorphic bundle $\mathcal V=\mathcal O(-1)\oplus\mathcal O(1).$
Then, up to isomorphisms of $\mathcal P$, all four parabolic lines are contained in $\mathcal O(-1),$ and the Higgs field $\Psi$ takes the form
\[\Psi=\begin{pmatrix}0&\alpha\\ \gamma &0\end{pmatrix}\]
where $\gamma\in H^0(\C P^1,\text{Hom}(\mathcal O(-1),K_{\C P^1}\mathcal O(1)))\cong \C$ is a constant and
$\alpha$ 
is a non-zero meromorphic section of
$\text{Hom}(\mathcal O(1),K_{\C P^1}\mathcal O(-1))$ 
with 4 simple poles at $p_1,\dots,p_4$ which is unique up to a scale that can be fixed by an isomorphism of $\mathcal P$.
\end{lemma}
\begin{proof}
We can assume without loss of generality that $\ell_1,\ell_2$ and $\ell_3$ are contained in $\mathcal O(-1)\subset\mathcal V,$ i.e., $\mathcal O(-1)|_{p_j} = \ell_j,$ for $j = 1, 2, 3.$
Write
\[\Psi=\begin{pmatrix}\beta&\alpha\\\gamma&-\beta\end{pmatrix}\]
with respect to the decomposition $\mathcal V=\mathcal O(-1)\oplus\mathcal O(1).$ Then, $\beta$ is a meromorphic 1-form on $\C P^1$ with at most a simple  pole at $p_4,$ thus $\beta$ must vanish. If $\alpha=0$ then $\mathcal O(1)$ would be an invariant subbundle of positive parabolic degree. Hence,
$\alpha\neq0$ by stability. Since $\text{Hom}(\mathcal O(1),K_{\C P^1}\mathcal O(-1)) \cong \mathcal O(-4)$ is of degree $-4$ and $\alpha\neq0$ can have at most simple poles, $\alpha$ must have a simple pole at each $p_j.$ 
Moreover, this implies $\alpha$ is unique up to scale, which can be fixed by a constant diagonal gauge. Furthermore, we obtain that
$\gamma$ cannot have a pole at $p_1,\dots,p_4$, as the residues of $\Psi$ must be nilpotent, thus $\gamma$ must be a constant which also implies that $\mathcal O(-1)|_{p_4} = \ell_4.$ 
\end{proof}
\begin{lemma}\label{lem:stabhiggspar}
Let $(\mathcal P,\Psi)$ be a stable Higgs pair with strongly parabolic Higgs field $\Psi$ and underlying holomorphic structure $\mathcal O\oplus\mathcal O.$ Then,  $\mathcal P$ is polystable. Conversely, for every polystable $\mathcal P$ on $\mathcal O\oplus\mathcal O$ there is a 
1-dimensional vector space (unique up to parabolic isomorphisms) of strongly parabolic Higgs fields $\Psi$  such that the Higgs pair $(\mathcal P, \Psi)$ is stable if $\Psi \neq 0$.  
\end{lemma}
\begin{proof}
We first show that the underlying parabolic structure $\mathcal P$ for a stable strongly parabolic Higgs pair on $\mathcal O\oplus\mathcal O$, is polystable. Assume $\mathcal P$ is not polystable, then there is a unique degree zero line bundle $L$ with non-negative parabolic degree. Thus, $L$ must contain at least two parabolic lines, i.e., $L|_{p_j} = \ell_j$ for two different $ j \in \{1, ..., 4\}$. 
A strongly parabolic Higgs field $\Psi\in H^0(K_{\C P^1} \mathfrak{sl}(2, \C))$ is given by $$\Psi = \sum_{j=1}^4R_j \tfrac{dz}{z-p_j},$$ with $R_j \in \mathfrak{sl}(2, \C)$ and $\sum_{j=1}^4R_j=0.$ Thus if $L$ contains three or four parabolic lines, in which case $L$ has positive parabolic degree, it must be invariant under $\Psi$ in contradiction to the Higgs pair being stable. If $L$ contains exactly two parabolic lines, then it is of parabolic degree zero, and we can find a complementary constant line bundle $\widetilde L$, which contains either one or two of the remaining parabolic lines. Since the quasiparabolic lines lie in the kernel of $R_j, $ the matrices $R_j$ with respect to the decomposition $\mathcal O\oplus\mathcal O=L\oplus \widetilde L$ are either upper or lower triangular with vanishing diagonals whenever $L$ or $\wt L$ contains the quasiparabolic line $\ell_j$. Thus by residue theorem, as the sum of all residues $R_j$ must be zero, we obtain that $\widetilde L$ must contain two parabolic lines and thus has parabolic degree 0. But then the parabolic bundle is the direct sum of two line bundles of parabolic degree zero and hence $\mathcal P$ must be polystable, which gives a contradiction.

Next, we show that dimension of the space of (stable) strongly parabolic Higgs fields for a given stable parabolic structure is at most 1-dimensional (up to isomorphism). If $\mathcal P$ is stable, i.e., the four quasiparabolic eigenlines are distinct, a short computation shows that there exists no non-zero strongly parabolic Higgs field of $\mathcal P$ with vanishing residue at one of the singular points by the residue theorem. This shows that the space of strongly parabolic Higgs fields is at most 1-dimensional, as the linear combination of two (non-zero) strongly parabolic Higgs fields such that the residue $R_j$ at one singular point vanishes must be identically zero.

The space of stable strongly parabolic Higgs fields
is at least 1-dimensional,
up to conjugation, since every stable parabolic structure $\mathcal P$ with  underlying holomorphic bundle
$\mathcal O\oplus\mathcal O$ and parabolic weights $t\in  (0,\tfrac{1}{4})$ is determined by the quasiparabolic lines
\begin{equation}\label{eq:quasbef}\ell_1=(u,v)^T\C,\quad \ell_2=(v,-u)^T\C,\quad \ell_3=(u,-v)^T\C,\quad \ell_4=(v,u)^T\C
\end{equation}
for suitable $u,v\in\C^2\setminus\{0\}$. 
Then, it can be easily verified that
\begin{equation}\label{eq:psier}
\Psi=
\begin{pmatrix} uv& -u^2\\v^2&-uv\end{pmatrix}\frac{dz}{z-p_1}
+\begin{pmatrix} -uv& -v^2\\u^2&uv\end{pmatrix}\frac{dz}{z-p_2}
+\begin{pmatrix} uv& u^2\\-v^2&-uv\end{pmatrix}\frac{dz}{z-p_3}
+\begin{pmatrix} -uv& v^2\\-u^2&uv\end{pmatrix}\frac{dz}{z-p_4}
\end{equation}
is a non-zero strongly parabolic Higgs field for $\mathcal P$. Since  $\mathcal P$ is already stable, the Higgs pair $(\mathcal P,\Psi)$ is stable as well.

If  $\mathcal P$ is the direct sum of two line bundles of parabolic degree 0, then a stable strongly parabolic Higgs field must be off-diagonal (with respect to the decomposition of the rank two bundle into the two line bundles), with the off-diagonal entries being  two non-zero meromorphic 1-forms with simple poles at the two of the four singular points, where the quasiparabolic lines coincide with the respective holomorphic line bundle. This gives a two-dimensional space (with two complex lines removed)
of possible strongly parabolic Higgs fields. Because for non-vanishing off-diagonals the determinant of the Higgs field is non-vanishing with simple poles, these Higgs fields are stable.
By a diagonal gauge, we can fix the ratio of  the meromorphic 1-forms, and we obtain a complex line (without the origin, where the Higgs field vanishes) of stable parabolic Higgs fields. 

\end{proof}

\begin{lemma}\label{lem:next}
Let $(u,v)\in\C^2\setminus\{0\}$  and consider the strongly parabolic Higgs field $\Psi$ as in \eqref{eq:psier}. Then the underlying
parabolic structures are always polystable and moreover, they are stable except for 
$$uv=0, \quad u^2=v^2 \quad \text{ or } \quad u^2=-v^2.$$
\end{lemma}
\begin{proof} 
Since $4t<1$ every holomorphic subbundle of the holomorphically trivial rank 2 bundle with negative degree also has negative parabolic degree.
Let $L$ be a holomorphic line subbundle of degree $0$, i.e., a constant line in $\C^2.$ Thus the parabolic degree of $L$ can be maximized by choosing $L = \ell_j$ to be one of the quasiparabolic lines. 
These lines $\ell_1, ..., \ell_4$ are given by \eqref{eq:quasbef}. If all four lines are distinct, then any such $L$ has negative parabolic degree $-2t$ and the parabolic structure is stable. From the explicit formulas in \eqref{eq:quasbef} it follows that 
at most two of the lines $\ell_1, ..., \ell_4$ can coincide. And two lines coincide if and only if  $uv=0$, $u^2=v^2$ or $u^2=-v^2.$ In this case we have that there are two complementary lines $L$ and $\wt L$ with parabolic degree zero and $\mathcal P$ is polystable but not stable.
\end{proof}

The parabolic Higgs pair, consisting of the parabolic structure $\mathcal P$ with trivial underlying holomorphic structure and the strongly parabolic Higgs field $\Psi$ as in \eqref{eq:psier}, is uniquely determined by the nilpotent $\mathfrak{sl}(2,\C)$-matrix  \begin{equation}\label{eq:res1higgs}R_1=\text{res}_{p_1}\Psi=\begin{pmatrix} uv& -u^2\\v^2&-uv\end{pmatrix}.\end{equation}
In fact, we have
\begin{equation}\label{eq:psiAs}\Psi=R_1\frac{dz}{z-p_1}+R_2\frac{dz}{z-p_2}+R_3\frac{dz}{z-p_3}+R_4\frac{dz}{z-p_4}\end{equation}
with
\begin{equation}\label{eq:psiAs2}
R_2=(CD)^{-1}R_1(CD),\quad R_3=D^{-1}R_1 D,\quad R_4=C^{-1}R_1 C\end{equation}
for anti-commuting
\begin{equation}\label{eq:CD}
 C=\matrix{0&i\\i&0}\quad \text{and}\quad D=\matrix{i&0\\0&-i}\,.
\end{equation}

Note that the conjugation of $R_1$ by $ C$, $ D$ or $ CD$ leads to a conjugation of the corresponding Higgs field $\Psi$,
which is not true in general for arbitrary $g\in\mathrm{SL}(2,\C)$, as $g$ does neither commute nor anti-commute with $C$ and $D$. In fact, a direct computation gives:

\begin{lemma}\label{41cov}
Let $0\neq R_1\in\mathfrak{sl}(2,\C)$ be nilpotent and $g\in\mathrm{SL}(2,\C).$ Then the parabolic Higgs fields $\Psi$ and $\widetilde \Psi$ corresponding to
$R_1$ and $\widetilde R_1=g^{-1}R_1g$  via \eqref{eq:psiAs} and \eqref{eq:psiAs2} are gauge equivalent if and only if 
\[g\in\,<C,D>,\]
where $<C,D>$ denote the gauge group generated by conjugation with $C$ and $D.$
Therefore, the group of (non-trivial) transformations preserving the gauge orbit is isomorphic to $\Z_2\times\Z_2.$
\end{lemma}
\begin{remark}
Note that a non-zero stable strongly parabolic Higgs field on $\mathcal O\oplus\mathcal O$ uniquely determines the quasiparabolic lines. Hence if the parabolic weights are given, the Higgs field induces a parabolic structure. We call a parabolic Higgs field $\Psi$ of the form \eqref{eq:psier}  {\em symmetric}. 
\end{remark}
As a corollary of the above observations, we have the following classification.
\begin{proposition}\label{pro:classparahiggs}
For $t \in (0, \tfrac{1}{4})$ fixed, the moduli space of polystable strongly  parabolic Higgs fields over $\C P^1$ with 4 singular points 
contains an open dense subset $\mathcal U$ which is isomorphic to the holomorphic cotangent bundle 
$T^*\C P^1$ modulo the $\Z_2\times\Z_2$ action induced by conjugation by $C$ and $D.$
\end{proposition}
\begin{proof}
We define $\mathcal U$ to be the (open) subset of the moduli space of polystable strongly parabolic Higgs pairs over $\C P^1$ with 4 singular points on $\mathcal V= \mathcal O \oplus \mathcal O.$

Whenever the Higgs field is non-zero, the parabolic structure is determined by the symmetric Higgs field $\Psi.$ This Higgs field in turn is determined by the non-zero nilpotent residue
$R_1\in\mathfrak{sl}(2,\C).$ The matrix $R_1$ is determined  unique up to sign by 
$$(u,v)\in\C^2\setminus\{0\}\mapsto\begin{pmatrix} uv &-u^2\\v^2 &-uv\end{pmatrix}.$$ Recall that the kernel of $R_1$ determines the induced quasiparabolic lines via \eqref{eq:psiAs2} or  \eqref{eq:quasbef}. 
Moreover, by Lemma \ref{41cov},  the gauge class of a strongly parabolic Higgs field (or the induced parabolic structure) determines $R_1$ uniquely up to conjugation by the gauge group $\Z_2\times\Z_2$ generated by $C$ and $D.$

When $(u, v) \rightarrow 0$ the induced parabolic structure is determined by the ratio $[u:v]$ in the limit. Thus the moduli space of strongly parabolic structures $\mathcal U$ on $\mathcal O \oplus \mathcal O$ is given by the blow up of $\C^2/\Z_2$ at $(u,v) = 0$ up to taking the quotient by the $\Z_2\times\Z_2$ action. The blow-up of $\C^2/\Z_2$ can then be naturally identified with the cotangent bundle of $\C P^1.$
\end{proof}

The complement of $\mathcal U$ is the complex line of parabolic Higgs fields with underlying holomorphic bundle $\mathcal V=\mathcal O(1)\oplus\mathcal O(-1).$ It is possible to explicitly glue this line to $\mathcal U$ and turn
the moduli space into a complex orbifold, see
for example \cite{Men}. We do not give more details of this, since our loop group methods only work on $\mathcal U$ as of now.

\subsection{Non-abelian Hodge correspondence in the strongly parabolic case}
In this section we recall Simpson's non-abelian Hodge correspondence \cite{Si1} in the case of rank two strongly parabolic Higgs bundles and non-trivial weights,
see also \cite{FMSW} or \cite{KiWi}.

The space of hermitian metrics with determinant one is naturally diffeomorphic to the hyperbolic 3-space.
Thus we can measure the distance between two metrics as the distance of the two corresponding points in the hyperbolic space using the metric $d$.  
Let $\mathcal P$ be a parabolic structure on a holomorphic rank two bundle $\mathcal V$ with trivial determinant bundle, $z$ be a local holomorphic coordinate centered at a singular 
point $p_j\in\Sigma$, and let $\ell_j\subset\mathcal V_{p_j}$ be the parabolic line and $\alpha_j \in (0, \tfrac{1}{2})$ be the corresponding parabolic weight. Choose   a local holomorphic frame $(s_1,s_2)$ of $\mathcal V$ on 
$U_j\ni p_j$ such that
\[s_1\wedge s_2=1\in H^0(U_j,\Lambda^2\mathcal V)=H^0(U_j,\mathcal O)\]
and
\[s_1(p_j)\in\ell_j.\]
Consider the hermitian metric $h_j$ given by
\[h_j=\begin{pmatrix} (z\bar z)^{\alpha_j}&0\\0&(z\bar z)^{-\alpha_j}\end{pmatrix}\]
with respect to the holomorphic frame $(s_1,s_2)$.
We call $h_j$ a model metric with respect to the parabolic structure at $p_j$. A hermitian metric $h$ on $\mathcal V$ over ${\Sigma\setminus\{p_1,\dots,p_n\}}$ is called {\em tame} at $p_j$ with respect to
the parabolic structure if and only if
\[d(h,h_j) \quad \text{is bounded on } U_j\setminus\{p_j\}.\]
Since any two model metrics have finite distance to each other the notion of tameness is independent of the choice of holomorphic frame with the above properties. 
\begin{definition}
Let $\mathcal P$ be a parabolic structure on a holomorphic vector bundle $\mathcal V$ over $\Sigma$
with singular divisor $p_1+\dots+p_n.$
A hermitian metric on a holomorphic bundle $\mathcal V$ over ${\Sigma\setminus\{p_1,\dots,p_n\}}$ is called {\em tame} with respect to $\mathcal P$
if it is tame at all of its singular points $p_j.$
\end{definition}
Given a tame hermitian metric $h$ and
a holomorphic structure $\bar\partial$ we denote its Chern connection on $\Sigma\setminus\{p_1,\dots,p_n\}$ by $D^h=D^{\bar\partial,h}.$
Similarly, for a strongly parabolic Higgs field $\Psi$ we denote its hermitian conjugate by $\Psi^*=\Psi^{*,h}\in\Gamma(\Sigma\setminus\{p_1,\dots,p_n\},\overline{K}\mathfrak{sl}(\mathcal V)).$
\begin{definition}
Let $(\mathcal P,\Psi)$ be a polystable strongly parabolic Higgs pair. A metric $h$ 
is called tame harmonic with respect to the Higgs pair if the metric is tame with respect to $\mathcal P$ and if the
connection
\[\nabla:=D^h+\Psi+\Psi^{*,h}\]
is flat.
\end{definition}
A tame harmonic metric gives a solution of Hitchin's self-duality equations over $\Sigma\setminus\{p_1,\dots,p_n\}$ with appropriate growth conditions at the punctures.
The notation of a (tame) harmonic metric is motivated by the fact that there is a direct link to harmonic maps into hyperbolic 3-space: let $U\subset
\Sigma\setminus\{p_1,\dots,p_n\}$  be open and
$(s_1,s_2)\colon U\to\mathrm{SL}(2,\C)$ be a parallel frame with respect to the flat connection $\nabla.$ Then, the
map \[\begin{pmatrix}h(s_i,s_j)\end{pmatrix}_{i,j}\colon U\to\{A\in\mathrm{SL}(2,\C)\mid \bar A^T=A \text{ and } A>0\}\cong \mathbb H^3\]
is a harmonic map to hyperbolic 3-space $\mathbb H^3.$ Globally, this gives an equivariant  harmonic map (with respect to the monodromy representation of the flat connection $\nabla$) from the universal covering
of $\Sigma\setminus\{p_1,\dots,p_n\}$ into the hyperbolic 3-space. We can now state the non-abelian Hodge correspondence for the parabolic case (pNAH).
\begin{theorem}\cite{Si2}\label{NAHSi}
For every polystable strongly parabolic Higgs pairs with singular divisor $D= p_1+\dots+p_n$ and parabolic weights
$\alpha=(\alpha_1,\dots,\alpha_n), \alpha_i\in(0,\tfrac{1}{2})$ there is a  tame harmonic metric $h$. The tame harmonic metric is unique if
the Higgs pair is stable.
This induces a bijection between the space of polystable strongly parabolic Higgs pairs with singular divisor $D$ and parabolic weights
$\alpha$ modulo isomorphisms and the space of totally reducible
flat logarithmic connections with singularities at $p_j$ of conjugacy class \eqref{eq:locmon} modulo gauge transformations 
by associating $$(\mathcal V,\mathcal P,\Psi)\mapsto \nabla:=D^h+\Psi+\Psi^{*,h}.$$
\end{theorem}
The parabolic structure induced by the logarithmic connection $\nabla=D^h+\Psi+\Psi^{*,h}$ is not 
isomorphic to the parabolic structure underlying the stable strongly parabolic Higgs field in general, but the parabolic weights are the same. Just as in the compact case, we obtain from a tame harmonic metric $h$
for a strongly parabolic Higgs pair $(\mathcal V,\mathcal P,\Psi)$ an associated $\C^*$ family of flat  connections
\begin{equation}\label{eq:asslambdafami}
\lambda\in\C^*\mapsto \nabla^\lambda:=D^h+\lambda^{-1}\Psi+\lambda\Psi^{*,h}.\end{equation}
It follows from Simpson's construction \cite{Si2} (see also \cite{Si21}) that for all $\lambda\in\C^*$, the connections $\nabla^\lambda$ extends naturally to logarithmic connections on $\Sigma$ with the same 
 parabolic weights as the initial parabolic structure $\mathcal P.$ In particular, the parabolic weights of $\nabla^\lambda$ are independent of $\lambda\in\C^*$. 
\subsubsection{Rational weights}\label{ssec:rat}
In the case of rational parabolic weights \[\alpha_1=\tfrac{l_1}{k_1},\dots,\alpha_n=\tfrac{l_n}{k_n}\in\mathbb Q\cap(0,\tfrac{1}{2}),\]
 Theorem \ref{NAHSi} is directly linked to
the non-abelian Hodge correspondence on compact Riemann surfaces through coverings of $\Sigma$, see \cite[Section 3 and Section 5]{NaSt} for details.
The underlying geometric idea is that the equivariant harmonic map to hyperbolic 3-space obtained from the tame 
harmonic metric associated to a polystable strongly parabolic Higgs pair is actually the equivariant harmonic map
associated to a polystable Higgs pair on some compact surface, see Figure \ref{fig:1}.
We will explain the details here only in the case of $\Sigma$ being the $4$-punctured sphere and parabolic weights 
$t=\alpha_j=\tfrac{l}{k}$, $l,k\in\N,$ $2l<k$, but these constructions also work in the general case of rational weights, see \cite{NaSt}.

\begin{figure}
\centering
\includegraphics[width=1\textwidth]{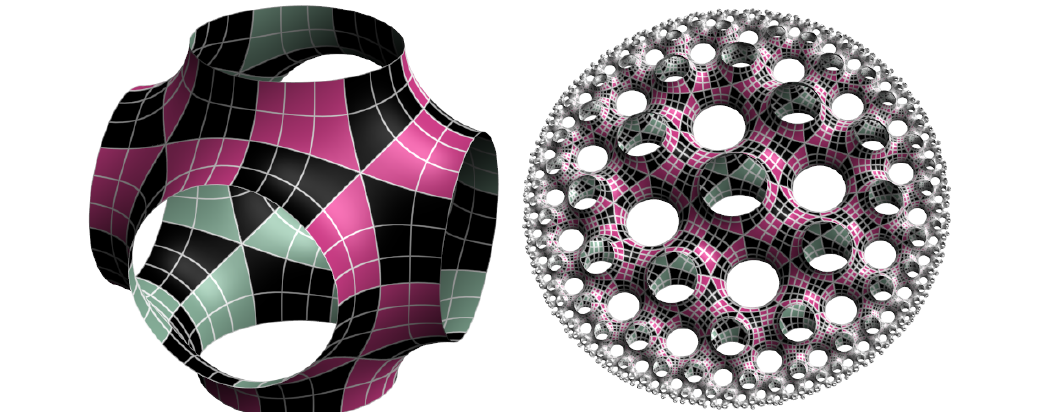}
\caption{
\footnotesize{Two views on an equivariant minimal surface in $\mathbb H^3$ obtained from a strongly parabolic nilpotent Higgs field
on $\C P^1$ with four singular points and parabolic weight $t=\alpha_j=\tfrac{1}{6}$. Image by Nick Schmitt.
}}
\label{fig:1}
\end{figure}

Consider the $k$-fold covering of $\C P^1$ defined by the equation
\begin{equation}\label{eq:sigma_k}\Sigma_k \colon y^k = \frac{(z-p_1)(z-p_3)}{(z- p_2)(z-p_4)},\end{equation}
which is totally branched over $p_1,\dots,p_4$,  and let $\pi=z\colon\Sigma_k\to\C P^1$ denote the covering map. Note that $\Sigma_k$ is a compact Riemann surface of genus $k-1$ and there is an action of
the finite abelian group $\Z_k$ on $\Sigma_k$ such that
 $\Sigma_k/\Z_k=\C P^1.$ 
 Let $\mathcal V\to\C P^1$ be the underlying holomorphic bundle of the parabolic structure $\mathcal P.$ By abuse of notation we also denote  its pull-back bundle by $\mathcal V=\pi^*\mathcal V\to\Sigma_k$. We construct a
 new holomorphic bundle $\wh{\mathcal V}\to\Sigma_k$ as follows. Let $x\colon U_j\subset\Sigma_k\to\C$ be a local holomorphic coordinate
 centered at $\pi^{-1}(p_j)$ satisfying $\sigma^*x=e^{\tfrac{2\pi i}{k}}x$ for a generator $\sigma$ of the $\Z_k$-action.
 Let $g_j\colon\mathcal V_{\mid U_j}\to \C^2$ be a holomorphic $\mathrm{SL}(2,\C)$ trivialisation of $\mathcal V$ over $U$
 such that $g_j(\ell_j)=\C(1,0)$, where $\ell_j$ is the parabolic line  at $\pi^{-1}(p_j)$. Then, $\wh{\mathcal V}$
 is the holomorphic bundle which is $\mathcal V$ over $\Sigma_k\setminus\pi^{-1}\{p_1,\dots,p_4\}$ and over $U_j$
the $\mathrm{SL}(2,\C)$ trivialisation $\wh{g}_j\colon \wh {\mathcal V} |_{U_j} \rightarrow \C^2$ is defined by
 \begin{equation}\label{def:twistbundle}(g_j\circ \wh{g}_j^{-1})|_{U_j\setminus \pi^{-1}(p_j)}=\begin{pmatrix} x^{-l}&0\\0&x^{l}\end{pmatrix}.\end{equation}
 Note that $\wh{\mathcal V}$ has trivial determinant bundle.
Moreover,  the pull-back  of the Higgs field $\pi^*\Psi$
is given by  \[g_j\circ\pi^*\Psi\circ g_j^{-1}=\begin{pmatrix} 0& a\\0&0\end{pmatrix} \frac{dx}{x}+A_1x^{k-1}dx+\text{ higher order terms in $x$}\]
 for some $a\in\C$ and $A_1,\dots\in\mathfrak{sl}(2,\C).$
Thus by conjugating with $g_j\circ \wh{g}_j^{-1}$ on $U_j\setminus \pi^{-1}(p_j)$ the Higgs field extends smoothly to $p_j$ and gives rise to $\wh\Psi\in H^0(\Sigma_k,K\mathfrak{sl}(\wh{\mathcal V})).$
By construction  $\wh{\mathcal V}$ has a natural non-trivial $\Z_k$-action
extending 
\[\wh\sigma\colon U_j \times \C^2\to U_j \times \C^2;\; (p,\begin{pmatrix} a\\b\end{pmatrix})\longmapsto (\sigma(p), \begin{pmatrix} e^{\tfrac{2\pi i l}{k}}a\\e^{\tfrac{-2\pi i l}{k}}b\end{pmatrix})\]
in the trivialisation $\wh g_j,$ and $\wh\Psi$ is equivariant:
\[\sigma^*\wh\Psi=\wh\sigma^{-1}\wh\Psi\wh\sigma.\] 
Furthermore, let $\nabla$ be a logarithmic connection with induced parabolic structure $\mathcal P$. Recall that
this implies that 
\[\text{res}_{p_j}\nabla_{\mid \ell_j}=\tfrac{l}{k}\Id_{\ell_j}.\]
A short computation similar to the case for a strongly parabolic Higgs field then shows that $\pi^*\nabla$ gives rise to an equivariant global holomorphic connection $\wh\nabla$
 on $\wh{\mathcal V}\longrightarrow \Sigma_k$
\[\sigma^*\wh\nabla=\wh \nabla.\wh\sigma.\]
We will refer to both operations $(\mathcal V,\mathcal P,\Psi)\longmapsto(\wh{\mathcal V},\wh\Psi)$ 
and $\nabla\longmapsto\wh\nabla$
as {\em twisted lifts}. Note that gauge equivalent strongly parabolic Higgs pairs (or respectively logarithmic connections)
have gauge equivalent twisted lifts. In other words, let  $\mathcal M(\Sigma_k)$ and
$\mathcal M(\C P^1)$ denote the moduli spaces of (parabolic) Higgs bundles or flat 
(logarithmic) connections on $\Sigma_k$ and the 4-punctured $\C P^1$, respectively, then
\begin{equation}\label{eq:pistartwistedliftmap}\wh{ }\; \colon\mathcal M(\C P^1)\longrightarrow\mathcal M(\Sigma_k)\end{equation}
is a well-defined and smooth map between these moduli spaces.
Every polystable strongly parabolic Higgs pair $(\mathcal P,\Psi)$ gives rise to a tame harmonic metric $h$, see Theorem \ref{NAHSi}. For rational weights this can be deduced directly from the fact that a stable and $\Z_k$-symmetric Higgs pair $(\wh{\mathcal V}, \wh \Psi)$ on $\Sigma_k$ gives rise to a harmonic metric $\wh h$ which is, by uniqueness of the solution, equivariant with respect to the $\Z_k$-action. Then twisting leads to an invariant harmonic metric $h$ on $\pi^* V$ descending to the quotient $4$-punctured sphere.
Using \eqref{def:twistbundle}, it can be shown that the metric $h$ on the 
quotient is tame.

In particular, we obtain a commutative non-abelian Hodge diagram 
 \begin{equation}\label{commNAH}
\begin{tikzcd}
\{(\mathcal P,\Psi)\mid \mathrm{polystable}\}/\sim \arrow[rr, "\mathrm{twisted\,lift}"] \arrow[dd, "\mathrm{pNAH}"] && \{(\wh{\mathcal V} ,\Psi)\mid \mathrm{polystable}\} /\sim\arrow[dd, "\mathrm{NAH}"]  \\
\\
\{\nabla\mid \mathrm{logarithmic\, and\, totally\, reducible}\}/\sim\arrow[rr, "\mathrm{twisted\,lift}"] &  &\{\wh\nabla\mid \mathrm{totally\, reducible}\}/\sim
\end{tikzcd}
\end{equation}
where $\sim$ denotes modulo isomorphism/gauge equivalence leading to
\begin{theorem}\label{thm:twistedlifttwistorlines}
Let $(\mathcal V,\mathcal P,\Psi)$ be a polystable strongly parabolic Higgs pair with associated family
of flat logarithmic connections $\nabla^\lambda = D^h + \lambda^{-1}\Psi + \lambda\Psi^*$. Then, the associated family $D^{\wh h} + \lambda^{-1}\wh\Psi + \lambda\wh \Psi^*$ of the twisted lift $(\wh{\mathcal V},\wh\Psi)$
is (gauge equivalent) to the twisted lift $\wh{\nabla^\lambda}$ of the family $\nabla^\lambda$.
\end{theorem}

\subsection{The parabolic Deligne-Hitchin moduli space}
Analogously  to the case of compact Riemann surfaces, Deligne-Hitchin moduli spaces can be defined in the strongly parabolic setup, see \cite{AlGo} and \cite{Si21} for more details. We  restrict ourselves again to the $\mathrm{SL}(2,\C)$ case with fixed local conjugacy classes.
\begin{definition}
Let $\Sigma$ be a compact Riemann surface and $p_1,\dots,p_n\in\Sigma$ be pairwise distinct points.
A  parabolic $\mathrm{SL}(2,\C)$ $\lambda$-connection with singular divisor $D=p_1+\dots+p_n$ is a quadruple 
$(\lambda,\mathcal V,\mathcal P,\mathcal D)$ consisting of a holomorphic $\mathrm{SL}(2,\C)$ bundle $\mathcal V$ over $\Sigma$ with a trace-free parabolic structure $\mathcal P$ and holomorphic trace-free $\lambda$-connection $\mathcal D$
on $\mathcal V\mid_{\Sigma\setminus\{p_1,\dots,p_n\}}$ such that
with respect to any local holomorphic frame of $\mathcal V$ around $p_j,$ the $\lambda$-connection 1-form of $\mathcal D$ is meromorphic with first order pole at $p_j$ only, and such that the quasiparabolic line $\ell_j$ at $p_j$ is an eigenline of the residue $\text{res}_{p_j}\mathcal D$
with eigenvalue $(\lambda\alpha_j)$, where $\alpha_j$ is the parabolic weight at $p_j.$ 
 \end{definition}
 For $\lambda\neq0$, $\tfrac{1}{\lambda}\mathcal D+\bar\partial^{\mathcal V}$ is a logarithmic connection with parabolic structure $\mathcal P$. For $\lambda=0,$ $\mathcal D$ is a strongly parabolic Higgs field for the parabolic structure $\mathcal P.$
 
 A parabolic $\lambda$-connection is stable (semi-stable) if every $\mathcal D$-invariant holomorphic line subbundle has negative (non-positive) parabolic degree, and unstable otherwise. As before, we also have the notion of polystability and for $\lambda\neq0,$ there are no unstable parabolic $\lambda$-connections.
 
 An isomorphism
between two parabolic $\lambda$-connections $(\lambda,\mathcal V_1,\mathcal P_1,\mathcal D_1)$ and $(\lambda,\mathcal V_2,\mathcal P_2,\mathcal D_2)$ is given by a $\mathrm{SL}(2,\C)$ gauge transformation $g$ between the two parabolic structures  $(\mathcal V_1,\mathcal P_1)$ and $(\mathcal V_2,\mathcal P_2)$ such that  $\mathcal D_2\circ g=g\circ \mathcal D_1.$  The only automorphisms of a stable parabolic $\lambda$-connection are $\pm\Id.$

In order to construct a moduli space, we fix the Riemann surface $\Sigma$, the divisor $D$ and the parabolic weights, but neither the holomorphic bundle nor the quasiparabolic lines. Then, the parabolic Hodge moduli space is the space of all polystable $\lambda$-connections with $\lambda \in \C$ modulo isomorphism. It is denoted by $\mathcal M_{Hod}^{D,\alpha_1,\dots,\alpha_n}(\Sigma)$, or by 
$\mathcal M_{Hod}^{par}(\Sigma)$ for short. It is a smooth complex manifold at its stable locus, and it  fibers over 
$\C.$

Let $\nabla$ be a logarithmic $\mathrm{SL}(2,\C)$ connection on $\Sigma$ with singular divisor $D$
and induced parabolic structure $\mathcal P.$
Let $\bar\Sigma$ be the complex conjugate Riemann surface, with divisor $\bar D=p_1+\dots+p_n.$ Then $\nabla$
is a flat connection on $\bar\Sigma\setminus\{p_1,\dots,p_n\}$. By Deligne extension, there is a logarithmic $\mathrm{SL}(2,\C)$ connection $\widetilde\nabla$ on $\bar\Sigma$ with the same parabolic weights as $\nabla,$ which is gauge equivalent to $\nabla$ on the smooth surface
$\Sigma\setminus\{p_1,\dots,p_n\}=\bar\Sigma\setminus\{p_1,\dots,p_n\}.$
More explicitly, choose a local holomorphic coordinate around $p_j \in \Sigma$ and a trivialization of $\mathcal V$
such that the connection $\nabla$ is given by
 $$\nabla= d+ \begin{pmatrix} \alpha_j&0\\0&-\alpha_j\end{pmatrix}\tfrac{dz}{z},$$  
and choose a gauge in this trivialization given by
 $g_j=\text{diag}((z\bar z)^{-\alpha_j},(z\bar z)^{\alpha_j})$, then
 the logarithmic connection $\wt \nabla$ on $\bar\Sigma$ is given by
 \[\wt \nabla = \nabla.g_j = d+ \begin{pmatrix} -\alpha_j&0\\0&\alpha_j\end{pmatrix}\tfrac{d\bar z}{\bar z}\]
around $\bar p_j \in \bar\Sigma$ in the same trivialization. 
  It is easy to check that this gives a well-defined map
 from the moduli space of logarithmic connections $\mathcal M_{dR}^{D,\alpha_1,\dots,\alpha_n}(\Sigma)$  on $\Sigma$ to the moduli space of logarithmic connections $\mathcal M_{dR}^{\bar D,\alpha_1,\dots,\alpha_n}(\bar\Sigma) $  on 
 $\bar\Sigma.$ Furthermore, this map can be extended to $\lambda \in \C_*$ to identify the Hodge moduli spaces of $\Sigma$ and $\bar \Sigma$ restricted to $ \C^*$ 

\[\mathcal G\colon\mathcal M_{Hod}^{par}(\Sigma)\mid_{\C^*}=\C^*\times \mathcal M_{dR}^{D,\alpha_1,\dots,\alpha_n}(\Sigma) \,
 \longrightarrow \,\C^*\times \mathcal M_{dR}^{\bar D,\alpha_1,\dots,\alpha_n}(\bar\Sigma)=\mathcal M_{Hod}^{par}(\bar\Sigma)\mid_{\C^*}.\]
 
As in the smooth case, $\mathcal G$ maps the stable (and smooth) locus to the stable (and smooth) locus. In this way, we obtain the
 parabolic Deligne-Hitchin moduli space
\[\mathcal M_{DH}^{par}(\Sigma,D,\alpha_1,\dots,\alpha_n)=\mathcal M_{Hod}^{par}(\Sigma)\cup_\mathcal G \mathcal M_{Hod}^{par}(\bar\Sigma)\longrightarrow\C P^1.\]
If $\alpha_1=\dots=\alpha_n=t$ we will denote the space by $\mathcal M_{DH}^{par}(\Sigma,D,\alpha_1,\dots,\alpha_n)=\mathcal M_{DH}^{par}(\Sigma,D,t).$ The parabolic Deligne-Hitchin moduli space is naturally equipped with a holomorphic $\C^*$ action covering 
 \[\mu\in\C^*\mapsto (\lambda\in\C P^1\mapsto \mu\lambda\in\C P^1).\]
 As before, we denote the automorphism obtained by multiplying with $\mu=-1$ by $N.$ Furthermore, 
 there exists a natural real structure $C$.  In order to define $C$ we recall that the underlying complex rank 2 vector bundle has a fixed $C^\infty$ trivialization $V=\Sigma\times\C^2$ with complex conjugation $c\colon V\to \bar V\cong V.$
For a linear differential operator $A$ let $\bar A:=c^{-1}\circ A\circ c.$ Thus if $A$ is complex linear,
 $\bar A$ is also complex linear but in general not gauge equivalent to $A$. If $\bar\partial$ is a holomorphic structure on $V\to\Sigma$, then $\overline{\bar\partial}$ is a holomorphic structure on $V\to\bar\Sigma.$  
  \begin{lemma}\label{lem:cmap}
 If $(\lambda,\bar\partial, \mathcal P,\mathcal D)$ is a parabolic $\lambda$-connection on $\Sigma$ 
 with singular divisor $D=p_1+\dots+p_n$, then
 \[(\bar\lambda,\overline{\bar\partial}, \bar{\mathcal P}, \overline{\mathcal D})\]
 is a parabolic $\lambda$-connection on $\bar\Sigma$, where $ \bar{\mathcal P}$ is given by the quasiparabolic lines 
 $\bar\ell_j=c(\ell_j)$ and the
 parabolic weights $\bar\alpha_j=\alpha_j.$
 
 If two parabolic $\lambda$-connections are isomorphic via a gauge $g$ then their complex conjugate 
 $\lambda$-connections on $\bar\Sigma$ are isomorphic via the gauge $c^{-1}\circ g\circ c.$ In particular, there
 exists an anti-holomorphic involution $C$ of $\mathcal M_{DH}^{par}$ covering $\lambda\mapsto\bar\lambda^{-1}$ and commuting
  with $N.$
 \end{lemma}
 \begin{example}
 Let $(\lambda,\bar\partial, \mathcal P,\mathcal D)$ be a parabolic $\lambda$-connection on $\Sigma$  with $\lambda\in S^1$ such that $\bar\partial+\tfrac{1}{\lambda}\mathcal D$ is unitary. Then, $[\lambda,\bar\partial, \mathcal P,\mathcal D]\in \mathcal M_{DH}^{par}$
 is a fixed point of $C.$
 \end{example}
 
For a given polystable strongly parabolic Higgs pair $p=(\mathcal P,\Psi)$ together with its tame harmonic metric $h$,
the associated family of flat connections \eqref{eq:asslambdafami} gives rise to a holomorphic section $s_p$ of
$\mathcal M_{DH}^{par}$ called twistor line, see
 \cite[Theorem 2.7]{Si21} for details. With respect to the real structure $\mathcal T=CN=NC$ this section is real, i.e., $\mathcal T(s_p(-\bar\lambda^{-1}))=s_p(\lambda)$
 for all $\lambda\in\C^*$.

We now discuss the necessary and sufficient conditions of a real holomorphic section $s$ of  $\mathcal M_{DH}^{par}$ to be a twistor line. 
This is motivated  by (and analogous to) the compact case described in Section \ref{pre}.

\begin{lemma}\label{lem:sufsec}
Let $\epsilon>0$.
Let $\lambda\in \D_{1+\epsilon}^*\mapsto \nabla^\lambda$ be
a holomorphic family of irreducible  logarithmic $\mathrm{SL}(2,\C)$ connections with first order pole at $\lambda = 0$ and $\Psi= $res$_{\lambda = 0} \nabla^\lambda \in \Gamma(K_\Sigma \mathfrak{sl} (\mathcal V(\lambda)))$ on a family of holomorphic bundles $\mathcal V(\lambda)\to\Sigma$ with $\lambda$-independent parabolic divisor $p_1+\dots+p_n$
and parabolic weights $\alpha_j\in (0,\tfrac{1}{2})$. 

Assume that there exists $g=g(\lambda)\colon\Sigma\setminus\{p_1,\dots,p_n\}\to \Lambda\mathrm{SL}(2,\C)$
 such that
 \begin{equation}\label{eq:realityconditionpara}\overline{\nabla^{-\bar\lambda^{-1}}}=\nabla^\lambda.g(\lambda)\,.\end{equation}
 Then  $\nabla^\lambda$ gives rise to a
real holomorphic section $s$ of the parabolic Deligne-Hitchin moduli space.
\end{lemma}
\begin{proof}
Note that $\lambda\mapsto\nabla^\lambda$ gives rise to a local section of $\mathcal M_{DH}^{par}$ over 
$\D_{1+\epsilon}^*$ 
via
\[\D_{1+\epsilon}^*\times \mathcal M_{dR}^{D,\alpha_,\dots,\alpha_n}\subset\mathcal M_{Hod}^{par}(\Sigma)\mid_{\C^*}\subset
\mathcal M_{Hod}^{par}(\Sigma)\subset \mathcal M_{DH}^{par}(\Sigma).\]
We first extend this section to $\lambda=0.$ 
Consider the holomorphic structure $\bar\partial_{\mathcal V}$.
Then, $\Psi$ is meromorphic with at most first order poles at $p_1,\dots,p_n$. 
By assumption the eigenvalues $\pm\alpha_j$ of res$_{p_j}\nabla^\lambda$ are independent
of $\lambda.$
This implies that res$_{p_j}\Psi$ 
is nilpotent for all $j= 1, ..., n$. We define the quasiparabolic line at $p_j$ as follows: if $\text{res}_{p_j}(\Psi)\neq0$, then $\ell_j:=\text{ker}(\text{res}_{p_j}(\Psi)),$ else   we define the line to be $\ell_j:=\text{Eig}((\text{res}_{p_j}\nabla)\mid_{\lambda=0},\alpha_j).$
This gives rise to a parabolic structure $\mathcal P$ on $\mathcal V$ with 
$\Psi$ being a compatible strongly parabolic Higgs field. To extend the section to $\lambda= 0,$ we need to ensure that the strongly parabolic Higgs pair $(\mathcal P, \Psi)$ is polystable.
\begin{remark}
In this paper,  all $\D^*_{1+\varepsilon}$-families of logarithmic connections considered will  induce stable strongly parabolic Higgs pairs at $\lambda= 0$. Thus there is no further gauge necessary to obtain a proper section of the Deligne-Hitchin moduli space.
\end{remark}

Assume the parabolic Higgs pair $(\mathcal P, \Psi)$  is unstable, then there exists an invariant holomorphic line bundle $L$ with respect to $\bar\partial_{\mathcal V}$ (on $\Sigma$) of positive parabolic degree. Consider a complementary complex line bundle $\widetilde L$, which is not holomorphic with respect to $\bar\partial_{\mathcal V}$ in general. Define the holomorphic family of gauge transformations
by $\lambda\in\C^*\mapsto \wt g(\lambda)=\text{diag}(1,\lambda)$ with respect to the $C^\infty$ decomposition $V=L\oplus \widetilde L.$ Then, it can be shown analogously to  \cite[Section 1.4]{HH} that the family of logarithmic connections
\begin{equation}\label{eq:twist}\widetilde\nabla^\lambda:=\nabla^\lambda.\wt g(\lambda)=\lambda^{-1}\widetilde\Psi+\widetilde\nabla+ \text{higher order terms in } \lambda\end{equation} 
is gauge equivalent to $\nabla^\lambda$ for $\lambda\neq 0$ and gives rise to a stable parabolic structure at $\lambda = 0$ with strongly parabolic Higgs field $\wt \Psi.$

If the parabolic structure $(\mathcal P , \Psi)$ is semi-stable but not polystable, we can replace the Higgs pair at $\lambda = 0$ by a polystable pair in the same gauge orbit. (This case does  not lead to twistor lines, but it does neither occur for the real holomorphic sections constructed by in this paper. In fact, either the parabolic Higgs field is
non-vanishing and induces a stable parabolic Higgs pair by Lemma \ref{lem:stabhiggspar}, or the Higgs field  vanishes and the section of the parabolic Deligne-Hitchin moduli space is induced by a unitary logarithmic connection by Theorem \ref{blowuplimit} below.)

To extend the section to $\C P^1$ recall that the underlying $C^\infty$ trivialization of the holomorphic bundles $\mathcal V(\lambda)$ is of fixed type, so that the complex conjugation
$c$ is well-defined. Then, by Lemma \ref{lem:cmap} and \eqref{eq:realityconditionpara}, $s$ extends holomorphically to
$\{\lambda \in \C \cup \{\infty\} | |\lambda| >\tfrac{1}{1+\varepsilon}\}$, i.e., $s$ gives rise to a global holomorphic section of $\mathcal M_{DH}^{par}(\Sigma)\to\C P^1$, which is real by \eqref{eq:realityconditionpara}.
\end{proof}
We call the transformation $\nabla^\lambda\mapsto\widetilde\nabla^\lambda$ in \eqref{eq:twist} the {\em twist} of the family $\nabla^\lambda.$ The twist transformation can also be applied if the Higgs pair is stable. 
In fact, assume that $\nabla^\lambda$ is the associated family of a tame harmonic metric for some nilpotent strongly parabolic Higgs field, which can be interpreted as an equivariant conformal harmonic map $f$, i.e., a minimal surface, into the hyperbolic 3-space. Then, the twisted  family $\wt\nabla^\lambda$ corresponds to the associated family of the equivariant harmonic conformal Gauss map of $f$ into the deSitter 3-space, see \cite{HH}.

In the following, we will always assume, without lost of generality, that we start with a family of logarithmic connections $\nabla^\lambda$ inducing a stable (or polystable)
strongly parabolic Higgs pair, without applying the twist transformation first.
In particular, if $\nabla^\lambda=\lambda^{-1}\Psi+\nabla+\lambda\Psi_1+\dots$ such that the determinant of the strongly parabolic Higgs field $\Psi$ has only simple zeros or poles, the corresponding Higgs pair must be stable, as it cannot have a holomorphic $\Psi$-invariant line bundle.

\begin{proposition}\label{pro:nectwist}
Let  $s$ be the real holomorphic section of $\mathcal M_{DH}^{par}$ given by a family of irreducible logarithmic connections $\nabla^\lambda$ as in Lemma \ref{lem:sufsec} together with the gauge $g$ satisfying \eqref{eq:realityconditionpara}.
Then, necessary conditions for $s$ to be a twistor line are
\begin{itemize}
\item $g_x$ lies in the big cell for all $x\in\Sigma\setminus\{p_1,\dots,p_n\},$ i.e., $g_x=g_x^+g_x^-$ for some
$g_x^+\in \Lambda_+\mathrm{SL}(2,\C)$ and $g_x^-\in \Lambda_-\mathrm{SL}(2,\C),$ \\ \item  $g(\lambda)\overline{g(-\bar\lambda^{-1})}=-\mathrm{Id}.$\end{itemize}

\end{proposition}
\begin{proof}
If the family of logarithmic connections $\nabla^\lambda$ induces a twistor line, then $\nabla^\lambda$ is gauge
equivalent to the associated family of flat connections of a self-duality solution   \eqref{eq:asslambdafami} by a positive gauge $\wt g$, i.e.,  $\wt g_x \in  \Lambda_+\mathrm{SL}(2,\C)$ for all $x\in\Sigma\setminus\{p_1,\dots,p_n\}$. 
Since by Example \ref{exa:negative} associated families are negative sections and the gauge is given by \eqref{g=delta}, 
we obtain by \eqref{eq:realityconditionpara} and by irreducibility of $\nabla^\lambda$ that 
$\wt g^{-1}g\in\Lambda_-\mathrm{SL}(2,\C).$  The second condition can be derived as in the compact case, for details see \cite[Section 1.3]{HH}.
\end{proof}

The next proposition shows that being a twistor line is an open condition. 

\begin{proposition}\label{pro:sufftwist}
Let $s$ be a twistor line of $\mathcal M_{DH}^{par}(\Sigma,D,\alpha_1,\dots,\alpha_n)$ with rational parabolic weights $\alpha_j\in(0,\tfrac{1}{4})\cap \mathbb Q$. Then there exists an open neighborhood $U$ around $s$ in the space of the real holomorphic sections of $\mathcal M_{DH}^{par}(\Sigma,D,\alpha_1,\dots,\alpha_n)$ such that every $\widetilde s\in  U$ is a twistor line.
\end{proposition}
\begin{proof}
Consider the covering surface $\wh\Sigma\to\Sigma$ which branches over $p_1,\dots,p_n$ of appropriate order. Over $\wh\Sigma$ we obtain, after applying the twisted lift construction, a holomorphic family of smooth connections 
$$\wh\nabla^\lambda=\lambda^{-1}\wh\Psi+\wh\nabla+ \text{ higher order terms in }\lambda,$$
 see Theorem \ref{thm:twistedlifttwistorlines}, such that the induced Higgs pair at $\lambda = 0$ is stable. 
Since $s$ is a twistor line of  $\mathcal M_{DH}^{par}(\Sigma,D,\alpha_1,\dots,\alpha_n)$, the family $\wh\nabla^\lambda$ on $\wh\Sigma$ gives rise to a twistor line of $\mathcal M_{DH}(\wh\Sigma)$ by Theorem \ref{thm:twistedlifttwistorlines} and the gauge $g$ with 
$$\wh\nabla^\lambda.g=\overline{\wh\nabla^{-\bar\lambda^{-1}}}$$ 
possesses a global  corresponding loop group factorization $g=g^+g^-$ on $\wh\Sigma.$

Let $\widetilde s$ be another real holomorphic section of $\mathcal M_{DH}^{par}(\Sigma,D,\alpha_1,\dots,\alpha_n)$ lying in an appropriate open neighborhood of $s$. Then its twisted lift $\wh{\widetilde s}$ is a holomorphic section of $\mathcal M_{DH}(\wh\Sigma)$ which can be represented
by a family $\wh {\wt\nabla}^\lambda$ of flat and smooth connections on $\wh\Sigma.$ The family $\wh {\wt\nabla}^\lambda$ can be chosen in a way such that for all $\lambda \in K$, where $K$ is a compact subset of $\C^*$ containing the unit circle, $\wh {\wt\nabla}^\lambda$
lies in an appropriate open neighborhood of $\wh\nabla^\lambda$ in the space of flat connections. Consequently, the
family of $\mathrm{SL}(2,\C)$ gauge transformations $\wt g$ satisfying 
$$\wh {\wt\nabla}^\lambda.\wt g(\lambda)=\overline{\wh {\wt\nabla}^{-\bar\lambda^{-1}}}$$ lies in an open neighborhood of $g$ for all $\lambda\in \{\lambda\in\C\mid \lambda\in K,-\bar\lambda^{-1}\in K\}.$
By Theorem \ref{thm:birk}, we obtain that $\wt g$ also lies in the big cell, and using the construction in Section \ref{sec:reconstruction} we obtain $\wh {\wt s}$ is a twistor line of $\mathcal M_{DH}(\wh\Sigma)$. Then Theorem \ref{thm:twistedlifttwistorlines} gives that $\wt s$ is also a twistor line of
$\mathcal M_{DH}^{par}(\Sigma,D,\alpha_1,\dots,\alpha_n)$ as claimed.
\end{proof}
\begin{remark}\label{rem:sufftwist}
Proposition \ref{pro:sufftwist} 
also holds for non-rational parabolic weights. More generally, the same conclusion holds if we allow the
(positive) parabolic weights as well as the conformal structure of $\Sigma$ and the singular points $p_1,\dots,p_n$
to vary. 
This follows from the results of \cite{Si} combined with \cite{KiWi}.
In fact, as a generalization of Theorem \ref{connectedcomponent}, the space of twistor lines is open and closed in the space of real  holomorphic sections.
\end{remark}

\subsection{The symplectic form on the space of logarithmic  connections}
The symplectic form on the moduli space of flat connections over compact surfaces \cite{AB,Gold} has been generalized
to moduli spaces of flat connections with prescribed local conjugacy classes over punctured Riemann surfaces,
see for example  \cite{AlMa} or \cite{Audin}. We  provide a short self-contained account to the construction of this symplectic form, and describe its relationship to the symplectic form on a appropriate compact covering surface
in the case of rational parabolic weights.

Let $\Sigma$ be a compact Riemann surface and $p_1,\dots,p_n\in\Sigma$ be pairwise distinct points. Fix $\mathrm{SL}(2,\C)$
conjugacy classes
\[\mathcal C_1,\dots,\mathcal C_n\]
of diagonal matrices $C_1,\dots,C_n\in\mathfrak{sl}(2,\C)$ at the punctures such that the eigenvalues of each $C_j$ are contained in $ (0,\tfrac{1}{2}).$ Let $z_j$ be a centered holomorphic coordinate at $p_j$ for $j=1,\dots,n$.
For $\mathcal C_1,\dots,\mathcal C_n$ fixed let $\mathcal A$ denote the infinite dimensional space of all flat $\mathrm{SL}(2,\C)$ connections $\nabla$
on $\Sigma^0:=\Sigma\setminus\{p_1,\dots,p_n\}$ which are of the form   \begin{equation}
\label{nfconn}
\nabla=A_j\frac{dz_j}{z_j}+\text{smooth connection},\end{equation}
where $z_j$ is a centered coordinate around $p_j$ and $A_j\in\mathcal C_j$.
\begin{lemma}\label{lem:tnablaxia}
Let $X\in\Omega^1(\Sigma^0,\mathfrak{sl}(2,\C))$ be a tangent vector to $\nabla\in\mathcal A.$  Then, there exists
smooth $\xi\in\Gamma(\Sigma,\mathfrak{sl}(2,\C))$ and $\wh X\in\Omega^1(\Sigma,\mathfrak{sl}(2,\C))$
such that 
\[X=d^\nabla \xi+\wh X \and  d^\nabla\wh X=0\]
on the punctured surface $\Sigma^0.$
\end{lemma}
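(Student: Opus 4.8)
The plan is to produce $\xi$ by cancelling the polar part of $X$ at each puncture with a gauge direction $d^\nabla\xi$, and then to show that the correction $\hat X := X - d^\nabla\xi$ both extends smoothly across the punctures and is automatically $d^\nabla$-closed. Two features of a tangent vector $X$ to $\mathcal A$ at $\nabla$ drive the argument. First, since $\mathcal A$ consists of flat connections, differentiating $F^{\nabla_t}=0$ along a path $\nabla_t=\nabla+tX+O(t^2)$ in $\mathcal A$ gives the linearized flatness equation $d^\nabla X=0$ on $\Sigma^0$. Second, the residue of $X$ at $p_j$ is constrained: writing the residues of the path as $A_j(t)=g_j(t)A_jg_j(t)^{-1}\in\mathcal C_j$ with $g_j(0)=\mathrm{Id}$, the residue $R_j$ of $X$ equals $\dot A_j(0)=[\dot g_j(0),A_j]$, so that $R_j\in\operatorname{im}(\operatorname{ad}_{A_j})$. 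This membership is exactly what makes the simple pole removable by a gauge direction.

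Next I would build $\xi$ locally and patch. Because $R_j\in\operatorname{im}(\operatorname{ad}_{A_j})$, I can choose a constant matrix $\xi_j\in\mathfrak{sl}(2,\C)$ solving $[A_j,\xi_j]=R_j$; indeed for $A_j=\mathrm{diag}(a,-a)$ with $a\neq0$ the operator $\operatorname{ad}_{A_j}$ maps isomorphically onto the off-diagonal matrices, which contain $R_j$. Choosing cut-off functions $\chi_j$ with pairwise disjoint supports in centered coordinate disks around the $p_j$ and $\chi_j\equiv1$ near $p_j$, I set $\xi:=\sum_j\chi_j\,\xi_j\in\Gamma(\Sigma,\mathfrak{sl}(2,\C))$, which is globally smooth and equals the constant $\xi_j$ in a neighbourhood of $p_j$.

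Finally I would verify the two assertions about $\hat X=X-d^\nabla\xi$. Near $p_j$ the connection form is $A_j\,dz_j/z_j+(\text{smooth})$ and $\xi\equiv\xi_j$ is constant, so $d^\nabla\xi=[A_j,\xi_j]\,dz_j/z_j+(\text{smooth})=R_j\,dz_j/z_j+(\text{smooth})$; subtracting this from $X=R_j\,dz_j/z_j+(\text{smooth})$ cancels the simple pole exactly, so $\hat X$ extends to a smooth element of $\Omega^1(\Sigma,\mathfrak{sl}(2,\C))$. Closedness is then formal: on $\Sigma^0$ one has $d^\nabla\hat X=d^\nabla X-(d^\nabla)^2\xi=-[F^\nabla,\xi]=0$, using $d^\nabla X=0$ and the flatness $F^\nabla=0$ on $\Sigma^0$. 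I expect the only genuinely delicate point to be the smoothness of $\hat X$ at the punctures rather than its mere boundedness: a generic smooth $\xi$ would produce bounded-but-non-smooth remainders of the form $(\bar z_j/z_j)\,dz_j$ coming from the anti-holomorphic jet of $\xi$ paired against $\operatorname{ad}_{A_j}$, and the point of taking $\xi$ locally constant near each $p_j$ (equivalently, with vanishing $\bar\partial$-jet there) is precisely to force the cancellation to be exact, so that no such terms survive.
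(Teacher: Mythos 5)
Your proof is correct and follows the same route as the paper: the residue of $X$ at $p_j$ is tangent to the fixed conjugacy class, hence equals $[A_j,\xi_j]$ for some $\xi_j\in\mathfrak{sl}(2,\C)$, and one sets $\hat X=X-d^\nabla\xi$ for a global smooth $\xi$ matching $\xi_j$ at the punctures, with closedness coming from $d^\nabla\hat X=d^\nabla X-[F^\nabla,\xi]=0$ on $\Sigma^0$. Your one refinement is in fact needed: the paper only imposes $\xi(p_j)=\xi_j$, which (as you observe) merely makes $\hat X$ bounded, leaving possible non-smooth remainders of the form $(\bar z_j/z_j)\,dz_j$ from the anti-holomorphic jet of $\xi$, whereas taking $\xi$ locally constant near each puncture via cut-offs, as you do, makes $\hat X$ genuinely smooth across the punctures.
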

\begin{proof}
Since $X$ is a tangent vector at $\nabla$ to the infinite dimensional space of flat connections, we have in particular $d^\nabla X = 0.$ Moreover, 
 $X$ preserves the form  \eqref{nfconn}, and  therefore we can write
\[X=[A_j,\xi_j]\frac{dz_j}{z_j}+\widetilde X\]
around $p_j$ for appropriate $\xi_j\in\mathfrak{sl}(2,\C)$ and a smooth $\widetilde X$ on $\Sigma.$ Let 
$\xi\in\Gamma(\Sigma,\mathfrak{sl}(2,\C))$ be a section with $\xi(p_j)=\xi_j.$
Then the splitting \[X=d^\nabla\xi+(X-d^\nabla\xi)\] is of the required form.
\end{proof}

\begin{lemma}\label{lem:compsympbound}
Let $\nabla\in \mathcal A$ with $A_j$ as in \eqref{nfconn} and consider two tangent vectors $$X=d^\nabla \xi+\wh X, \quad Y=d^\nabla \mu+\wh Y\in T_{\nabla}\mathcal A$$
for $\xi,\mu\in\Gamma(\Sigma,\mathfrak{sl}(2,\C))$ and $\wh X,\wh Y\in\Omega^1(\Sigma,\mathfrak{sl}(2,\C))$ as in Lemma \ref{lem:tnablaxia}.
Then
\[\tfrac{1}{8\pi }\int_{\Sigma^0}\tr\left (X\wedge Y\right)=\tfrac{1}{8\pi }\int_\Sigma\tr\left(\wh X\wedge\wh Y\right) - i\sum_{j=1}^n 
\frac{1}{8\tr(A_j^2)} \tr(A_j[\text{Res}_{p_j}(X),\text{Res}_{p_j}(Y)]\,]  ).\]
\end{lemma}
\begin{proof}
Consider the punctured Riemann surface $\Sigma^0$ and centered holomorphic coordinates $z_j$ at the punctures $p_j.$ For $t>0$ small let
$$\gamma^j_t\colon S^1\longrightarrow \Sigma^0; \quad e^{2\pi i \varphi}\longmapsto (z_j)^{-1}(te^{2\pi i \varphi}).$$

When splitting $X, Y$ according to
$$X=d^\nabla \xi+\wh X, \quad Y=d^\nabla \mu+\wh Y$$
we obtain
\[\int_{\Sigma^0}\tr\left (X\wedge Y\right) = \int_{\Sigma^0}\tr\left (\wh X\wedge \wh Y\right) + \tr\left (d^\nabla \xi \wedge \wh Y\right) + \tr\left (\wh X\wedge d^\nabla \mu \right) +\tr\left (d^\nabla \xi \wedge d^\nabla \mu\right).\]
For the second term in the above expression we have
\begin{equation}
\begin{split}
\int_{\Sigma^0}\tr\left (d^\nabla \xi \wedge \wh Y\right) &=\int_{\Sigma^0}d\,\tr(\xi \wh Y)-\int_{\Sigma^0}\tr(\xi (d^\nabla \wh Y ))\\
&=-\lim_{t\to 0} \sum_j\int_{\gamma^j_t}\tr(\xi \wh Y)=0
\end{split}
\end{equation}
since $d^\nabla \wh Y = 0$ and $\wh Y, \xi$ are both smooth on $\Sigma.$ Analogously, we have
\[\int_{\Sigma^0}\tr(\wh X\wedge d^\nabla \mu) =0.\]
For the last term we compute
\begin{equation}
\begin{split}
\int_{\Sigma^0}\tr(d^\nabla\xi\wedge d^\nabla \mu)&=\int_{\Sigma^0}d\,\tr(\xi d^\nabla \mu) =-\lim_{t\to 0} \sum_j\int_{\gamma^j_t}\tr(\xi d^\nabla\mu)\\
&=-\lim_{t\to 0} \sum_j\int_{\gamma^j_t}\tr(\xi [A_j,\mu] \frac{dz_j}{z_j})-\lim_{t\to 0} \sum_j\int_{\gamma^j_t}\tr(\text{something smooth})\\
&=-2\pi i\sum_j \tr(\xi(p_j) [A_j,\mu(p_j)] )=2\pi i\sum_j \tr(A_j[\xi(p_j) ,\mu(p_j)] )\\
&=-2\pi i\sum_j \frac{1}{2\,\tr(A_j^2)}\tr(A_j[\,[A_j,\xi(p_j)] ,[A_j,\mu(p_j)]\,] )\\
&=-2\pi i\sum_j \frac{1}{2\,\tr(A_j^2)}\tr(A_j[\text{Res}_{p_j}(X),\text{Res}_{p_j}(Y)]\,] )
\end{split}
\end{equation}
as claimed.\end{proof}

Define a holomorphic complex bilinear and skew-symmetric form on the (infinite dimensional) tangent space $T_{\nabla}\mathcal A$ by 
\begin{equation}\label{def:symf}
\begin{split}
\mathcal O \colon &T_{\nabla}\mathcal A\times T_{\nabla}\mathcal A\to \C\\
&(X,Y)\mapsto \tfrac{1}{8\pi }\int_{\Sigma^0}\tr(X\wedge Y)+ i\sum_j \frac{1}{8\,\tr((\text{Res}_{p_j}(\nabla))^2)}
\tr(\, \text{Res}_{p_j}(\nabla)[ \text{Res}_{p_j}(X),\text{Res}_{p_j}(Y)]\, ).
\end{split}
\end{equation}
From Lemma \ref{lem:compsympbound} we then obtain:
\begin{corollary}\label{cor:vanishing}
For $d^\nabla\xi,Y\in T_{\nabla}\mathcal A$ we have
\[\mathcal O(d^\nabla \xi,Y) = 0.\]
Therefore, the bilinear form $\mathcal O$ descends to a well defined holomorphic 2-form on the quotient of $\mathcal A$ by gauge transformations.
\end{corollary}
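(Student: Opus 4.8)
The plan is to reduce everything to the single pointwise identity $O(d^\nabla\xi,Y)=0$ for all $\xi\in\Gamma(\Sigma,\mathfrak{sl}(2,\C))$ and all $Y\in T_\nabla\mathcal A$; the descent statement then follows formally. Indeed, both pieces defining $O$ are built from the $\mathrm{Ad}$-invariant trace form, so $O$ is invariant under the adjoint action of the gauge group (gauge transformations act by conjugation on $X,Y$ and on the residues $\text{Res}_{p_j}(\nabla)=A_j$, leaving each summand unchanged). Since the tangent space to a gauge orbit at $\nabla$ is exactly $\{d^\nabla\xi:\xi\in\Gamma(\Sigma,\mathfrak{sl}(2,\C))\}$ and $O$ is skew-symmetric, the vanishing $O(d^\nabla\xi,\,\cdot\,)=0$ says that $O$ annihilates the vertical directions in both slots. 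Being horizontal and $\mathcal G$-invariant, $O$ is basic and descends to a well-defined holomorphic $2$-form on the quotient $\mathcal A/\mathcal G$, which is the asserted conclusion.

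To establish the vanishing I would take $X=d^\nabla\xi$ and use the \emph{trivial} splitting furnished by the first Lemma of this subsection, i.e.\ with $\hat X=0$ and $\xi$ itself serving as the global section. Writing $Y=d^\nabla\mu+\hat Y$ as there, the preceding Lemma then computes the bulk pairing: since $\hat X=0$ the smooth term $\int_\Sigma\tr(\hat X\wedge\hat Y)$ drops out, leaving
\[\int_{\Sigma^0}\tr(d^\nabla\xi\wedge Y)=-2\pi i\sum_{j=1}^n\tr\big(A_j[\xi(p_j),\mu(p_j)]\big).\]
Thus the whole content is to show that the residue correction term in the definition of $O$ reproduces exactly the opposite of this boundary contribution.

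For this I would use $\text{Res}_{p_j}(\nabla)=A_j$ together with the residues of the tangent vectors at $p_j$ and the cyclic identity $\tr(A[B,C])=\tr([A,B]C)$, valid for all $A,B,C\in\mathfrak{sl}(2,\C)$, to identify the correction term with $+2\pi i\sum_j\tr(A_j[\xi(p_j),\mu(p_j)])$; adding it to the displayed integral yields $O(d^\nabla\xi,Y)=0$. Along the way one must record that the local quantity $\tr(A_j[\xi(p_j),\mu(p_j)])$ depends only on the residues $\text{Res}_{p_j}(X)$ and $\text{Res}_{p_j}(Y)$, and not on the residual freedom in the choice of the local gauge parameters: replacing $\xi(p_j)$ by $\xi(p_j)+c$ with $c$ in the centralizer of $A_j$ changes the trace by $\tr(A_j[c,\mu(p_j)])=\tr([A_j,c]\mu(p_j))=0$ because $A_j$ and $c$ commute. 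This makes both sides honest functions of the tangent vectors $X,Y$.

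The main obstacle is precisely this last matching step at the punctures: one must fix the correct convention for the residue of a connection deformation $X=\delta\nabla$ at $p_j$ (the infinitesimal datum $\xi(p_j)$ versus $[A_j,\xi(p_j)]$) and then verify, after inserting $A_j=\text{Res}_{p_j}(\nabla)$, that the algebraic term carries exactly the sign and normalization needed to cancel $2\pi i\sum_j\tr(A_j[\xi(p_j),\mu(p_j)])$. Everything else — the $\mathcal G$-invariance of $O$, the reduction to horizontality, and the passage to the quotient — is formal once this pointwise Lie-algebra bookkeeping has been carried out.
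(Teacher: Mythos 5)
Your proposal is correct and is essentially the argument the paper intends: the corollary is a direct consequence of the preceding Lemma applied with the trivial splitting $\hat X=0$ for $X=d^\nabla\xi$, so that the bulk integral equals $-2\pi i\sum_j\tr\left(A_j[\xi(p_j),\mu(p_j)]\right)$, which is exactly cancelled by the residue term of $O$ once that term is read (as the paper's subsequent Remark indicates) as the Kirillov pairing $\tr\left(A_j[\xi(p_j),\mu(p_j)]\right)$ of the tangent vectors $\mathrm{Res}_{p_j}(X)=[A_j,\xi(p_j)]$ and $\mathrm{Res}_{p_j}(Y)=[A_j,\mu(p_j)]$; descent then follows from conjugation-invariance of the trace together with skew-symmetry. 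Your explicit check that the pairing depends only on the residues (via $\tr(A_j[c,\mu(p_j)])=0$ for $c$ centralizing $A_j$) and your flagging of the residue convention are exactly the points left implicit in the paper.
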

It can be shown that $\mathcal O$ is indeed a symplectic form. 
\begin{remark}
If the infinitesimal deformation of the flat connection preserves the holomorphic structure, i.e., the tangent vector fields $X,$ $Y$ are both meromorphic, then the first summand of $\mathcal O$  in \eqref{def:symf}vanishes and only the residue terms remain.
For $A\in \mathcal C_j$, the complex skew bilinear form
\[X=[A,\xi],Y=[A,\mu]\in T_{A} \mathcal C_j\longmapsto \tr(A[\xi,\mu])=-\frac{1}{2\tr(A^2)}\tr(A[X,Y])\]
is well-defined and known as the Kirillov symplectic form on the adjoint orbit $\mathcal C_j$.
\end{remark}

\subsubsection{Rational weights}
Assume that the parabolic weights are all rational.
Then, the symplectic form $\mathcal O$ and the Goldman symplectic form coincide up to scaling and taking 
the twisted lift.  We give the proof in the case of the 4-punctured sphere, with rational weights
$\alpha_1=\dots=\alpha_4=\tfrac{l}{k}.$ As before, consider the $k$-fold covering $\Sigma_k\to\C P^1$ given by \eqref{eq:sigma_k} which totally branches over the singular points.

\begin{proposition}\label{pro:sympupdown}
Let $\mathcal M(\C P^1)$ be the space of logarithmic connections  on $\C P^1$ with singular points $p_1,\dots,p_4$
and rational weights $\alpha_1=\dots=\alpha_4=\tfrac{l}{k}$  equipped with its symplectic form $\mathcal O$.
Let $\pi\colon\Sigma_k\to\C P^1$ be the $k$-fold covering given by \eqref{eq:sigma_k} which totally branches over the singular points, and let $\mathcal M(\Sigma_k)$ be the moduli space of flat connections on $\Sigma_k$ with Goldman symplectic form $\Omega.$
Then \[32\pi \,k\,\mathcal O=\Omega.\] 
\end{proposition}
\begin{proof}
Because of Corollary \ref{cor:vanishing} $\mathcal O$ is well-defined on the moduli space $\mathcal M(\C P^1)$, and for tangent vectors $X,Y,$ we can choose representatives  that vanish in an open set $U \subset \C P^1$ containing all the singular points.
Then, only the first summand in \eqref{def:symf}
contributes to $\mathcal O(X,Y).$ On the other hand, the 
pull-back of $\pi^*X$ and $\pi^*Y$ to $\Sigma_k$ vanish on $\pi^{-1}(U)$, and the tangent vectors $\wh X$ and $\wh Y$
of $\mathcal M(\Sigma_k)$
 are represented by $\pi^*X$ and $\pi^*Y$ in the  trivialization of the holomorphic bundle 
 $\wh{\mathcal V}\cong\mathcal V$ over $\Sigma_k\setminus\pi^{-1}\{p_1,\dots,p_4\}$. Since $\pi\colon\Sigma_k\to\C P^1$ is of degree $k,$ and due to the different scalings in \eqref{GoldmanO}
 and \eqref{def:symf}, the result follows.
\end{proof}
\begin{remark}\label{rem:sympupdown}
Proposition \ref{pro:sympupdown}  directly generalizes to $\lambda$-connections. As a consequence, we are able to compute
the twisted holomorphic  symplectic form \eqref{deftwistedOmega2} on the open
dense subset of $\mathcal M_{DH}(\C P^1,p_1+\dots+p_4,t)$ consisting of Fuchsian
$\lambda$-connections.
\end{remark}

\section{Initial conditions at $t=0.$}\label{Ansatz}
In this section we write down $\lambda$-dependent connection 1-forms on the 4-puncture sphere $\C P^1\setminus\{p_1,\dots,p_4\}$ depending on a parabolic weight $\alpha=t \in [0, \tfrac{1}{4})$ and a strongly parabolic Higgs field.
We then explicitly compute the initial conditions at $t=0$ which we will deform via an implicit function theorem argument in Section \ref{IFT} to obtain equivariant tame harmonic maps from $\C P^1\setminus\{p_1,\dots,p_4\}$ into $\H^3$ for $t>0$.

\subsection{The potential}
To construct real holomorphic sections explicitly, we restrict to the case where the underlying holomorphic structure is trivial for all
  $\lambda \in \mathbb D_a=\{\lambda\mid |\lambda|^2< a^2\}$ for some $a >1.$ 
This restriction is motivated by the fact that an
open and dense subset of the moduli space of strongly parabolic Higgs
field (respectively logarithmic connections) has trivial underlying holomorphic
bundle, see Proposition \ref{pro:classparahiggs} for strongly parabolic Higgs fields (and Lemma \ref{lemOK1} for
logarithmic connections).  In general (e.g. for large Higgs field with trivial underlying holomorphic structure), the associated family of flat connections does not have trivial underlying holomorphic bundle for all $\lambda$ in the punctured unit disc. On the other hand, 
for fixed weight $t$ and fixed compact subset $K$ of the $\lambda$-spectral plane, we will show that there is a constant $C$ such that for 
any strongly parabolic Higgs field with trivial underlying holomorphic structure and norm less than $C$, the associated logarithmic connection $\nabla^\lambda$ is Fuchsian for all $\lambda\in K\setminus\{0\}$.  This will follow by continuity of the 
solution depending on the parabolic Higgs data (with trivial underlying holomorphic bundle), and the fact that for vanishing Higgs field the associated family is constant (and Fuchsian).

 A {\em potential} is given by a holomorphic and complex linear loop algebra-valued 1-form
\[\eta\in\Omega^{1,0}(\C P^1\setminus\{p_1,\dots,p_4\},\Lambda \mathfrak{sl}(2,\C))\] with  
\[(\lambda \eta)\in \Omega^{1,0}(\C P^1\setminus\{p_1,\dots,p_4\},\Lambda_+\mathfrak{sl}(2,\C)),\]
where $\Lambda \mathfrak{sl}(2,\C)$ and $\Lambda_+\mathfrak{sl}(2,\C)$ are as in Section \ref{sec:loops} 
and $(\lambda \eta)$ denotes pointwise multiplication. 
Motivated by Lemma \ref{lem:sufsec} and its proof we call the residue at $\lambda=0$ 
\[\Psi=\eta_{-1}:=\text{Res}_{\lambda=0} (\eta)\]
the parabolic Higgs field of the potential $\eta$. 
On the $4$-punctured sphere we  further specialize to $\eta$ being a Fuchsian system  for every $\lambda \in \D_{a}\setminus\{0\}$, i.e., we consider potentials of the form

$$\eta =  \sum_{j = 1}^4 A_j \frac{dz}{z-p_j} \quad \text{ satisfying }\quad \sum_{j= 0}^4 A_j = 0$$

with $z$ being the homogenous coordinate of $\C P^1,$ $ (\lambda A_j) \in \Lambda_+\mathfrak {sl}(2, \C)$.  To be more explicit, the coefficients of $A_j$, as functions of $\lambda,$
are in the functional space $\cal{W}^{\geq -1}_{a}$ where for $l\in\Z$
\[\cal{W}^{\geq l}_{a}:=\left\{h=\sum_{k=l}^\infty h_k \lambda^k \;\text{ such that } \; \sum_{k=l}^\infty |h_k| a^{|k|} < \infty \right\}.\]
 For $l\geq0$ we obtain the space of absolutely convergent power series in the disk of radius $a>1$. We denote the subspace of convergent power series with vanishing constant term $h_0$ by $\cal{W}^{+}_{a}.$ Whenever the dependence on a particular $a$ is not important, we will omit this index. More generally, $\cal{W}_{a}$ denotes the space of convergent Laurent series on the annulus $\mathbb A_a   := \{\tfrac{1}{a^2}<|\lambda|^2< a^2\}.$
Every element  $h \in \mathcal W =\mathcal W_a$ can be decomposed into its positive part $h^+\in \mathcal W^+$, its constant part $h^0 := h _0$ and its negative part $h^-= h- h^+ -h^0.$

\begin{remark}
Even though the choice of $a>1$ is irrelevant for the application of the implicit function theorem for $t\sim0,$
we expect that for larger $t$ and general strongly parabolic Higgs field (with trivial underlying holomorphic structure), the associated family of flat connection is not Fuchsian
for all $\lambda$ satisfying $|\lambda|^2\leq a^2.$ 
Instead, we expect unstable logarithmic connections (where the power series expansion in $\lambda$ will necessarily have poles) moving into the unit disc within the $t$-deformation.
This is in contrast to the case of the equations describing (equivariant) harmonic maps into the 3-sphere, which differ from the self-duality equation only by a sign, where unstable points cannot cross the unit circle, see \cite{HH3}.
\end{remark}
 
 To fix further notations let $\C^+$ denote the upper-right quadrant of the plane
$$\C^+=\{z\in\C\,\mid\, \Re(z)>0,\Im(z)>0\}.$$
Let  $p\in\C^+$
and consider the 4-punctured sphere
$$\Sigma=\Sigma_p=\C P^1\setminus\{p_1,p_2,p_3,p_4\}$$
with
\begin{equation}\label{pointsonp}p_1=p,\quad p_2=-1/p,\quad p_3=-p\and p_4=1/p.\end{equation}
Up to M\"obius transformations, every 4-punctured sphere is of this form.
By definition $\Sigma$ is invariant under the holomorphic involutions $\delta(z)=-z$ and $\tau(z)=1/z$. Consider the Pauli matrices 
$$\mathfrak m_1=\matrix{1&0\\0&-1}
\qquad
\mathfrak m_2=\matrix{0&1\\1&0}
\qquad
\mathfrak m_3=\matrix{0&i\\-i&0}$$
and the holomorphic 1-forms on $\Sigma$

\begin{equation}\label{omega}
\begin{split}
\omega_1&=\frac{dz}{z-p_1}-\frac{dz}{z-p_2}+\frac{dz}{z-p_3}-\frac{dz}{z-p_4}\\
\omega_2&=\frac{dz}{z-p_1}-\frac{dz}{z-p_2}-\frac{dz}{z-p_3}+\frac{dz}{z-p_4}\\
\omega_3&=\frac{dz}{z-p_1}+\frac{dz}{z-p_2}-\frac{dz}{z-p_3}-\frac{dz}{z-p_4}.
\end{split}
\end{equation}

Then the $\omega_i$ have the symmetries
\begin{equation}
\label{eq:symmetry-omega}
\begin{cases}
\delta^*\omega_1=\omega_1,\quad \delta^*\omega_2=-\omega_2,\quad\delta^*\omega_3=-\omega_3\\
\tau^*\omega_1=-\omega_1,\quad\tau^*\omega_2=\omega_2,\quad\tau^*\omega_3=-\omega_3\,.\end{cases}
\end{equation}

\begin{ansatz}\label{etatttinfront}
In the following we will consider potentials of the form
\begin{equation}
\eta_t=t\sum_{j=1}^3 x_j(t)\mathfrak m_j\omega_j\end{equation}
where $t\sim 0$ is a real parameter and $x_1(t)$, $ x_2(t)$, $x_3(t) \in \mathcal W^{\geq -1}$  are parameters depending on $t$.
\end{ansatz}
We aim to determine 
$\eta_t$
in dependence of $t$ through the implicit function theorem
by imposing the reality condition \eqref{eq:realityconditionpara}. 
The maps $t\mapsto x_j(t)$, $j=1,2,3$, are always assumed to be smooth in some neighbourhood $(-\epsilon,\epsilon)$ of $t=0.$ 
We denote the parameter vector  by \[ x(t)=(x_1(t),x_2(t),x_3(t)) \in  (\mathcal W^{\geq -1})^3.\] 
\begin{remark}\label{rem:inidata}
For $t\sim0$, the parameter $x(t)$ is not uniquely determined by the reality condition \eqref{eq:realityconditionpara} as we have a non-trivial moduli space of solutions, which is parametrized by Higgs data. Hence, we have to incorporate the dependency on the parabolic Higgs pair
when applying the implicit function theorem.
We omit this dependency for now, and refer to Section \ref{sec:t=0para} below and to Theorem \ref{thm:IFT} below for precise statements.
\end{remark}

With this ansatz $\eta_t$ is a Fuchsian system for every $\lambda \in \mathbb D_a\setminus\{0\}$ and the residues at a puncture is given by a $\Lambda \mathfrak{sl}(2,\C)$ element
$$\Res_{z=p_j} \eta_t = t A_j$$
 with 
\begin{equation}\label{symmetry-fix1-4}
\begin{split}
A_1&=\matrix{x_1(t)&x_2(t)+ix_3(t)\\x_2(t)-ix_3(t)&-x_1(t)} \quad\quad
A_2=\matrix{-x_1(t) &-x_2(t)+ix_3(t)\\-x_2(t)-ix_3(t)&x_1(t)}\\
A_3&=\matrix{x_1(t)&-x_2(t)-ix_3(t)\\-x_2(t)+ix_3(t)&-x_1(t)}\quad
A_4=\matrix{-x_1(t)&x_2(t)-ix_3(t)\\x_2(t)+ix_3(t)&x_1(t)}
\end{split}
\end{equation}
satisfying
\begin{equation}\label{parabolicweightstxx}\det(A_j)=-x_1(t)^2-x_2(t)^2-x_3(t)^2\end{equation}
for all $j= 1, ..., 4$. Moreover, $\Psi=\text{res}_{\lambda=0}\eta_t$ takes the form \eqref{eq:psiAs} with
$R_j=\text{res}_{\lambda=0}A_j.$

This ansatz is chosen such that the potential has the following symmetries
\begin{equation}\label{symmetry-fix4}
\begin{split}\delta^*\eta_t&=D^{-1}\eta_t D\with D=\matrix{i&0\\0&-i}\\
\tau^*\eta_t&=C^{-1}\eta_t C\with C=\matrix{0&i\\i&0}.
\end{split}
\end{equation}
We will show that these symmetries can be imposed without loss of generality
if we start with a symmetric Higgs field and $t\sim0$ is small.
\begin{remark}
With this ansatz we only study harmonic maps for which the monodromy becomes trivial for $t\rightarrow 0$ and consequently the corresponding Higgs fields $\Psi(t) \rightarrow 0$.
In fact, evaluating Ansatz \eqref{etatttinfront} at $t=0$ gives the trivial connection with trivial monodromy due to the factor $t$ in Ansatz \eqref{etatttinfront}.
 \end{remark}

\subsection{The choice of initial data at $t=0$, and parabolic Higgs fields}\label{sec:t=0para}
In this section, we discuss the initial data at $t=0$ for the implicit function theorem. These initial data are naturally parametrized by the coordinates $(u,v)$ from \eqref{eq:psier}, which determine the Higgs pair.

We denote the initial value of the parameters with an underscore $\cvx_j:=x_j(0)$, and accordingly  $\cv{A}_j$. These are chosen such that  $\cvx_1$, $\cvx_2$, $\cvx_3$ at $t=0$ satisfy
\begin{equation}\label{initialproblem}
\begin{cases}\det(\cv{A}_1)=-1\\
\cv{A}_1=\cv{A}_1^*,\end{cases}
\end{equation}
where $()^*$-operator for matrices is defined as
$$M^*(\lambda)=\overline{M(-1/\overline{\lambda})}^T.$$
The first equation  in \eqref{initialproblem}  implies that the local monodromies of the associated Fuchsian systems $d+\eta_t$ lie in the conjugacy class of diag$\left(\exp(2\pi it),\exp(-2\pi i t)\right)$ for every $\lambda \in \mathbb D_a.$ 
The second equation   in \eqref{initialproblem}  is an infinitesimal version of $\mathcal T$-reality at $t=0$. The equations \eqref{initialproblem}  on $\cv{A}_1$  are equivalent to
$$\begin{cases}
\cvx_1^2+\cvx_2^2+\cvx_3^2=1\\
\cvx_j=\cvx_j^*, \quad \forall j,
\end{cases}$$
where the induced $()^*$-operator for functions is defined to be
\begin{equation}\label{fstar}f^*(\lambda)=\overline{f(-1/\overline{\lambda})}.\end{equation}

 Since potentials have at most a first order pole at $\lambda = 0$ the eligible $\cvx_j$ must be a degree 1 Laurent polynomial 
\begin{equation}\label{eq:defcvxj}\cvx_j=\cvx_{j,-1}\lambda^{-1}+\cvx_{j,0}+\cvx_{j,1}\lambda \end{equation}
with
\begin{equation}\label{eq:detcvxj}
\begin{cases}
\cvx_{j,0}\in\R\\
\cvx_{j,1}=-\overline{\cvx_{j,-1}}.
\end{cases}
\end{equation}
Then $\cvx_1^2+\cvx_2^2+\cvx_3^2=1$ is equivalent to
\begin{equation}
\label{eq:det1}
\begin{cases}
{\displaystyle \sum_{j=1}^3}\cvx_{j,-1}^2=0\\

{\displaystyle\sum_{j=1}^3}\cvx_{j,-1}\cvx_{j,0}=0\\

{\displaystyle\sum_{j=1}^3}\cvx_{j,0}^2- 2|\cvx_{j,-1}|^2=1.
\end{cases}
\end{equation}
To solve the first equation of \eqref{eq:det1}, consider the standard parametrization of the quadric $\{x^2+y^2+z^2=0\}$ in $\C^3$ given by
\begin{equation}
\label{eq:ambmcm}
\cvx_{1,-1}=u\,v,\quad
\cvx_{2,-1}=\tfrac{1}{2}(v^2-u^2)\and
\cvx_{3,-1}=\tfrac{i}{2}(u^2+v^2)
\end{equation}
with $(u,v)\in\C^2\setminus\{ (0,0)\}$.
Then the second equation of \eqref{eq:det1} gives
\begin{equation}
\label{eq:det2}
u\,v\, \cvx_{1,0}+\tfrac{1}{2}(v^2-u^2)\, \cvx_{2,0}+\tfrac{i}{2}(u^2+v^2)\,\cvx_{3,0}=0
\end{equation}
and its (real) solutions are 

\begin{equation}
\label{eq:a0b0c0}
\cvx_{1,0}=\rho(|u|^2-|v|^2),\quad
\cvx_{2,0}=2\rho\,\Re(u\overline{v})\and
\cvx_{3,0}=2\rho\,\Im(u\overline{v}),\quad \text{ for some }
\rho\in\R.\end{equation}

Finally, the third equation of \eqref{eq:det1} becomes
\begin{equation}\label{eq:rho0}\rho^2(|u|^2+|v|^2)^2-(|u|^2+|v|^2)^2=1\end{equation}  which
determines $\rho$ up to sign.
The geometric meaning of these equations and its relationship to the classical Weierstrass representation of minimal surfaces in Euclidean 3-space 
is discussed in Section \ref{sectionNAHt0}. 
We will show in Section \ref{sec:twist} below that the correct sign of $\rho$ to obtain twistor lines is 
\begin{equation}
\label{eq:rho}
\rho=\sqrt{1+(|u|^2+|v|^2)^{-2}}>0.
\end{equation}
But we obtain $\mathcal T$-real holomorphic sections of $\mathcal M_{DH}(\C P^1,p_1+\dots+p_4,t)$
for $t\sim0$ for both choices of sign in \eqref{eq:rho0}.

 We have thus fixed the initial conditions of $\eta_t$ depending on a pair $(u, v) \in \C^2\setminus\{ (0,0)\}$ which parametrizes the space of all eligible non-vanishing $\lambda$-residues of the $\eta_t$.
This gives a 8-fold covering of the open subset $\mathcal U$ of the parabolic  Higgs bundle moduli space specified by a trivial underlying holomorphic bundle. In fact, $(u,v)$ uniquely determines the nilpotent residue
$R_1$ of the Higgs field at $p_1$ up to sign, and by Lemma \ref{41cov}, $R_1$ determines the gauge class of the Higgs pair uniquely up to the $\Z_2\times\Z_2$ action generated by conjugation with $C$ and $D.$
\begin{convention}
In the following, we slightly abuse  notation and neglect the $\mathbb Z_2\times \mathbb Z_2$ action when referring to completion of the nilpotent orbit $\{R_1\in\mathfrak{sl}(2,\C)\mid \det(R_1)=0\}$ as the moduli space of parabolic Higgs fields. This 4-fold covering can actually be identified with an open dense subset of the space of parabolic Higgs fields on the 1-punctured torus with parabolic weight $\tfrac{1}{2}-2t.$

We will also ignore the complex line of stable parabolic Higgs fields with underlying holomorphic structure $\mathcal O(-1)\oplus\mathcal O(1)$ at $t=0$, since the  $t\to0$ limit of the corresponding representations is not the trivial representation.
\end{convention}

\begin{remark}
When considering Ansatz \eqref{etatttinfront}  for $t>0$ we have to rephrase  the second condition in \eqref{initialproblem} which encodes the reality condition at $t=0$. 
Instead to ensure \eqref{eq:realityconditionpara}
we have to require  that $\eta_t^*$ and $\eta_t$ lie in the same gauge class. By Lemma \ref{lem:sufsec} this corresponds to constructing families of flat connections that descend to real holomorphic sections of $\mathcal M_{DH}(\C P^1,p_1+\dots+p_4,t)$ for prescribed strongly  parabolic Higgs field at $\lambda= 0$ by varying parabolic weight $t$.
\end{remark}
 
 Using the above $(u,v)$-parametrization the parabolic Higgs field $\Psi=\Res_{\lambda=0}\,\eta_t$ is given by
 \eqref{eq:psier}
and has determinant 
\begin{equation}
\label{eq:detPsi}\det(\Psi)=\frac{-4t^2(u^4-(p^2+p^{-2})u^2v^2+v^4)}{z^4-(p^2+p^{-2})z^2+1},\end{equation}
where the singular points $p_1,\dots,p_4$ are determined by $p\in \C^+$ via \eqref{pointsonp}.
The 0-eigenlines of the nilpotent residues of $\Psi$
viewed as points in $\C P^1$ have homogenous coordinates $u/v$, $-v/u$, $-u/v$ and $v/u$, respectively.
The cross-ratio of these four eigenlines, which together with the parabolic weight $t$ determines the parabolic structure 
on the trivial holomorphic bundle, is computed to be
\begin{equation}\label{comp:crossratio}\left(\frac{u}{v},\frac{-v}{u};\frac{-u}{v},\frac{v}{u}\right)=\frac{-4 u^2v^2}{(u^2-v^2)^2},\end{equation}
where the cross-ratio is defined to be
$$(z_1,z_2;z_3,z_4)=\frac{(z_3-z_1)(z_4-z_2)}{(z_3-z_2)(z_4-z_1)}.$$
The cross-ratio \eqref{comp:crossratio} is already uniquely determined by the ratio $u/v$, up to
a $\Z_2\times\Z_2$ symmetry generated by $u/v\mapsto -u/v,\,u/v\mapsto v/u$ in accordance with Proposition \ref{pro:classparahiggs} (since scaling $u$ and $v$ simultaneously just scales the Higgs field).

\section{Constructing real holomorphic sections}\label{IFT}
In order for the potential $d+\eta_t$ to be the lift of a real holomorphic section $s$ we impose the condition that
\begin{equation}\label{DPWreality}
d+ \eta_t(\lambda) \quad\text{is gauge equivalent to}\quad  d+\overline{\eta_t(-\bar\lambda^{-1})} 
\end{equation}
for all $\lambda \in \mathbb A_a=\{\lambda\mid\,  a^{-2}<|\lambda|^2< a^2\}$
as in \eqref{eq:realityconditionpara}.
To make this condition more explicit, we identify 
the space of gauge equivalence classes of flat connections with
the space of representations of $\pi_1(\Sigma,z_0)$ modulo conjugation via the monodromy representation. 
Let $p\in \C^+$ and $\Sigma = \Sigma_p$ be the 4-punctured sphere and fix the base point $z_0=0.$ Choose the generators $\gamma_1$, $\gamma_2$, $\gamma_3$
and $\gamma_4$ of the fundamental group $\pi_1(\Sigma,0)$ as in \cite{HHT2}, i.e, let $\gamma_1$ be the composition of the real half-line from $0$ to $+\infty$ with the imaginary half-line from $+i\infty$ to $0$, and more generally
$\gamma_k$ is the product of the half-line from $0$ to $i^{k-1}\infty$ with the half line from
$i^k\infty$ to $0$, so $\gamma_k$ encloses $p_k$ and $\gamma_1\gamma_2\gamma_3\gamma_4=1$.
Let $\Phi_t$ be the fundamental solution of the Cauchy Problem
\begin{equation}
\label{eq:cauchy} d_{\Sigma}\Phi_t=\Phi_t\eta_t\quad\text{with initial condition}\quad
\Phi_t(z=0)=\Id\end{equation}
and let $M_k(t)=\mathcal M(\Phi_t,\gamma_k)$ be the monodromy of $\Phi_t$ along $\gamma_k$. 
This gives the {\em left} monodromy representation of the connection $d+\eta_t,$ i.e.,
\[M_1(t)M_2(t)M_3(t)M_4(t)=\mathrm{Id}.\]
The identification of the left and the right monodromy representation is done by taking the inverse, which is in  
accordance with the fact that for a solution $\Phi$ of $d\Phi=\Phi\eta$ its inverse solves $d\Phi^{-1}=-\eta\Phi^{-1}.$
In particular, two connections are gauge equivalent if and only if their left monodromies are conjugate to each other.
In order to directly  use results and explicit computations from our previous work (e.g. \cite{HHT1,HHT2}), we will work with the left monodromy in the following.

\subsubsection*{Fricke coordinates}
The moduli space of representations
 can be parametrized using the so-called Fricke coordinates. Define \[s_k:=\tr (M_k); \quad \quad s_{kl}=\tr (M_kM_l).\] Since $M_k \in \SL(2, \C), $ the trace $s_k$ determines the eigenvalues of $M_k, $ i.e., the conjugacy class of the local monodromy $M_k.$ For the symmetric case considered in this paper we restrict to $$s_1= s_2=s_3 = s_4 = 2\cos(2 \pi t)$$ 
for $t\in (0,\tfrac{1}{4})$ and the following classical result by Fricke-Voigt holds. 
 \begin{proposition}\label{Pro:Friecke} Consider a $\SL(2,\C)$-representation on the $4$-punctured sphere $\Sigma$. Let  $$s=s_1=\dots =s_4\in(0,2)$$ and   let $U=s_{12}$, $V=s_{23}$, $W=s_{13}.$ Then the following algebraic equation holds 
  \begin{equation}\label{eq:quadratic}
 U^2+V^2+W^2+U\,V\,W-2 s^2(U+V+W)+4(s^2-1)+s^4=0.\end{equation} When satisfying \eqref{eq:quadratic} the parameters $s$ and $U,V,W$ together determine a monodromy representation $\rho \colon \pi_1(\Sigma) \rightarrow$ SL$(2, \C)$ from the first fundamental group of $\Sigma$ into SL$(2, \C)$.  
By imposing the symmetries \eqref{symmetry-fix4}, this representation is unique up to conjugation. \end{proposition}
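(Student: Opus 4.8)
The fundamental group $\pi_1(\Sigma,0)$ is free on the three generators $\gamma_1,\gamma_2,\gamma_3$, since the relation $\gamma_1\gamma_2\gamma_3\gamma_4=1$ expresses $\gamma_4=(\gamma_1\gamma_2\gamma_3)^{-1}$. Hence a representation $\rho$ is the same datum as a triple $(M_1,M_2,M_3)\in\SL(2,\C)^3$, with $M_4=(M_1M_2M_3)^{-1}$. The plan is to obtain \eqref{eq:quadratic} as the specialization, to the symmetric locus $s_1=\dots=s_4$, of the classical Fricke--Vogt relation cutting out the $\SL(2,\C)$-character variety of the free group on three generators.

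First I would set up the trace calculus. The only inputs are the Cayley--Hamilton identity $G^2-\tr(G)G+\Id=0$ for $G\in\SL(2,\C)$, which gives $\tr(G^{-1})=\tr(G)$ and the basic relation $\tr(GH)+\tr(GH^{-1})=\tr(G)\tr(H)$. Iterating these for $A=M_1$, $B=M_2$, $C=M_3$ yields the two key polynomial identities
\[\tr(ABC)+\tr(ACB)=P,\qquad \tr(ABC)\,\tr(ACB)=Q,\]
where, in terms of $x=\tr A$, $y=\tr B$, $z=\tr C$, $u=\tr(BC)$, $v=\tr(CA)$, $w=\tr(AB)$, one has $P=xu+yv+zw-xyz$ and $Q=x^2+y^2+z^2+u^2+v^2+w^2-(yzu+zxv+xyw)+uvw-4$. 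Consequently $\theta:=\tr(M_1M_2M_3)$ and $\tr(M_1M_3M_2)$ are the two roots of $X^2-PX+Q=0$, so in particular $\theta^2-P\theta+Q=0$.

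Next I would impose the puncture data. Since $M_4=(M_1M_2M_3)^{-1}$ and $\tr(G^{-1})=\tr(G)$ in $\SL(2,\C)$, we get $s_4=\tr(M_4)=\theta$; the hypothesis $s_1=s_2=s_3=s_4=s$ therefore forces $x=y=z=\theta=s$, while $w=U$, $u=V$, $v=W$ by definition. Substituting these values into $\theta^2-P\theta+Q=0$ is a short computation: the $s^3$- and $s^4$-contributions from $P\theta$ recombine with $Q$ so that the constant part collapses to $4(s^2-1)+s^4$ and the linear part to $-2s^2(U+V+W)$, reproducing \eqref{eq:quadratic} exactly.

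Finally, for the reconstruction statement I would invoke the classical fact that, away from the reducible locus, the seven traces determine an irreducible representation of the free group on three generators up to overall conjugation, and that the trace map surjects onto the hypersurface $\{X^2-PX+Q=0\}$; thus any $(s,U,V,W)$ satisfying \eqref{eq:quadratic} is realized by some $\rho$. Imposing in addition the symmetries \eqref{symmetry-fix4}, which require $\rho$ to intertwine the involutions $\delta,\tau$ through the fixed matrices $D,C$, removes the residual conjugation freedom and pins down $\rho$ uniquely, as in \cite[Lemma 17]{HH3}. The only genuinely delicate point is the bookkeeping behind the identities for $P$ and $Q$; everything else is either classical character-variety theory or the one-line specialization to the symmetric locus.
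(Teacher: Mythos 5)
Your derivation of the identity \eqref{eq:quadratic} is correct and is genuinely more self-contained than the paper's treatment of this part: the paper simply cites Goldman's exposition of Fricke's results, whereas you re-derive the relation by specializing the Fricke--Vogt equation $\theta^2-P\theta+Q=0$ for the free group on three generators to the symmetric locus $x=y=z=\theta=s$, $w=U$, $u=V$, $v=W$. The specialization does check out: $P=s(U+V+W)-s^3$ and $Q=3s^2+U^2+V^2+W^2+UVW-s^2(U+V+W)-4$, so $s^2-sP+Q$ collapses exactly to the left-hand side of \eqref{eq:quadratic}. The existence statement, via surjectivity of the trace map onto the Fricke hypersurface, is also fine and is classical.

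The genuine gap is in the uniqueness part. The classical fact you invoke --- that the traces determine a representation up to conjugation --- holds only for \emph{irreducible} representations, and in that case it holds \emph{without any symmetry assumption}, as the paper itself points out. The entire reason the symmetries \eqref{symmetry-fix4} enter the uniqueness statement is the \emph{reducible} locus, which your proposal never addresses: for reducible representations the character does not separate conjugacy classes (a direct sum and a non-split extension with the same diagonal part have identical traces), so character-variety theory gives nothing there. This is where the paper's proof does its actual work: by Goldman's Lemma 5, reducibility is equivalent to $s_{i,j}\in\{2,-2+s^2\}=\{2,2\cos(4\pi t)\}$ for all $i,j$; since $t\in(0,\tfrac{1}{4})$, equation \eqref{eq:quadratic} then forces $(U,V,W)$ to be one of the three triples $(2,2,2\cos(4\pi t))$, $(2,2\cos(4\pi t),2)$, $(2\cos(4\pi t),2,2)$, and for each of these one checks directly that there is a unique Fuchsian representation compatible with the symmetries \eqref{symmetry-fix4}, up to conjugation by $C$ and $D$. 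Without this case analysis the uniqueness claim is not proved. Relatedly, you misread what the symmetries accomplish: the proposition's conclusion is still ``unique up to conjugation,'' so they do not ``remove the residual conjugation freedom and pin down $\rho$ uniquely''; rather, they restore uniqueness of the conjugacy class at precisely those (reducible) points where trace coordinates alone fail to do so.
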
  
   
\begin{proof}
For the first (classical) part of the proposition see for example \cite{gold2}. Whenever the representations are irreducible,  \cite{gold2} moreover shows that they are uniquely determined by their global traces $U,V, W $ up to conjugation, even without symmetry assumptions.
By \cite[Lemma 5]{gold2} a representation is reducible if and only if   $$s_{ij}\in\{2,-2+s^2\}=\{2,2 \cos (4 \pi  t)\}$$
for all $i,j$. For $t\in (0,\tfrac{1}{4})$  \eqref{eq:quadratic} then implies that
$$(U,V,W)\in\{(2,2,2 \cos (4 \pi  t)),(2,2 \cos (4 \pi  t),2),(2 \cos (4 \pi  t),2,2)\}.$$
It can be  easily checked that for each of the 3 possibilities there is a Fuchsian potential, unique up to conjugation with $C$, $D$, and $CD,$ satisfying the symmetry assumptions \eqref{symmetry-fix4} and inducing the corresponding totally reducible representation.
\end{proof}   
Using the above proposition the reality condition \eqref{DPWreality} on the potentials $\eta_t$ is equivalent to 
$$s_{jk}=s_{jk}^*\quad \text{ for } (j,k)\in\{(1,2),(1,3),(2,3)\},$$ with $()^*$ as defined in \eqref{fstar}.
The goal of this section is thus to solve the following Monodromy Problem 
\begin{equation}
\label{eq:monodromy-problem1}
\begin{cases}
s_{jk}=s_{jk}^*\quad \text{ for } (j,k)\in\{(1,2),(1,3),(2,3)\}\\
{\displaystyle \sum_{j=1}^3} x_j(t)^2=1
\end{cases}
\end{equation}
using a similar implicit function theorem argument as in \cite{HHT1, HHT2}.
By Lemma \ref{lem:sufsec} and Proposition \ref{Pro:Friecke} we then obtain real holomorphic sections of
$\mathcal M_{DH}(\C P^1,p_1+\dots+p_4,t).$
It then remains to show that the so-constructed real holomorphic sections are negative and in the connected component of twistor lines. This is done in Theorem \ref{computingthecomponent} below.

\subsection{Setup}
As in \cite{HHT2}, consider for a fixed potential $\eta$ the extended frame $\Phi$ satisfying $d\Phi = \Phi \eta$ and $\Phi(z=0)= \Id$. Let $\mathcal P=\Phi(z=1)$ and $\mathcal Q=\Phi(z=i),$ where we omitted the index $t$. Then the traces $s_{jk}$ are given by squares of holomorphic functions in terms of the entries of $\mathcal P= (\mathcal P_{ij})$ and $\mathcal Q = (\mathcal Q_{ij})$ as follows:
\begin{proposition}
\label{prop:traces}
With the notation above we have
\begin{equation}\label{eq:s12}s_{12}=2-4\mathfrak p^2\end{equation}
\begin{equation}\label{eq:s23}s_{23}=2-4\mathfrak q^2\end{equation}
\begin{equation}\label{eq:s13}s_{13}=2-4\mathfrak r^2\end{equation}
with
\begin{equation*}
\mathfrak p=\mathcal P_{11}\mathcal P_{21}-\mathcal P_{12}\mathcal P_{22}, \quad 
\mathfrak q= i(\mathcal Q_{11}\mathcal Q_{21}+\mathcal Q_{12}\mathcal Q_{22})
\end{equation*}
and
\begin{eqnarray*}
\mathfrak r&=&
\frac{i}{2}(\mathcal P_{22} \mathcal Q_{11}+\mathcal P_{12}\mathcal Q_{21})^2
+\frac{i}{2}(\mathcal P_{22} \mathcal Q_{12}+\mathcal P_{12}\mathcal Q_{22})^2
\\&&
-\frac{i}{2}(P_{21} Q_{11}+P_{11}Q_{21})^2
-\frac{i}{2}(P_{21} Q_{12}+P_{11}Q_{22})^2
.\end{eqnarray*}
\end{proposition}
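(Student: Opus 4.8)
The plan is to compute the traces $s_{12}=\tr(M_1M_2)$, $s_{23}=\tr(M_2M_3)$, $s_{13}=\tr(M_1M_3)$ by expressing the monodromy matrices $M_k$ in terms of the fundamental solution $\Phi_t$ evaluated at the special points $z=1$ and $z=i$, exploiting the symmetries \eqref{symmetry-fix4} to reduce everything to the matrices $\mathcal P=\Phi(1)$ and $\mathcal Q=\Phi(i)$. First I would recall how each loop $\gamma_k$ decomposes: $\gamma_k$ is built from rays to $i^{k-1}\infty$ and $i^k\infty$, and the holomorphic involutions $\delta(z)=-z$ and $\tau(z)=1/z$ act on these rays. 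Since $\delta^*\eta_t=D^{-1}\eta_t D$ and $\tau^*\eta_t=C^{-1}\eta_t C$, parallel transport along a ray and its image under $\delta$ or $\tau$ differ by conjugation by $D$ or $C$; this lets me write the transport along the $\gamma_k$ in terms of a single ``half-transport'' matrix together with $\mathcal P$, $\mathcal Q$ and the constant matrices $C,D$. The key point is that $\mathcal P$ and $\mathcal Q$ record the transport from $0$ out to the point where the symmetry takes over, so the full monodromies $M_k$ become words in $\mathcal P,\mathcal Q,C,D$.

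The main computational step is then to form the products $M_1M_2$, $M_2M_3$, $M_1M_3$ and take traces. Here I expect each relevant product to collapse, after using $C^2=-\Id$, $D^2=-\Id$ (up to the correct signs for the given $C,D$) and $\det\mathcal P=\det\mathcal Q=1$, into an expression that is a perfect square plus a constant. Concretely, I would verify the three stated identities \eqref{eq:s12}, \eqref{eq:s23}, \eqref{eq:s13} by direct expansion: for $s_{12}=2-4\mathfrak p^2$ one checks that $\tr(M_1M_2)-2$ equals $-4(\mathcal P_{11}\mathcal P_{21}-\mathcal P_{12}\mathcal P_{22})^2$; the appearance of a single bilinear combination $\mathfrak p$ squared reflects that $M_1M_2$ is conjugate to a matrix whose off-diagonal structure forces its trace to depend on just this one quadratic invariant. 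The analogous computation for $s_{23}$ produces $\mathfrak q=i(\mathcal Q_{11}\mathcal Q_{21}+\mathcal Q_{12}\mathcal Q_{22})$, the $i$ coming from the extra factor of $\tau$ (hence $C$) needed to route the loop $\gamma_3$ back through the symmetry.

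The genuinely involved case is $s_{13}$, since $\gamma_1$ and $\gamma_3$ are separated by two applications of the involutions, so the product $M_1M_3$ mixes both $\mathcal P$ and $\mathcal Q$; this is where the four-term expression for $\mathfrak r$ arises, with the combinations $\mathcal P_{22}\mathcal Q_{11}+\mathcal P_{12}\mathcal Q_{21}$ etc. being exactly the matrix entries of the product $\mathcal P^{-1}\mathcal Q$ (or a similar conjugate), and the two signs distinguishing the $(1,\cdot)$ from the $(2,\cdot)$ rows. I would identify these four bilinear forms as entries of a single auxiliary matrix and then recognize $\mathfrak r$ as the off-diagonal quadratic invariant of that matrix, giving $s_{13}=2-4\mathfrak r^2$ by the same mechanism as before.

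I expect the main obstacle to be \emph{bookkeeping}: correctly tracking which segments of each $\gamma_k$ are transported by $\mathcal P$ versus $\mathcal Q$, and which conjugations by $C$ and $D$ (with their precise signs and factors of $i$) are inserted, so that the final traces come out as the clean squared expressions claimed. The conceptual content—that each trace is $2$ minus four times a squared holomorphic bilinear invariant—follows once the monodromies are written in the symmetry-adapted form; the risk lies entirely in sign and index errors in the reduction from the $M_k$ to $\mathcal P$ and $\mathcal Q$. I would cross-check the result against the analogous computation in \cite{HHT2}, from which this setup is adapted.
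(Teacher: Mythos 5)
Your proposal follows essentially the same route as the paper: the paper quotes \cite[Proposition 16]{HHT2} for \eqref{eq:s12}--\eqref{eq:s23}, and for \eqref{eq:s13} it uses precisely your symmetry-adapted monodromies ($\Phi(+\infty)=\mathcal P C\mathcal P^{-1}C^{-1}$, $\Phi(+i\infty)=\mathcal Q DC\mathcal Q^{-1}C^{-1}D^{-1}$, $M_1=\Phi(+\infty)\Phi(+i\infty)^{-1}$, $M_3=DM_1D^{-1}$) to obtain $M_1M_3=-\bigl(\mathcal P C\mathcal P^{-1}D\mathcal Q CD\mathcal Q^{-1}D\bigr)^2$ and then applies $\tr(-A^2)=2-\tr(A)^2$ for $A\in\SL(2,\C)$, so that $\mathfrak r=\tfrac{1}{2}\tr(A)$. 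The one point to sharpen in your sketch is that $\mathfrak p,\mathfrak q,\mathfrak r$ arise as half-traces of such words (equivalently, diagonal entries of conjugates like $\mathcal P C\mathcal P^{-1}$) rather than ``off-diagonal quadratic invariants''; otherwise your bookkeeping-plus-expansion plan is exactly what the paper's ``tedious computation'' amounts to.
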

\begin{proof}
The equations
\eqref{eq:s12} and \eqref{eq:s23} have been proven in \cite[Proposition 16]{HHT2}.
It remains to show \eqref{eq:s13}. Using the symmetries $\delta$ and $\tau\circ\delta$ which fix $z=1$ and $z=i$ respectively we have as in \cite[Proposition 16]{HHT2}
\begin{equation}
\begin{split}
\Phi(+\infty)&=\mathcal P C \mathcal P^{-1}C^{-1}, \quad \quad\quad\quad\;\;
\Phi(+i\infty)=\mathcal Q DC \mathcal Q^{-1} C^{-1}D^{-1},\\
M_1&=\Phi(+\infty)\Phi(+i\infty)^{-1}, \quad \quad\quad\quad
M_3=DM_1D^{-1}.
\end{split}
\end{equation}
This gives with $C^2=D^2=-\Id$ and $CD=-DC$ that
$$M_1M_3=-(\mathcal P C\mathcal P^{-1}C^{-1}DC\mathcal Q C^{-1}D^{-1}Q^{-1}D)^2
=-\left(\mathcal P C \mathcal P^{-1} D\mathcal Q CD \mathcal Q^{-1} D\right)^2.$$
For $A\in SL(2,\C)$ we have
$$\tr(-A^2)=2-\tr(A)^2,$$
hence \eqref{eq:s13} holds with
$$\mathfrak r=\frac{1}{2}\tr\left(\mathcal P C \mathcal P^{-1} D\mathcal Q DC \mathcal Q^{-1} D\right)$$
which coincides with the formula given in  Proposition \ref{prop:traces} after a tedious computation.
\end{proof}
\begin{proposition}
\label{prop:derivative-pqr}
At $t=0$, we have
$$\mathfrak p(0)=\mathfrak q(0)=\mathfrak r(0)=0$$
with derivatives
$$\mathfrak p'(0)=2\pi x_3(0),\quad \mathfrak q'(0)=2\pi x_2(0)\and \mathfrak r'(0)=2\pi x_1(0).$$
\end{proposition}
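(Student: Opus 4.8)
The plan is to exploit that the fundamental solution $\Phi_t$ depends on $t$ only through the overall factor $t$ in the potential $\eta_t=t\sum_j x_j(\lambda)\mathfrak m_j\omega_j$, so that at $t=0$ we have $\Phi_0\equiv\Id$ and the first variation is given by a single line integral. Concretely, writing $\Phi_t=\Id+t\,\dot\Phi+O(t^2)$ and differentiating the Cauchy problem \eqref{eq:cauchy} at $t=0$ gives $d\dot\Phi=\sum_j x_j\mathfrak m_j\omega_j$ with $\dot\Phi(0)=0$, so
\[
\dot\Phi(z)=\sum_{j=1}^3 x_j\,\mathfrak m_j\int_{0}^{z}\omega_j .
\]
Thus the first task is to evaluate the three path integrals $\int_0^1\omega_j$ (to get $\dot{\mathcal P}=\dot\Phi(1)$) and $\int_0^i\omega_j$ (to get $\dot{\mathcal Q}=\dot\Phi(i)$) along the prescribed generators, using the explicit residue data $p_1=p,\;p_2=-1/p,\;p_3=-p,\;p_4=1/p$ and the definitions \eqref{omega}.

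Since $\mathcal P_0=\mathcal Q_0=\Id$ (entrywise $\mathcal P_{11}=\mathcal P_{22}=1$, $\mathcal P_{12}=\mathcal P_{21}=0$ at $t=0$, likewise for $\mathcal Q$), the values $\mathfrak p(0)=\mathfrak q(0)=\mathfrak r(0)=0$ are immediate from the formulas in Proposition \ref{prop:traces}: each of $\mathfrak p,\mathfrak q$ is a product of an off-diagonal entry with something, and the expression for $\mathfrak r$ collapses to $\tfrac{i}{2}(1+0)-\tfrac{i}{2}(0+1)=0$ once the identity values are substituted. For the derivatives I would differentiate these same bilinear expressions by the Leibniz rule, keeping only the terms where exactly one factor is differentiated (the surviving linear-in-$t$ part), and substitute the identity values for the undifferentiated factors together with the entries $\dot{\mathcal P}_{ij},\dot{\mathcal Q}_{ij}$ read off from the line integrals above. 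For $\mathfrak p'(0)$ and $\mathfrak q'(0)$ this is short; the matrix structure $\mathfrak m_1=\mathrm{diag}(1,-1)$ contributes only to the diagonal of $\dot\Phi$ while $\mathfrak m_2,\mathfrak m_3$ contribute only off-diagonal, so the off-diagonal entries that enter $\mathfrak p,\mathfrak q$ pick out precisely $x_3$ and $x_2$, and the integral of the relevant $\omega_j$ must produce the constant $2\pi$ (up to the factor $2$).

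The main obstacle is the trace computation underlying $\mathfrak r'(0)$, together with verifying that the three integrals all evaluate to the same normalizing constant yielding exactly $2\pi x_1,2\pi x_2,2\pi x_3$. The cleanest route for $\mathfrak r$ is to differentiate the compact trace formula $\mathfrak r=\tfrac12\tr(\mathcal P C\mathcal P^{-1}D\mathcal Q DC\mathcal Q^{-1}D)$ rather than the expanded four-square expression: at $t=0$ this reduces, using $C^2=D^2=-\Id$ and $CD=-DC$, to $\tfrac12\tr(\text{const})$, and its $t$-derivative is a sum of four trace terms each linear in $\dot{\mathcal P}$ or $\dot{\mathcal Q}$. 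Evaluating those traces with the explicit $C,D$ and the diagonal/off-diagonal split of $\dot{\mathcal P},\dot{\mathcal Q}$ should isolate the coefficient of $\mathfrak m_1$, i.e.\ $x_1$, with the $\mathfrak m_2,\mathfrak m_3$ contributions canceling by the symmetry \eqref{eq:symmetry-omega} of the forms $\omega_2,\omega_3$ under $\delta$ and $\tau$ (these fix $z=1$ and $z=i$). The remaining careful point is the precise evaluation of the period-type integrals $\int_0^1\omega_j$ and $\int_0^i\omega_j$ along the half-line paths defining $\gamma_k$; one expects the logarithmic primitives $\log(z-p_k)$ to combine so that the finite contour contributions sum to a multiple of $2\pi i$, with the factor of $i$ absorbed by the $i$'s appearing in $\mathfrak m_3,\mathfrak q,\mathfrak r$. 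I would check the normalization on a single explicit case to pin down the constant $2\pi$ before asserting the general formulas.
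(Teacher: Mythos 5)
Your overall strategy is the same as the paper's: linearize the Cauchy problem \eqref{eq:cauchy} at $t=0$ to get $\mathcal P'(0)=\sum_j x_j\Omega_j(1)\mathfrak m_j$ and $\mathcal Q'(0)=\sum_j x_j\Omega_j(i)\mathfrak m_j$, then apply the Leibniz rule to the formulas of Proposition \ref{prop:traces} around $\mathcal P(0)=\mathcal Q(0)=I_2$. That part of your plan is sound (your preference for differentiating the compact trace formula for $\mathfrak r$ rather than the expanded four-square expression is a cosmetic difference; both collapse at the identity to $\mathfrak r'(0)=i(\mathcal P_{22}'+\mathcal Q_{11}'-\mathcal P_{11}'-\mathcal Q_{22}')(0)=-2ix_1(\Omega_1(1)-\Omega_1(i))$, and similarly $\mathfrak p'(0)=-2ix_3\Omega_3(1)$, $\mathfrak q'(0)=2ix_2\Omega_2(i)$, where the $x_2$ resp.\ $x_3$ contributions cancel because $\mathfrak p$ involves the antisymmetric and $\mathfrak q$ the symmetric off-diagonal combination, not merely because they are off-diagonal).

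The genuine gap is in your final step, the evaluation of the integrals. You expect each of $\int_0^1\omega_j$ and $\int_0^i\omega_j$ to "sum to a multiple of $2\pi i$" and propose to "check the normalization on a single explicit case" and then assert the general formulas. This fails as stated: the individual integrals are \emph{not} independent of $p$ (for instance $\Omega_1(1)=i\pi-2\log p$ and $\Omega_2(1)$ is a genuinely $p$-dependent logarithm), so a numerical check at one value of $p$ proves nothing for general $p\in\C^+$. Only the three particular combinations that survive in the derivatives, namely $\Omega_3(1)$, $\Omega_2(i)$ and $\Omega_1(1)-\Omega_1(i)$, are constants, and establishing this constancy is exactly the content of the step you are skipping. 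The paper handles it with a short global argument: by the Residue Theorem, $2\pi i=\int_{\gamma_1}\omega_j$, and decomposing $\gamma_1$ into its four half-line pieces and pulling back by $\tau$ and $\tau\delta$ using the symmetries \eqref{eq:symmetry-omega} converts this period into
\begin{equation*}
2\pi i=\int_0^1\omega_j-\int_0^1\tau^*\omega_j-\int_0^i\omega_j+\int_0^i(\tau\delta)^*\omega_j,
\end{equation*}
which immediately yields $\Omega_1(1)-\Omega_1(i)=\pi i$, $\Omega_2(i)=-\pi i$, $\Omega_3(1)=\pi i$ for every $p$, and hence the three formulas $2\pi x_1,2\pi x_2,2\pi x_3$. (A direct computation of all six integrals via logarithmic primitives, with care about branches, would also work, but that is not what your plan describes.) You need to replace the "check one case" step with such an argument.
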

\begin{proof}
At $t=0$ we have $\eta_t=0$ thus all monodromies are trivial and $\mathcal P=\mathcal Q=\Id$ from which the first point follows. For the assertion on the derivatives define
as in \cite{HHT2}
\begin{equation}\label{Omega}
\Omega_j(z)=\int_0^z\omega_j(z)
\end{equation}
 for $j=1,2,3,$ where the integral is computed on the segment from $0$ to $z$.
Then
$$\mathcal P'(0)=\sum_{j=1}^3 x_j(0)\Omega_j(1)\mathfrak m_j, \and \mathcal Q'(0)=\sum_{j=1}^3 x_j(0)\Omega_j(i)\mathfrak m_j,$$
from which we can compute
\begin{equation*}
\begin{split}
\mathfrak p'(0)&=\mathcal P_{21}'(0)-\mathcal P_{12}'(0)=-2 i x_3(0)\Omega_3(1)\\
\mathfrak q'(0)&=i(\mathcal Q_{21}'(0)+\mathcal Q_{12}'(0))=2i x_2(0)\Omega_2(i)\\
\mathfrak r'(0)&=i(\mathcal P_{22}'(0)+\mathcal Q_{11}'(0)-\mathcal P_{11}'(0)-\mathcal Q_{22}'(0))
=-2i x_1(0)(\Omega_1(1)-\Omega_1(i)).
\end{split}
\end{equation*}
By the Residue Theorem, we have for $j=1,2,3$
$$2\pi i=\int_{\gamma_1}\omega_j=\int_0^1\omega_j-\int_0^1\tau^*\omega_j
-\int_0^i\omega_j+\int_0^i(\tau\delta)^*\omega_j.$$
Using the symmetries \eqref{eq:symmetry-omega}, this gives
\begin{equation}
\label{eq:easy-integrals}
\Omega_1(1)-\Omega_1(i)=\pi i,\quad
\Omega_2(i)=-\pi i\and
\Omega_3(1)=\pi i
\end{equation}
proving Proposition \ref{prop:derivative-pqr}.
\end{proof}
In view of Proposition \ref{prop:traces}, the monodromy problem \eqref{eq:monodromy-problem1} can be reformulated to be
\begin{equation}
\label{eq:monodromy-problem2}
\begin{cases}
\mathfrak p=\mathfrak p^*\\
\mathfrak q=\mathfrak q^*\\
\mathfrak r=\mathfrak r^*\\
{\displaystyle\sum_{j=1}^3} x_j(t)^2=1
\end{cases}
\end{equation}
and the following proposition implies that it suffices to solve for two of the three traces only.
\begin{proposition}
\label{prop:two-traces}
Assume that $x_1(t),x_2(t),x_3(t)\colon (-\varepsilon, \varepsilon) \rightarrow \mathcal W^{\geq 1}$ are analytic in $t$,
with $x_j(0)=\cvx_j,$ satisfying the following equations for all $t$
\begin{equation}
\label{eq:monodromy-problem3}
\begin{cases}
\mathfrak p=\mathfrak p^*\\
\mathfrak q=\mathfrak q^*\\
{\displaystyle\sum_{j=1}^3} x_j(t)^2=1.
\end{cases}
\end{equation}
Then also $\mathfrak r=\mathfrak r^*$ for all $t$.
Analogous statements hold by cyclicly permutating $\mathfrak{p,q,r}$.
\end{proposition}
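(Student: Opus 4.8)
The plan is to turn the Fricke--Voigt identity \eqref{eq:quadratic} into a statement about the involution $*$ of \eqref{fstar}. First I would record two preliminary facts. On the one hand, the hypothesis $\sum_j x_j^2=1$ forces $\det A_j=-1$ for every $j$, so all four local monodromies have trace $s=2\cos(2\pi t)$; hence Proposition \ref{Pro:Friecke} applies and \eqref{eq:quadratic} holds, as an identity in $\lambda$ and $t$, with $U=s_{12}$, $V=s_{23}$, $W=s_{13}$. On the other hand, $f\mapsto f^*$ is a ring homomorphism on functions (it preserves sums and products and sends a constant $c$ to $\bar c$), and it fixes the real, $\lambda$-independent scalar $s$. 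By Proposition \ref{prop:traces} together with $s_{12}=2-4\mathfrak p^2$ and $s_{23}=2-4\mathfrak q^2$, the two reality hypotheses $\mathfrak p=\mathfrak p^*$ and $\mathfrak q=\mathfrak q^*$ are equivalent to $U=U^*$ and $V=V^*$.

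Applying $*$ to \eqref{eq:quadratic} and using $U^*=U$, $V^*=V$, $s^*=s$, I find that $(U,V,W^*)$ satisfies the same relation as $(U,V,W)$. Reading \eqref{eq:quadratic} as a monic quadratic in the last slot,
\[P(X)=X^2+(UV-2s^2)X+\big(U^2+V^2-2s^2(U+V)+4(s^2-1)+s^4\big),\]
with $*$-invariant coefficients, this exhibits both $W$ and $W^*$ as roots of $P$. Subtracting the two instances of \eqref{eq:quadratic} then factors the difference:
\begin{equation*}
(W-W^*)\,\big(W+W^*+UV-2s^2\big)=0.
\end{equation*}
Both factors are holomorphic in $\lambda$ on the annulus and real analytic in $t$; extending $t$ to a complex neighbourhood they are jointly holomorphic on a connected domain, so they lie in an integral domain and exactly one of them must vanish identically in $(t,\lambda)$. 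If the first factor vanishes, then $\mathfrak r^2=(\mathfrak r^*)^2$, whence $\mathfrak r=\pm\mathfrak r^*$ (again by the integral-domain property in $\lambda$); since $\mathfrak r$ and $\mathfrak r^*$ share the first-order term $2\pi\cvx_1\,t$ by Proposition \ref{prop:derivative-pqr} and $\cvx_1=\cvx_1^*$, the combination $\mathfrak r+\mathfrak r^*$ is not identically zero, so the sign is $+$ and $\mathfrak r=\mathfrak r^*$. The cyclic statements follow by permuting $\mathfrak p,\mathfrak q,\mathfrak r$ and $U,V,W$.

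The main obstacle is to exclude the second factor, and here the degeneracy of the problem at $t=0$ is the real difficulty. Expanding with Proposition \ref{prop:derivative-pqr} gives
\[W+W^*+UV-2s^2=32\pi^2 t^2\Big(1-\sum_j\cvx_j^2\Big)+O(t^3),\]
and the bracket vanishes precisely because of the hypothesis $\sum_j x_j^2=1$; in fact the two roots of $P$ agree to order $t^2$, so no low-order jet at $t=0$ separates the two cases. I would resolve this either by carrying the $t$-expansion to the order at which the discriminant of $P$ first becomes nonzero—thereby showing the second factor is not identically zero and forcing the first to vanish—or, more conceptually, by tracking which of the two coincident roots $W=W^*=2$ at $t=0$ the analytic family $W^*(t,\cdot)$ follows, using that $\mathfrak r^*$ and $\mathfrak r$ have the same first derivative. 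The uniqueness clause of Proposition \ref{Pro:Friecke}, that a representation subject to the symmetries \eqref{symmetry-fix4} is determined up to conjugation by $s$ and its global traces, is what ultimately guarantees that the branch carrying the correct first-order data is genuinely $W$, completing the exclusion.
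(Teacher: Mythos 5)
Your reduction is set up correctly: the hypotheses do give $U=U^*$, $V=V^*$, $s=s^*$, applying $*$ to \eqref{eq:quadratic} shows that $W^*$ is a root of the same monic quadratic $P$ as $W$, and subtracting the two relations yields $(W-W^*)\bigl(W+W^*+UV-2s^2\bigr)=0$, so by analyticity at least one factor (not ``exactly one'') vanishes identically; your treatment of the first case, including the sign argument via $\mathfrak r+\mathfrak r^*=4\pi\cvx_1 t+O(t^2)\not\equiv 0$, is also fine. But the proof stops exactly where the real work begins: you never exclude the second factor, and neither of your two proposed repairs is actually carried out. This is a genuine gap, not a technicality. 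As your own expansion shows, the second factor vanishes to order $t^2$ precisely because $\sum_j\cvx_j^2=1$; worse, the two roots of $P$ differ by $\pm 2\mathfrak{pq}\sqrt{\Delta}$, where $\Delta=64(1-\mathfrak p^2)(1-\mathfrak q^2)-16s^2$ itself vanishes to order $t^2$, so the root separation is $O(t^3)$. Distinguishing ``$W^*$ follows the same branch as $W$'' from ``$W^*$ follows the opposite branch'' therefore requires third-order data in $t$, i.e.\ control of $\mathfrak r''$, which (see \eqref{eq:r''}) involves the first derivatives $x_j'$ and iterated integrals --- information that is neither supplied by Proposition \ref{prop:derivative-pqr} nor constrained by the hypotheses of the proposition at that order. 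The closing appeal to the uniqueness clause of Proposition \ref{Pro:Friecke} is too vague to substitute for this missing step.

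The paper escapes this trap by a different algebraic move, made \emph{before} any square roots in $W$ are taken: after substituting $U=2-4\mathfrak p^2$, etc., the Fricke polynomial factors as $Q=Q_1Q_2$ with $Q_j=s^2+4(\mathfrak p^2+\mathfrak q^2+\mathfrak r^2-1)+8(-1)^j\mathfrak{pqr}$, i.e.\ into two quadratics in $\mathfrak r$ itself rather than one quadratic in $W=2-4\mathfrak r^2$. By analyticity one $Q_j$ vanishes identically, whence $\mathfrak r=(-1)^{j+1}\mathfrak{pq}+\delta/8$ with $\delta$ an analytic square root of $\Delta$; the hypotheses give $\Delta=\Delta^*$, hence $\delta^*=\sign\delta$ with a $t$-independent sign, and $\mathfrak r^*=(-1)^{j+1}\mathfrak{pq}+\sign\delta/8$. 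The crucial gain over your formulation is that $\delta'(0)=8\mathfrak r'(0)=16\pi\cvx_1\not\equiv0$, so the sign is already visible at first order, and Proposition \ref{prop:derivative-pqr} together with $\cvx_1^*=\cvx_1$ forces $\sign=1$. In your setup the analogous square root $\sqrt{D}=\pm2\mathfrak{pq}\delta$ is $O(t^3)$, which is precisely why first-order information can never resolve the branch and your argument stalls; to close the gap you would essentially have to pass from the quadratic in $W$ to the paper's factorization in $\mathfrak r$.
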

\begin{proof}
Let
$$U=s_{12},\quad V=s_{23},\quad
W=s_{13}\quad \text{and} \quad s=\tr({M_k})$$
as in Proposition \ref{Pro:Friecke} satisfying the quadratic equation \eqref{eq:quadratic}
$$Q : = U^2+V^2+W^2+U\,V\,W-2 s^2(U+V+W)+4(s^2-1)+s^4 =0.$$

By substitution of $U=2-4\mathfrak p^2$, $V=2-4\mathfrak q^2$ and
$W=2-4\mathfrak r^2$, $Q $ factors as
$$Q=Q_1 Q_2\with
Q_j=s^2+4(\mathfrak p^2+\mathfrak q^2+\mathfrak r^2-1)+8(-1)^j \mathfrak{pqr}.$$
Since $Q=0$ for all $t$, and $Q_1$, $Q_2$ are analytic functions of $t$, one of them must be identically zero. (We will see in Remark \ref{remark:r''} below that $j=2$.)
The discriminant of  $Q_j,$ considered as a polynomial in the variable $\mathfrak r$ is give by 
$$\Delta=64(1-\mathfrak p^2)(1-\mathfrak q^2)-16 s^2$$
and is independent of $j=1,2$.
Since $\mathcal P$, $\mathcal Q$ are well-defined analytic functions in $t$ (with values in $\mathcal W_a$),
$\mathfrak r$ is a well-defined analytic function in $t$ as well by Proposition \ref{prop:traces}. Therefore, $\Delta$ admits a well-defined square root $\delta$
such that 
$$\mathfrak r=(-1)^{j+1}\mathfrak p\mathfrak q+\frac{\delta}{8}$$
for all $t.$ 
From the hypotheses of the proposition, we have $\Delta=\Delta^*$ hence $\delta^*=\varepsilon \delta,$
where the sign $\sign=\pm 1$ does not depend on $t$ because $\mathfrak r^*$ is a well-defined analytic function in $t$.
Hence
$$\mathfrak r^*=(-1)^{j+1}\mathfrak p\mathfrak q+ \sign\frac{\delta}{8}.$$

The sign $\varepsilon$ can be determined using the first order derivatives at $t=0$. We have
$$\mathfrak r'=2\pi\cvx_1=\frac{\delta'}{8} \and \mathfrak r'^*=2\pi\cvx_1^*=2\pi\cvx_1=\sign\frac{\delta'}{8}.$$
Since $\cvx_1\not\equiv 0$, we obtain $\sign=1$.
Hence $\mathfrak r=\mathfrak r^*$ for all $t$.
\end{proof}
\subsection{Solving the Monodromy Problem}
\label{section:monodromy:IFT}
As remarked before, see Remark \ref{rem:inidata}, the potentials $d+\eta_t$ are not uniquely determined by 
the reality conditions \eqref{eq:monodromy-problem1} or \eqref{eq:monodromy-problem2}, but also depend on
the Higgs data.
By Proposition \ref{pro:classparahiggs}, we identify the moduli space of strongly parabolic Higgs fields on $V= \mathcal O \oplus \mathcal O$, up to taking the quotient by
$\Z_2\times\Z_2$ action, with the completion of the nilpotent orbit in $\mathfrak{sl}(2, \C),$ which is in turn given by the blow-up of $\C^2/\Z_2$ at the origin. First, consider  the regular case of  $(u, v) \in\C^2\setminus\{(0,0)\}$ and consider for $j=1,2,3$ the quadratic polynomials
$$P_j(\lambda)=\lambda\cvx_j(\lambda)=\cvx_{j,1}\lambda^2+\cvx_{j,0}\lambda-\overline{\cvx_{j,1}},$$
where the $\cvx_j$ are defined in \eqref{eq:defcvxj} satisfying \eqref{eq:detcvxj}, \eqref{eq:ambmcm}, \eqref{eq:a0b0c0} and \eqref{eq:rho0}, and denote the  discriminant of $P_j$  by
$$\Delta_j=\cvx_{j,0}^2+4 |\cvx_{j,1}|^2\in\R.$$
The following arguments work for both signs of $\rho$ in \eqref{eq:rho0}, compare with Lemma \ref{lem:signmean} below. When
statements do depend on the choice of sign, e.g., in Theorem \ref{blowuplimit}, we state them for the correct choice \eqref{eq:rho} only.

\begin{proposition}\label{prop:rootPj}
$\;$\\
\vspace{-0.6cm}
\begin{enumerate}
\item The polynomial $P_j$ has a complex root $\mu_j$ with $|\mu_j|<1$ if and only if $\cvx_{j,0}\neq 0$. In this case $\mu_j \in \mathbb D_1$ depends real-analytically on $(u,v) \neq (0,0)$.
\item With the same notation,
$P_k(\mu_j)$ and $P_{\ell}(\mu_j)$ are $\R$-independent complex numbers, if  $\{j,k,\ell\}=\{1,2,3\}$.
\end{enumerate}
\end{proposition}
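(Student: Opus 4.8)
The plan is to exploit two structural facts: the pointwise reality $\cvx_j=\cvx_j^*$ of each coefficient, and the identity $\cvx_1^2+\cvx_2^2+\cvx_3^2=1$, which holds for \emph{all} $\lambda$ since it is equivalent to \eqref{eq:det1}.

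For (1), I would first observe that because $\cvx_j=\cvx_j^*$ the zeros of $\cvx_j$ (equivalently the nonzero roots of $P_j$) occur in pairs $\{\mu,-1/\overline\mu\}$: from $\cvx_j(\mu)=0$ one gets $0=\overline{\cvx_j(\mu)}=\cvx_j^*(-1/\overline\mu)=\cvx_j(-1/\overline\mu)$. Since $\mu\mapsto-1/\overline\mu$ fixes the unit circle and swaps its inside and outside, the two roots of $P_j$ are either both on $\{|\lambda|=1\}$ or one lies in $\mathbb D_1$ and the other outside. Reading off from $P_j(\lambda)=\cvx_{j,1}\lambda^2+\cvx_{j,0}\lambda-\overline{\cvx_{j,1}}$ that the product of the roots is $-\overline{\cvx_{j,1}}/\cvx_{j,1}$ (of modulus one) and their sum is $-\cvx_{j,0}/\cvx_{j,1}$, the two roots are antipodal on the circle, i.e. of the form $\{\mu,-\mu\}$, which is the only way a $\{\mu,-1/\overline\mu\}$ pair can have unit modulus, exactly when their sum vanishes, that is when $\cvx_{j,0}=0$. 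Hence $P_j$ has a (necessarily unique) root in $\mathbb D_1$ iff $\cvx_{j,0}\neq0$; the degenerate case $\cvx_{j,1}=0$ is immediate, as then $P_j=\cvx_{j,0}\lambda$ has its only root at $0$. For real-analyticity, when $\cvx_{j,0}\neq0$ the discriminant $\Delta_j=\cvx_{j,0}^2+4|\cvx_{j,1}|^2$ is strictly positive, so $\mu_j$ is a simple root staying off $\{|\lambda|=1\}$; writing it as the contour integral $\tfrac{1}{2\pi i}\oint_{|\lambda|=1}\lambda\,P_j'(\lambda)/P_j(\lambda)\,d\lambda$ exhibits it as a real-analytic function of $\cvx_{j,0},\cvx_{j,1}$, which depend real-analytically on $(u,v)$ through \eqref{eq:ambmcm}, \eqref{eq:a0b0c0} and \eqref{eq:rho}.

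For (2), since multiplication by the nonzero number $\mu_j$ is an invertible $\R$-linear map, it is equivalent (when $\mu_j\neq0$) to prove that $\cvx_k(\mu_j)$ and $\cvx_\ell(\mu_j)$ are $\R$-independent. Evaluating $\sum_i\cvx_i^2=1$ at $\lambda=\mu_j$ and using $\cvx_j(\mu_j)=0$ gives the crucial normalization
\[\cvx_k(\mu_j)^2+\cvx_\ell(\mu_j)^2=1.\]
A short linear-algebra remark then reduces the claim: two complex numbers $A,B$ with $A^2+B^2=1$ are $\R$-dependent if and only if both are real, since writing $A,B\in e^{i\theta}\R$ gives $A^2+B^2=e^{2i\theta}(\alpha^2+\beta^2)$ with $\alpha^2+\beta^2>0$, forcing $e^{2i\theta}=1$. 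So it suffices to show that $\cvx_k(\mu_j)$ and $\cvx_\ell(\mu_j)$ are not both real.

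The heart of the argument, and the step I expect to be the main obstacle, is turning reality of an evaluation into a collinearity statement about the residues. Assuming $\cvx_k(\mu_j)\in\R$, the pairing identity of (1) gives $\cvx_k(\mu_j)=\overline{\cvx_k(\mu_j)}=\cvx_k(-1/\overline{\mu_j})=\cvx_k(\mu_j')$, where $\mu_j'=-1/\overline{\mu_j}$ is the second root of $P_j$. Subtracting, factoring out $(\mu_j-\mu_j')\neq0$, and inserting $\mu_j\mu_j'=-\overline{\cvx_{j,1}}/\cvx_{j,1}$ should yield, after using $\cvx_{k,-1}=-\overline{\cvx_{k,1}}$, the relation $\cvx_{k,1}\overline{\cvx_{j,1}}=\overline{\cvx_{k,1}}\cvx_{j,1}$; that is, $\cvx_{k,1}$ is a real multiple of $\cvx_{j,1}$. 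If both $\cvx_k(\mu_j)$ and $\cvx_\ell(\mu_j)$ were real, the same argument would make $\cvx_{\ell,1}$ a real multiple of $\cvx_{j,1}$, so with $\cvx_{j,1}\neq0$ all three $\cvx_{i,1}$ would be real multiples of $\cvx_{j,1}$, whence $\sum_i\cvx_{i,1}^2=\cvx_{j,1}^2(1+c_k^2+c_\ell^2)$ for real $c_k,c_\ell$. But $\sum_i\cvx_{i,1}^2=\overline{\sum_i\cvx_{i,-1}^2}=0$ by \eqref{eq:det1}, forcing $1+c_k^2+c_\ell^2=0$, a contradiction. Finally, the remaining case $\mu_j=0$ (i.e. $\cvx_{j,1}=0$) is handled directly: then $P_k(\mu_j)=\cvx_{k,-1}$ and $P_\ell(\mu_j)=\cvx_{\ell,-1}$, while $\sum_i\cvx_{i,-1}^2=0$ together with $\cvx_{j,-1}=0$ gives $\cvx_{\ell,-1}=\pm i\,\cvx_{k,-1}$ with $\cvx_{k,-1}\neq0$, which are manifestly $\R$-independent.
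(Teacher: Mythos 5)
Your proof is correct, and it follows the same skeleton as the paper's: the pairing of roots $\{\mu,-1/\overline{\mu}\}$ extracted from $\cvx_j=\cvx_j^*$, the equivalence $|\mu|=1\Leftrightarrow \cvx_{j,0}=0$, the reduction (for $\mu_j\neq 0$) from $P_k(\mu_j)$ to $\cvx_k(\mu_j)$, the use of $\sum_i\cvx_i^2=1$ to force both values to be real, and the separate residue argument when $\mu_j=0$. It deviates in two steps, both legitimately. First, for real-analyticity of $\mu_j$ the paper writes the root via the quadratic formula and recasts it as $\mu_j=\frac{2\overline{\cvx_{j,1}}}{\cvx_{j,0}}\,f\bigl(4|\cvx_{j,1}|^2/\cvx_{j,0}^2\bigr)$ with $f(z)=(\sqrt{1+z}-1)/z$ holomorphic at $0$, whereas you represent $\mu_j$ by the argument-principle integral $\frac{1}{2\pi i}\oint_{|\lambda|=1}\lambda\,P_j'(\lambda)/P_j(\lambda)\,d\lambda$; both are sound, yours avoids sign/branch bookkeeping and treats the degenerate case $\cvx_{j,1}=0$ uniformly, while the paper's yields an explicit closed form. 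Second, in part (2), after reality of $\cvx_k(\mu_j)$ and $\cvx_\ell(\mu_j)$ is established, the paper notes that $\cvx_{k,1}\mu_1-\overline{\cvx_{k,1}}\mu_1^{-1}\in\R$ for all $k$ and expands the sum of squares $\sum_{k}\bigl(\cvx_{k,1}\mu_1-\overline{\cvx_{k,1}}\mu_1^{-1}\bigr)^2\geq 0$ against $\sum_k\cvx_{k,1}^2=0$ to get $\sum_k|\cvx_{k,1}|^2\leq 0$, hence $u=v=0$; you instead equate the values of $\cvx_k$ at the two roots, factor out $(\mu_j-\mu_j')$, and use Vieta's product to conclude $\cvx_{k,1},\cvx_{\ell,1}\in\R\,\cvx_{j,1}$, so that $\sum_i\cvx_{i,1}^2=\cvx_{j,1}^2(1+c_k^2+c_\ell^2)$ cannot vanish, contradicting the isotropy relation directly (I checked the computation: $\cvx_{k,1}=\cvx_{k,-1}/(\mu_j\mu_j')$ with $\mu_j\mu_j'=-\overline{\cvx_{j,1}}/\cvx_{j,1}$ indeed gives $\cvx_{k,1}\overline{\cvx_{j,1}}\in\R$). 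Your collinearity mechanism is arguably more transparent and never needs to return to the coordinates $(u,v)$ in the main case; the paper's sum-of-squares trick handles all three indices symmetrically in one stroke. One small point to patch: in the $\mu_j=0$ case you assert $\cvx_{k,-1}\neq 0$ without justification; add the one-liner that if $\cvx_{k,-1}=0$ then $\cvx_{\ell,-1}^2=-\cvx_{k,-1}^2=0$ as well, so all three $\cvx_{i,-1}$ vanish and the parametrization \eqref{eq:ambmcm} forces $(u,v)=(0,0)$, a contradiction --- which is exactly how the paper closes that case.
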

\begin{proof}
If $\cvx_{j,1}=0,$ the first point is trivial, because in this case $P_j=\cvx_{j,0} \lambda$
and $\cvx_{j,0}\neq 0$ since $(u,v)\neq(0,0)$.
Hence assume in the following $\cvx_{j,1}\neq 0$.
If $\mu$ is a root of $P_j$, then $\mu\neq 0$. Moreover, $\cv{x}_j=\cv{x}_j^*$ gives that 
$-\tfrac{1}{\overline{\mu}}\neq \mu$ is the other root of $P_j$. Thus
$$|\mu|=1\Leftrightarrow \mu-\frac{1}{\overline{\mu}}=0\Leftrightarrow \cvx_{j,0}=0.$$
For $x_{j, 0} \neq 0$ the root $\mu_j$ with $|\mu_j|<1$ is given  by
$$\mu_j=\frac{-\cvx_{j,0}+\operatorname{sign}(\cvx_{j,0})\sqrt{\Delta_j}}{2 \cvx_{j,1}},$$
which we can rewrite as
$$\mu_j=\frac{2\,\overline{\cvx_{j,1}}}{\cvx_{j,0}}f\left(\frac{4|\cvx_{j,1}|^2}{\cvx_{j,0}^2}\right)
\with
f(z)=\frac{\sqrt{1+ z}-1}{z}.$$
The function $f$ extends holomorphically to $z=0$, therefore $\mu_j$ depends analytically on
$(u,v)$. 
To prove the second point, assume for simplicity of notation that $j=1$.
Suppose by contradiction that $P_2(\mu_1)$ and $P_3(\mu_1)$ are linearly dependent over $\R$.
First assume that $\mu_1\neq 0$.
Then the complex numbers $\cvx_2(\mu_1)$, $\cvx_3(\mu_1)$
are linearly dependent over $\R$. Moreover, $\cvx_1(\mu_1)=0$ so
$$\cvx_2(\mu_1)^2+\cvx_3(\mu_1)^2=1$$
and this implies that $\cvx_2(\mu_1)$ and $\cvx_3(\mu_2)$ are real.
Since all $\cvx_{k,0}$ are real, we obtain
$$\cvx_{k,1}\mu_1-\overline{\cvx_{k,1}}\mu_1^{-1}\in\R
\quad\text{for $k=1,2,3$}.$$
Then
$$\sum_{k=1}^3\left(\cvx_{k,1}\mu_1-\overline{\cvx_{k,1}}\mu_1^{-1}\right)^2\geq 0.$$
Expanding the squares and using $\sum_{k=1}^3 \cvx_{k,1}^2=0$ we obtain
$$\sum_{k=1}^3|\cvx_{k,1}|^2\leq 0$$
which implies $u=v=0$ which is a contradiction.

\noindent
If $\mu_1=0$, we have $P_2(0)=\cvx_{2,-1}$
and $P_3(0)=\cvx_{3,-1}$. From $P_1(0)=\cvx_{1,-1}=0$ we obtain
$$\cvx_{2,-1}^2+\cvx_{3,-1}^2=0$$
and since $\cvx_{2,-1}$ and $\cvx_{3,-1}$ are linearly dependent over $\R$, $\cvx_{2,-1}=\cvx_{3,-1}=0$
which again results in $u=v=0$ leading to a contradiction.
\end{proof}

\begin{theorem}
\label{thm:IFT}
Let $(u,v)\neq (0,0)$ be fixed.  
This determines $\cvx_{j,-1}$ for $j=1, 2, 3$. Then, there are $\epsilon_0>0$ and $a >1$ such that there exists unique values of the parameters $x(t)=(x_1(t),x_2(t),x_3(t)) \in (\mathcal W_a^{\geq -1})^3$ in a neighborhood of
$\cvx$ for all $t\in (-\epsilon_0,\epsilon_0)$ depending real analytically on $(t,p,u,v)$ solving \eqref{eq:monodromy-problem1} with $x(0)=\cvx$ and prescribed $x_{j,-1}(t)=\cvx_{j,-1}$.
Moreover, $\epsilon_0$  and $a>1$ are uniform with respect to $(p,u,v)$ on compact subsets of
$\C^+\times\C^2\setminus\{(0,0)\}$. \end{theorem}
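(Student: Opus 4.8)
The plan is to realize the reduced system \eqref{eq:monodromy-problem3} as the zero set of a real-analytic map between Banach spaces and to solve it by the analytic implicit function theorem; Proposition \ref{prop:two-traces} then upgrades the solution to the full monodromy problem \eqref{eq:monodromy-problem1}, since $\mathfrak r = \mathfrak r^*$ follows automatically. Throughout I would work in the Banach algebras $\mathcal W_a$, in which the monodromy entries $\mathcal P, \mathcal Q$, and hence $\mathfrak p, \mathfrak q$, depend real-analytically on $t$ and on the parameters. I keep the residues $x_{j,-1} = \cvx_{j,-1}$ fixed, so that the unknown is the regular part of $x = (x_1,x_2,x_3)$, lying in $(\mathcal W_a^{\geq 0})^3$, and impose the reality and determinant conditions on the remaining data.

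The central preparatory point is that the naive map $x \mapsto (\mathfrak p - \mathfrak p^*,\ \mathfrak q - \mathfrak q^*,\ \sum_j x_j^2 - 1)$ is degenerate at $t = 0$, where $\eta_0 = 0$ forces $\mathfrak p \equiv \mathfrak q \equiv 0$ and the first two components vanish identically in $x$. I therefore rescale: since $\mathfrak p, \mathfrak q$ vanish at $t = 0$ and are analytic in $t$, the quotients $\tfrac{1}{t}(\mathfrak p - \mathfrak p^*)$ and $\tfrac{1}{t}(\mathfrak q - \mathfrak q^*)$ extend analytically across $t = 0$, and by Proposition \ref{prop:derivative-pqr} their values there are $2\pi(x_3 - x_3^*)$ and $2\pi(x_2 - x_2^*)$. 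This yields a real-analytic map
$$F(t, x) = \left(\tfrac{1}{t}(\mathfrak p - \mathfrak p^*),\ \tfrac{1}{t}(\mathfrak q - \mathfrak q^*),\ \sum_{j=1}^3 x_j^2 - 1\right),$$
whose first two components land in the real subspace $\{h \in \mathcal W_a : h^* = -h\}$ and whose last lands in $\lambda^{-1}\mathcal W_a^{\geq 0}$ — the potential $\lambda^{-2}$ term of $\sum_j x_j^2$ drops out precisely because the prescribed residues satisfy $\sum_j \cvx_{j,-1}^2 = 0$, the first line of \eqref{eq:det1}. By construction $F(0, \cvx) = 0$, as the chosen initial data \eqref{initialproblem} in particular satisfy $\cvx_2 = \cvx_2^*$, $\cvx_3 = \cvx_3^*$ and $\sum_j \cvx_j^2 = 1$.

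The crux is to show that $D_x F(0, \cvx)$ is a linear isomorphism. Its first two components linearize to $\delta x \mapsto 2\pi(\delta x_3 - (\delta x_3)^*)$ and $2\pi(\delta x_2 - (\delta x_2)^*)$; the operator $h \mapsto h - h^*$ maps $\mathcal W_a^{\geq 0}$ onto $\{h^* = -h\}$ with kernel the real constants, so these two equations determine $\delta x_2, \delta x_3$ up to additive real constants $r_2, r_3 \in \R$. The third component linearizes to $2\sum_j \cvx_j \delta x_j$, which I would solve for $\delta x_1 = (2\cvx_1)^{-1}\big(g - 2\cvx_2\delta x_2 - 2\cvx_3\delta x_3\big)$; for $\delta x_1$ to lie in $\mathcal W_a^{\geq 0}$ the numerator must vanish at the interior root $\mu_1 \in \mathbb D_a$ of $P_1 = \lambda \cvx_1$ (the conjugate root $-1/\overline{\mu_1}$ sits outside $\mathbb D_a$ for $a$ close to $1$). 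This single complex condition is exactly where Proposition \ref{prop:rootPj}(2) is used: because $P_2(\mu_1)$ and $P_3(\mu_1)$ are $\R$-linearly independent, the two real parameters $r_2, r_3$ are uniquely determined so as to kill the obstruction, giving both injectivity and surjectivity of $D_x F(0, \cvx)$. The one delicate case is $\cvx_{1,0} = 0$, when $P_1$ has no root strictly inside $\mathbb D_1$; here I would instead use the cyclic freedom of Proposition \ref{prop:two-traces} and work with the trace pair whose distinguished coefficient $\cvx_{j,0}$ is nonzero, which is always available since \eqref{eq:a0b0c0} together with $\rho \neq 0$ rules out the simultaneous vanishing of all three $\cvx_{j,0}$.

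With the isomorphism in hand, the analytic implicit function theorem produces a unique analytic family $x(t)$ near $t = 0$ with $x(0) = \cvx$, fixed residues $x_{j,-1}(t) = \cvx_{j,-1}$, and solving \eqref{eq:monodromy-problem3}, depending analytically on $(t, p, u, v)$; Proposition \ref{prop:two-traces} then gives \eqref{eq:monodromy-problem1}. For the uniformity claim, I would observe that the derivative data of Proposition \ref{prop:derivative-pqr}, the roots $\mu_j$, and the norm of the inverse of $D_x F(0, \cvx)$ all depend continuously on $(p, u, v)$; consequently the radius $\epsilon_0$ from the quantitative implicit function theorem and the radius $a > 1$ separating $\mu_1$ from $-1/\overline{\mu_1}$ can be taken uniform on compact subsets of $\C^+ \times (\C^2 \setminus \{(0,0)\})$, after covering such a compact set by finitely many regions on which a fixed choice of trace pair keeps the relevant $\cvx_{j,0}$ bounded away from $0$. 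The main obstacle is this isomorphism step, and within it the transversality of Proposition \ref{prop:rootPj}(2), which is exactly what rigidifies the residual real freedom $(r_2, r_3)$ left undetermined by the two reality conditions.
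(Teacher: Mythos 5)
Your proposal is correct and essentially reproduces the paper's own proof: the same reduction via Proposition \ref{prop:two-traces}, the same rescaling of $\mathfrak p-\mathfrak p^*$ and $\mathfrak q-\mathfrak q^*$ by $1/t$ with linearization from Proposition \ref{prop:derivative-pqr}, the same splitting in which the two reality conditions determine $x_2,x_3$ up to additive real constants and the determinant condition is then solved by dividing by $P_1$, with the obstruction at the interior root $\mu_1$ killed using the $\R$-independence of $P_2(\mu_1)$ and $P_3(\mu_1)$ from Proposition \ref{prop:rootPj}, and the same cyclic relabeling when $\cv{x}_{1,0}=0$. The only cosmetic differences are that you run a single implicit function theorem with a block-triangular analysis of the derivative where the paper runs two successive applications, and you phrase the last step as division-with-obstruction rather than Euclidean division of $\lambda\mathcal K$ by $(\lambda-\mu_1)$ into quotient and remainder.
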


\begin{proof}
Fix $(u,v) \neq (0,0)$.
By Proposition \ref{prop:rootPj} and \eqref{eq:a0b0c0}, at least one of the polynomials $P_j$ has a root $\mu_j$ inside the unit $\lambda$-disc. By symmetry of the roles played by the parameters $\mathfrak p$, $\mathfrak q$, $\mathfrak r$, we may assume without loss of generality that $j=1$.
By Proposition \ref{prop:two-traces}, it suffices to solve Problem \eqref{eq:monodromy-problem3}.
We fix $a>1$ such that $a|\mu_1|<1$ and consider the corresponding function space
$\mathcal W^{\geq 0}_a$. We
introduce a parameter $y=(y_1,y_2,y_3)$ in a neighborhood of $0$ in
$(\mathcal W^{\geq 0}_a)^3$ and set
\begin{equation}\label{eq:defxkyk}x_k(t)=\cvx_k+y_k,\quad k=1,2,3.\end{equation}
Note that the negative part of the potential is fixed to its initial value $x_{k,-1}=\cvx_{k,-1}.$
Since $\mathfrak p$ and $\mathfrak q$ are analytic functions of $(t,y)$
which vanish at $t=0$, the functions
$$\wh{\mathfrak p}(t,y):=\frac{1}{t}\mathfrak p(t,y)
\and
\wh{\mathfrak q}(t,y):=\frac{1}{t}\mathfrak q(t,y)$$
extend analytically at $t=0$ and by Proposition \ref{prop:derivative-pqr} we have at $t=0$
\begin{equation}\label{eq:whmathfrakpqt0}\wh{\mathfrak p}(0,y)=2\pi(\cvx_3+ y_3)
\and
\wh{\mathfrak q}(0,y)=2\pi(\cvx_2+y_2).\end{equation}
We define
\begin{equation}
\begin{split}
\mathcal F(t,y)&:=\wh{\mathfrak p}(t,y)-\wh{\mathfrak p}(t,y)^*\\
\mathcal G(t,y)&:=\wh{\mathfrak q}(t,y)-\wh{\mathfrak q}(t,y)^*\\
\mathcal K(y)&:=\sum_{k=1}^3 x_k(t)^2=\sum_{k=1}^3(\cvx_k+y_k)^2.
\end{split}
\end{equation}
Then solving Problem\eqref{eq:monodromy-problem3} is equivalent to solving the equations $\mathcal F=\mathcal G=0$ and $\mathcal K=1$.
By our choice of the central value \eqref{eq:defcvxj} and due to \eqref{eq:whmathfrakpqt0}, these equations hold at $(t,y)=(0,0)$.
By definition we have $\mathcal F^*=-\mathcal F$ so $\mathcal F=0$ is equivalent to
$\mathcal F^+=0$ and $\Im(\mathcal F^0)=0$, and the analogous statement holds for $\mathcal G$ as well.
By Proposition \ref{prop:derivative-pqr} and substituting \eqref{eq:defxkyk}
the partial derivatives with respect to $y=(y_1,y_2,y_3)$ are
\begin{equation}\label{eq:partialderFGplus}
\begin{split}
d\mathcal F(0,0)^+&=2\pi  dy_3^+\\
\Im(d\mathcal F(0,0)^0)&=2\pi \Im(d y_{3,0})\\
d\mathcal G(0,0)^+&=2\pi  d y_2^+\\
\Im(d\mathcal G(0,0)^0)&=2\pi \Im(d y_{2,0}).
\end{split}
\end{equation}
Hence, the partial derivative of 
$$\left(\mathcal F^+,\mathcal G^+,\Im(\mathcal F^0),\Im(\mathcal G^0)\right)$$ 
with respect to 
$$\left(y_2^+,y_3^+, \Im(y_{2,0}),\Im(y_{3,0})\right)$$ is an automorphism
of $(\mathcal W^+_a)^2\times\R^2$.
The implicit function theorem therefore  uniquely determines 
$(y_2^+,y_3^+, \Im(y_{2,0}),\Im(y_{3,0}))\in(\mathcal W^+_a)^2\times\R^2$ as analytic functions of $t$
and the remaining parameters $y_1$, $\Re(y_{2,0})$ and $\Re(y_{3,0})$.
Furthermore, the partial derivative of $(y_2^+,y_3^+, \Im(y_{2,0}),\Im(y_{3,0}))$ at $(t,y)=(0,0)$ with respect to these remaining parameters is zero by \eqref{eq:partialderFGplus}.

It remains to solve the equation $\mathcal K=1$.
We write the Euclidean division of the polynomial $P_k$ by $(\lambda-\mu_1)$ as
$$P_k(\lambda)=(\lambda-\mu_1)Q_k+P_k(\mu_1)$$
with $Q_k\in\C[\lambda]$.
Note that the $Q_k$ are real analytic in $(u,v)$ by Proposition \ref{prop:rootPj}.
Observe that since $$\sum_{k=1}^3\cvx_{k,-1}^2=0,$$ $\mathcal K$ has no $\lambda^{-2}$ term so
$\lambda\mathcal K\in\mathcal W^{\geq 0}$.
We write the division of $\lambda\mathcal K$ by $(\lambda-\mu_1)$ as
$$\lambda\mathcal K=(\lambda-\mu_1)\mathcal S+\mathcal R$$
where $\mathcal R\in\C$ and $\mathcal S\in\mathcal W^{\geq 0}$.
Note that since $|\mu_1|<1$, $\mathcal R$ and $\mathcal S$ are analytic functions of all parameters by \cite[Proposition 5]{HHT2}.
We have, since $P_1(\mu_1)=0$ and $dy_k=\Re(dy_{k,0})$ for $k=2,3$
\begin{eqnarray*}
d(\lambda\mathcal K)(0,0)&=&\sum_{k=1}^3 2P_k(\lambda)\,dy_k=2(\lambda-\mu_1)Q_1d y_1+\sum_{k=2}^3 2\big((\lambda-\mu_1)Q_k+P_k(\mu_1)\big)\Re(dy_{k,0})\end{eqnarray*}
so by uniqueness of the division
\begin{equation*}
\begin{split}
d\mathcal R(0,0)&=2P_2(\mu_1)\Re(d y_{2,0})+2P_3(\mu_1)\Re(d y_{3,0})\\
d\mathcal S(0,0)&=2 Q_1d y_1 +2 Q_2\Re(d y_{2,0})+2 Q_3\Re(d y_{3,0}).
\end{split}
\end{equation*}
If $\cvx_{1,1}\neq 0$, we have $\mu_1\neq 0$ and the other root of $P_1$ is
$-1/\overline{\mu_1}$ so
$$Q_1=\cvx_{1,1}\left(\lambda+\frac{1}{\overline{\mu_1}}\right)$$
is invertible in $\mathcal W^{\geq 0}_a$ because $\frac{1}{|\mu_1|}>a$.
(Note that $\lambda-c$ is invertible in $\mathcal W^{\geq 0}_a$
if and only if $|c|>a$.)

If $\cvx_{1,1}=0$, we have $$P_1=\cvx_{1,0}\lambda$$ so $Q_1=\cvx_{1,0}\in\C^*$ is invertible
in $\mathcal W^{\geq 0}_a$ as well.
By Proposition \ref{prop:rootPj}, $P_2(\mu_1)$ and $P_3(\mu_1)$ are $\R$-independent complex numbers, and
 the partial derivative  of $(\mathcal S,\mathcal R)$ with respect to
$\left(y_1,\Re(y_{2,0}),\Re(y_{3,0})\right)$  at $(t,y)=(0,0)$ is an isomorphism from
$\mathcal W^{\geq 0}_a\times\R^2$ to $\mathcal W^{\geq 0}_a\times\C$.
Thus, the implicit function theorem uniquely determines $(y_1$, $\Re(y_{2,0}))$ and $\Re(y_{3,0})$
as analytic functions of $t$ in a neighborhood of $t=0$.
\end{proof}
\subsection{The limit $(u,v)\to (0,0)$}
In this section, we highlight the dependency of the solutions $x(t)=x(t,p,u,v)$ provided by Theorem \ref{IFT} on the parameters $p,u,v$ when necessary,
and we drop the dependency of all parameters when convenient. 

By Proposition \ref{pro:classparahiggs} the Higgs bundle moduli space is given by the blow up of $\C^2/\Z_2$ at the origin. Rather than expecting the solution $x(t,p,u,v)$ to extend continuously to $(u,v)=(0,0)$, the limit $(u,v)\to (0,0)$ should therefore depend on the direction in the blow-up.
We write
$$u=r\wt{u}\and v=r\wt{v}\with
|\wt{u}|^2+|\wt{v}|^2=1.$$
Let $0<|r|\leq \frac{1}{2}$ so that $(u,v)\neq (0,0)$.
With a slight abuse of notation, we write $x=x(t,p,r,\wt{u},\wt{v})$.
Recall that $(u,v)\to (-u,-v)$ does not change the initial value $\cvx$, so the map
$x(t,p,r,\wt{u},\wt{v})$ is even with respect to $r$.

Our goal is to prove 
\begin{theorem}\label{blowuplimit}$\;$\\\vspace{-0.5cm}
\begin{enumerate}
\item There exists $\epsilon_2>0$ such that for $|t|<\epsilon_2$, the function $x(t,p,r,\wt{u},\wt{v})$ extends analytically 
to $r=0$. Moreover, $\epsilon_2$ is uniform with respect to
$p$ in compact subsets of $\C^+$ and $(\wt{u},\wt{v})$ in $\S^3$.
\item At $r=0$, $x(t,p,0,\wt{u},\wt{v})$ does not depend on $\lambda$ and solves the following problem:
\begin{equation}
\label{monodromy-problem-r0}
\begin{cases}
\exists U\in SL(2,\C),\quad\forall k,\;U M_k U^{-1}\in SU(2)\\
x_1^2+x_2^2+x_3^2=1.
\end{cases}
\end{equation}
\item At $(t,r)=(0,0)$, we have for $\rho>0$
\begin{equation}\label{eq:blowuplimit}x_1=(|\wt{u}|^2-|\wt{v}|^2),\quad
x_2=2\,\Re\left(\wt{u}\,\overline{\wt{v}}\right)\and
x_3=2\,\Im\left(\wt{u}\,\overline{\wt{v}}\right).\end{equation}
\end{enumerate}
\end{theorem}
\begin{remark}
For $\rho<0$, we have to replace in $(3)$ the limit $(x_1, x_2, x_3)$ by $-(x_1, x_2, x_3)$.
 \end{remark}
\begin{proof}[Proof of  Theorem \ref{blowuplimit}]
Rewrite the central value $\cvx$ in terms of $(r,\wt{u},\wt{v})$ as
$$\cvx_j=r^2\wtcv{x}_{j,-1}\lambda^{-1}+\wt{\rho}(r)\wtcv{x}_{j,0}+r^2\wtcv{x}_{j,1}\lambda$$
with
$$\wtcv{x}_{1,-1}=\wt{u}\,\wt{v},\quad
\wtcv{x}_{2,-1}=\tfrac{1}{2}(\wt{v}^2-\wt{u}^2),\quad
\wtcv{x}_{3,-1}=\tfrac{i}{2}(\wt{u}^2+\wt{v}^2),$$
$$\wtcv{x}_{1,0}=|\wt{u}|^2-|\wt{v}|^2,\quad
\wtcv{x}_{2,0}=2\,\Re\left(\wt{u}\,\overline{\wt{v}}\right),\quad
\wtcv{x}_{3,0}=2\,\Im\left(\wt{u}\,\overline{\wt{v}}\right),$$
$$\wtcv{x}_{j,1}= - \overline{\wtcv{x}_{j,-1}}$$
and
$$\wt{\rho}(r):=r^2\rho(r)=\sqrt{1+r^4} = 1+ O(r^4).$$

Observe that
\begin{equation}
\label{eq:wtcvx}
\sum_{k=1}^3\wtcv{x}_{k,-1}^2=0,\quad
\sum_{k=1}^3\wtcv{x}_{k,0}^2=1,\quad
\sum_{k=1}^3\wtcv{x}_{k,-1}\wtcv{x}_{k,0}=0
\end{equation}
and
\begin{equation}
\label{eq:wtcvxmodule}
\sum_{k=1}^3|\wtcv{x}_{k,-1}|^2=\frac{1}{2}.
\end{equation}
For given $(\wt{u},\wt{v})\in\S^3$, fix $j\in\{1,2,3\}$ such that $\wtcv{x}_{j,0}\neq 0$.
(This is possible by \eqref{eq:wtcvx}).
We take as ansatz that the parameter $y_j$ is of the following form
\begin{equation}
\label{eq:ansatz-wtyj}
y_j=y_{j,0}+r^2\wt{y}_j^+\with \wt{y}_j^+\in\mathcal W^+_a.
\end{equation}
We solve the equations $\mathcal F=\mathcal G=0$ using the implicit function theorem as in Section \ref{section:monodromy:IFT}. This uniquely determines the parameters $y_k^+$ and $\Im(y_{k,0})$ with $k\neq j$ as functions of $(t,r)$ and the remaining parameters $y_{j,0}$, $\wt{y}_j^+$
and $\Re(y_{k,0})$ with $k\neq j$. Moreover, at $r=0$, $y_j^+=0$ and  $y_k^+=0$, $k\neq j$, solve the equation $\mathcal F^+=\mathcal G^+=0$. 
Since $y_k^+$ is an even function of $r$ (due to uniqueness), this means that we can also write for $k\neq j$
$$y_k=y_{k,0}+r^2\wt{y}_k^+\with \wt{y}_k^+\in\mathcal W^+.$$
We decompose $\mathcal K=x_1^2+x_2^2+x_3^2$ as
$$\mathcal K=\mathcal K_{-1}\lambda^{-1}+\mathcal K_0+\mathcal K^+
\with \mathcal K^+\in\mathcal W^+.$$
When $r=0$, $\mathcal K$ does not depend on $\lambda$, i.e.,  $\mathcal K_{-1}=0$ and $\mathcal K^+=0$. Thus since $\mathcal K_{-1}$ and $\mathcal K^+$ are even functions of $r$, this
means that $\wt{\mathcal K}_{-1}=r^{-2}\mathcal K_{-1}$ and
$\wt{\mathcal K}^+=r^{-2}\mathcal K^+$ extend analytically at $r=0$.
More explicitly,
\begin{eqnarray*}
\mathcal K&=&\sum_{k=1}^3\left(r^2\wtcv{x}_{k,-1}\lambda^{-1}+\wt{\rho}\,\wtcv{x}_{k,0}+r^2\wtcv{x}_{k,1}\lambda+y_{k,0}+r^2\wt{y}_k^+\right)^2\\
&=&\sum_{k=1}^3\left(\wtcv{x}_{k,0}+y_{k,0}\right)^2+2 r^2\left(\wtcv{x}_{k,0}+y_{k,0}\right)\left(\wtcv{x}_{k,-1}\lambda^{-1}+\wtcv{x}_{k,1}\lambda+\wt{y}_k^+\right)
+O(r^4)
\end{eqnarray*}
from which we obtain at $r=0$ 
\begin{equation*}
\begin{split}
\wt{\mathcal K}_{-1}\mid_{r=0}&=2\sum_{k=1}^3\left(\wtcv{x}_{k,0}+y_{k,0}\right)\wtcv{x}_{k,-1}\\
\mathcal K_0\mid_{r=0}&=\sum_{k=1}^3\left(\wtcv{x}_{k,0}+y_{k,0}\right)^2\\
\wt{\mathcal K}^+\mid_{r=0}&=2\sum_{k=1}^3\left(\wtcv{x}_{k,0}+y_{k,0}\right)\left(\wtcv{x}_{k,1}\lambda+\wt{y}_k^+\right).
\end{split}
\end{equation*}
In particular, at the central value $y=0$, we have by Equation \eqref{eq:wtcvx} that $\mathcal K_0=1$, $\wt{\mathcal K}_{-1}=0$ and $\wt{\mathcal K}^+=0$ and the differentials with respect to $y$ at $(t,r,y)=(0,0,0)$ are
\begin{equation*}
\begin{split}
d\wt{\mathcal K}_{-1}&=2\sum_{k=1}^3 \wtcv{x}_{k,-1} dy_{k,0}\\
d\mathcal K_0&=2\sum_{k=1}^3\wtcv{x}_{k,0}dy_{k,0}\\
d\wt{\mathcal K}^+&=2\sum_{k=1}^3\wtcv{x}_{k,0}d\wt y_k^+ +\lambda\wtcv{x}_{k,1}dy_{k,0}.
\end{split}
\end{equation*}
Keep in mind that $\wt y_k^+$ and $\Im(y_k^0)$ for $k\neq j$ have already been determined and their differential 
with respect to the remaining parameters  $y_{j,0}$, $\wt{y}_j^+$
and $\Re(y_{k,0})$ with $k\neq j$, and are zero at $(t,r)=(0,0)$.
Consider the polynomials
$\wt P_k(\lambda)=\wtcv{x}_{k,0}\lambda+\wtcv{x}_{k,-1}.$ Recall that $\wtcv{x}_{j,0}\neq0$,
and let $\wt\mu_j$ be the root of $P_j$.
Then
$$d\wt{\mathcal K}_{-1}+\wt\mu_j d\mathcal K_0=2\sum_{k\neq j}\wt P_k(\wt\mu_j)\Re(d y_{k,0}).$$
By Claim \ref{claim:wtPk} below, this is an isomorphism from $\R^2$ to $\C$.
Hence the partial derivative of $(\wt{\mathcal K}_{-1},\mathcal K_0,\wt{\mathcal K}^+)$
with respect to $\big((\Re(y_{k,0}))_{k\neq j},y_j^0,\wt y_j^+\big)$ is an isomorphism from
$\R^2\times\C\times\mathcal W^+$ to $\C^2\times\mathcal W^+$.
The implicit function theorem then gives that there  exists $\epsilon_1>0$ such that for
$|t|<\epsilon_1$ and $|r|<\epsilon_1$, there exists unique values of
the parameters which solve Problem \eqref{eq:monodromy-problem1}.
When $\epsilon_1/2\leq |r|\leq 1/2$, Theorem \ref{thm:IFT} gives us a uniform $\epsilon_0>0$
such that for $|t|<\epsilon_0$, there exists unique values of the parameters which solve
Problem \eqref{eq:monodromy-problem1}. They certainly satisfy the ansatz \eqref{eq:ansatz-wtyj} since $|r|\geq \epsilon_1/2$.
Take $\epsilon_2=\min(\epsilon_1,\epsilon_0)$. In the overlap $\epsilon_1/2\leq |r|<\epsilon_1$, the solutions agree for
$|t|<\epsilon_2$ by uniqueness of the implicit function theorem.

So far, we have proven point (1), and point (3) follows by construction. Since for $r=0,$ the connections are independent of $\lambda$, the reality condition \eqref{eq:monodromy-problem1} implies that the traces $s_{jk}$ are real. Thus, the monodromy representation is either conjugated to a real representation or to a unitary representation.
Using (3), it immediately follows that the traces  for all $(j,k)\in\{(1,2),(1,3),(2,3)$ 
$s_{jk}\in[-2,2]$ for small $t$, which implies unitarity. 
\end{proof}
\begin{claim}
\label{claim:wtPk}
For $k\neq j$ the two complex numbers $\wt P_k(\wt\mu_j)$  are linearly independent over $\R$.
\end{claim}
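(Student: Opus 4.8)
The plan is to mimic the proof of part (2) of Proposition \ref{prop:rootPj}, taking advantage of the fact that in the limit $r=0$ the polynomials $\wt P_k(\lambda)=\wtcv{x}_{k,0}\lambda+\wtcv{x}_{k,-1}$ are \emph{linear}, which streamlines the argument considerably. By the symmetry of the construction in the three indices I may assume $j=1$, so that $\wt\mu_1=-\wtcv{x}_{1,-1}/\wtcv{x}_{1,0}$ is the unique root of $\wt P_1$, well-defined because $\wtcv{x}_{1,0}\neq 0$ by the choice of $j$.

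First I would record the single algebraic identity that drives everything. Expanding $\sum_{k=1}^3\wt P_k(\lambda)^2$ and using the three relations in \eqref{eq:wtcvx} (namely $\sum_k\wtcv{x}_{k,0}^2=1$, $\sum_k\wtcv{x}_{k,-1}\wtcv{x}_{k,0}=0$ and $\sum_k\wtcv{x}_{k,-1}^2=0$) gives $\sum_{k=1}^3\wt P_k(\lambda)^2=\lambda^2$ identically. Evaluating at $\lambda=\wt\mu_1$, where $\wt P_1(\wt\mu_1)=0$, then yields the key relation $\wt P_2(\wt\mu_1)^2+\wt P_3(\wt\mu_1)^2=\wt\mu_1^2$.

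Next I would argue by contradiction, supposing that $\wt P_2(\wt\mu_1)$ and $\wt P_3(\wt\mu_1)$ are $\R$-linearly dependent. Then both lie on a common real line $\R\zeta$ through the origin for some unit $\zeta\in\C$ (choosing $\zeta$ along the nonzero one if exactly one vanishes, and arbitrarily if both vanish). Writing $\wt P_k(\wt\mu_1)=\alpha_k\zeta$ with $\alpha_k\in\R$ for $k=2,3$, the relation above gives $\wt\mu_1^2=(\alpha_2^2+\alpha_3^2)\zeta^2$, a \emph{nonnegative} real multiple of $\zeta^2$, whence $\wt\mu_1\in\R\zeta$ as well. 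Since each $\wtcv{x}_{k,0}$ is real and $\wtcv{x}_{k,-1}=\wt P_k(\wt\mu_1)-\wtcv{x}_{k,0}\wt\mu_1$ for $k=2,3$ while $\wtcv{x}_{1,-1}=-\wtcv{x}_{1,0}\wt\mu_1$, it follows that all three $\wtcv{x}_{k,-1}$ lie in $\R\zeta$.

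Finally I would close the contradiction using the remaining relations. Writing $\wtcv{x}_{k,-1}=\rho_k\zeta$ with $\rho_k\in\R$, the identity $\sum_k\wtcv{x}_{k,-1}^2=0$ from \eqref{eq:wtcvx} becomes $\big(\sum_k\rho_k^2\big)\zeta^2=0$, forcing $\rho_k=0$ for all $k$; but then $\sum_k|\wtcv{x}_{k,-1}|^2=0$, contradicting \eqref{eq:wtcvxmodule}, which asserts $\sum_k|\wtcv{x}_{k,-1}|^2=\tfrac{1}{2}\neq 0$. The only delicate point is the passage from $\R$-linear dependence to the common-direction statement together with the extraction of the square root placing $\wt\mu_1$ in $\R\zeta$, and in particular the degenerate sub-cases in which one or both of $\wt P_2(\wt\mu_1),\wt P_3(\wt\mu_1)$ vanish (these correspond exactly to the split into $\mu_1\neq 0$ and $\mu_1=0$ in Proposition \ref{prop:rootPj}); I expect this bookkeeping, rather than any substantive computation, to be the main thing to handle carefully.
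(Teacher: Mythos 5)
Your proof is correct and follows essentially the same route as the paper: both arguments exploit the quadric relations \eqref{eq:wtcvx} (equivalently your identity $\sum_{k}\wt P_k(\lambda)^2=\lambda^2$) to show that $\R$-linear dependence would force all $\wtcv{x}_{k,-1}$ onto a common real line, whence $\sum_k\wtcv{x}_{k,-1}^2=0$ makes them all vanish, contradicting \eqref{eq:wtcvxmodule}. The only difference is cosmetic: the paper normalizes by dividing by $\wt\mu_1$ (and so needs a separate case $\wt\mu_1=0$), while your common-direction vector $\zeta$ handles both cases uniformly.
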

\begin{proof}
Assume for the simplicity of notation that $j=1$ and that 
$\wt P_2(\wt\mu_1)$ and $\wt P_3(\wt\mu_1)$ are linearly dependent.
If $\mu_1\neq 0$, let
$$\alpha_k=\wt\mu_1^{-1}\wt P_k(\wt\mu_1)=\wtcv{x}_{k,0}+\wtcv{x}_{k,-1}\wt\mu_1^{-1}.$$
Using Equation \eqref{eq:wtcvx} we have
$$\sum_{k=1}^3 \alpha_k^2=1.$$
Since $\alpha_1=0$ and $\alpha_2$, $\alpha_3$ are linearly dependent,
all $\alpha_k$ must be real numbers. Since $\wtcv{x}_{k,0}$ are real, 
each $\wtcv{x}_{k,-1}\wt\mu_1^{-1}$ must be real. Then
using Equation \eqref{eq:wtcvx} again, we have
$$\sum_{k=1}^3(\wtcv{x}_{k,-1}\wt\mu_1^{-1})^2=0$$
implying that $\wtcv{x}_{k,-1}=0$ for all $k$ contradicting
Equation \eqref{eq:wtcvxmodule}. Thus, $\wt P_2(\wt\mu_1)$ and $\wt P_3(\wt\mu_1)$ must be linearly independent, if $\mu_1 \neq0.$

If $\mu_1=0$, then $\wtcv{x}_{1,-1}=0$.  Then, if $\wtcv{x}_{2,-1}$, $\wtcv{x}_{3,-1}$ are linearly
dependent, we would obtain
$$\sum_{k=1}^3(\wtcv{x}_{k,-1})^2=0,$$
which again gives $\wtcv{x}_{k,-1}=0$ for all $k$ contradicting Equation \eqref{eq:wtcvxmodule}.
\end{proof}

\begin{remark}
The constructed real holomorphic sections are uniquely determined by the residue $A_1(t, \lambda)$ at $z= p$ of the  potential $\eta_t$ (when fixing the sign in \eqref{eq:rho0}). The deformation of $A= A_1$ in the parameter $t$ can in fact be expressed by a Lax pair type equation
$$A' = [A, X],$$
for some $X\in  \Lambda^+ \mathfrak{sl}(2, \C)$.  Moreover, $X$ is unique up to adding $g \cdot A$, where $g \colon \lambda \mapsto g(\lambda) \in\C$ is holomorphic around $\lambda = 0.$

To see this recall
$$A= \begin{pmatrix} x_1 & x_2 + i x_3 \\ x_2- i x_3 & - x_1 \end{pmatrix}$$
satisfies $\det A= -1$ and thus $A^2= \Id.$ Therefore, we obtain $A' A + A A' = 0$ which gives 
$$x_1' x_1 + x_2'x_2  + x_3'x_3 = 0.$$
 Let
$$ A'= \begin{pmatrix} x_1' & x_2'+ i x_3'  \\  x_2' - ix_3' & -x_1' \end{pmatrix} \and X= \begin{pmatrix} \alpha & \beta  \\  \gamma & - \alpha \end{pmatrix}. $$  Then we can choose $X$ to be given by
\begin{equation}
\begin{split}
\alpha &= - \tfrac{1}{2}({x_2 - i x_3})(x_2' + ix_3') + x_1 B \\
\beta &= \tfrac{1}{2}x_1 (x_2' + ix_3')  + (x_2+ i x_3) B \\
\gamma &= - \tfrac{1}{2}x_1(x_2' - ix_3') + (x_2- i x_3)(B + a). 
\end{split}
\end{equation}

with  $B= -\tfrac{1}{2}(x_{2,0}' + ix_{3,0}') \frac{x_{1,-1}}{x_{2,-1}+i x_{3,-1}}$ chosen to remove the negative powers of $\lambda.$
\end{remark}

\subsection{Twistor lines}\label{sec:twist}
It remains to determine whether the real holomorphic sections constructed in Theorem \ref{thm:IFT} and Theorem \ref{blowuplimit} are actually twistor lines.
This finally determines the sign in
\eqref{eq:rho}. We start with the following observation:
\begin{lemma}\label{lem:signmean}
Denote by $x^{\pm}(t)\in(\mathcal W_a^{\geq -1})^3$ the solutions of the monodromy provided by Theorem \ref{thm:IFT} for the two possible choices \eqref{eq:a0b0c0} of $\rho$ in their initial data \eqref{eq:defcvxj}. 
We then have
\[x^-(-t)(-\lambda)=x^+(t)(\lambda)\]
for all $\lambda\in \mathbb D_a^*.$ 
\end{lemma}
\begin{proof}
This directly follows from the uniqueness part in the implicit function theorem, compare with the proof of \cite[Proposition 24]{HHT2}.
\end{proof}

\begin{remark}
This Lemma shows how to relate the two different choices of the sign for $\rho$ in the initial data by changing the sign for the deformation parameter. Since we will show that only one of the sign choices leads to actual twistor lines, we will restrict to solutions for $t>0$ in the following.
\end{remark}
\begin{lemma}\label{lem:wrongsignlim}
For small $t>0$,
let $x^-_{u,v}(t)\in(\mathcal W_a^{\geq -1})^3$ be the solution of the monodromy problem provided by Theorem \ref{thm:IFT} with initial data \eqref{eq:defcvxj} and $\rho <0$ in \eqref{eq:a0b0c0}, where $(u,v)\in \C^2\setminus\{0\}$ parametrizes the Higgs field via \eqref{eq:ambmcm}.
Let $s^-_{u,v}(t)$ be the corresponding real holomorphic section   of the parabolic Deligne-Hitchin moduli space. Then  the limit 
\[\lim_{r\to 0}s^-_{ru,rv} =s_{[u,v]}\]
is not a real holomorphic section, and $s^-_{ru,rv}$ are not twistor lines.
\end{lemma}
\begin{proof}
By Theorem \ref{blowuplimit}, the families of flat connections extend to $r=0$. This is true for both possible signs of $\rho$, and it can be directly computed that the corresponding limit for negative $\rho$ differs from \eqref{eq:blowuplimit} by an overall sign. Therefore, in the limit $(t,r)\to0$, the quasiparabolic lines (the eigenlines with respect to the
{\em positive} eigenvalue of the residue $A_j$ determined by $x_1, x_2, x_3$) at any fixed $\lambda_0\in\C^*$ are given by
\begin{equation}\label{eq:wrongparastr}\ell_1=(-\bar v,\bar u)^T\C,\quad\ell_2=(\bar u,\bar v)^T\C,\quad \ell_3=(\bar v,\bar u)^T\C,\quad\ell_4=(-\bar u,\bar v)^T\C\,.\end{equation}

On the other hand,
the quasiparabolic lines of $s^-_{ru,rv}$   for $r\neq0$ at $\lambda=0$  are given by  \eqref{eq:quasbef}. Hence, the parabolic structure at $\lambda=0$ differs from \eqref{eq:wrongparastr} for  $(u,v)\neq0$. Therefore, for 
$(u,v)\in\C^2\setminus\{0\}$, the $r \rightarrow 0$ limit cannot exist. 

But this contradicts the fact that the space of twistor lines is open and closed:
if $s^-_{ru,rv}$ would be twistor lines for all $r\neq0,$ the limit would exist and would be a twistor line as well. Thus
$s^-_{ru,rv}$ cannot be a twistor line
for all $(u,v)$.
\end{proof}

Lemma \ref{lem:wrongsignlim} shows that taking the limit $r\to0$ is delicate in general.
But using additional symmetries, we have
\begin{lemma}\label{lem:specialextension}
Consider the 4-punctured sphere given by $p=\exp(\tfrac{\pi i}{4}),$ and let $(\wt u,\wt v)=\tfrac{1}{\sqrt{2}}(1,\exp(\tfrac{\pi i}{4})).$
Consider the solution provided by Theorem \ref{blowuplimit} for $\rho >0$.
Then, we have at $r=0$ that \eqref{eq:blowuplimit} holds for all $t>0$ small enough.
In particular, the corresponding real holomorphic sections parametrised by $r$ and $t$ have a limit for $r\to0$ which is a twistor line.
\end{lemma}
\begin{proof}
The aim is to show that for this special choice of parameters in Theorem \ref{blowuplimit} and \eqref{eq:blowuplimit}
 holds for all $t\in(0,\tfrac{1}{4})$, where the implicit function theorem applies, rather than only at $(r,t) = 0$.

First, we show that the logarithmic connection given by \eqref{eq:blowuplimit}
with singular divisor determined by $p$ and quasiparabolic lines given by $(\wt u,\wt v)$ has unitary monodromy for all $t\in(0,\tfrac{1}{4})$. Due the choice of $p,$ there is an additional symmetry $z\mapsto iz$ which maps the set of singular points into itself. Moreover, the special choice of $(\wt u, \wt v)$ gives that the logarithmic connection is equivariant
with respect to this order four symmetry, and therefore descend to a logarithmic $\mathrm{SL}(2,\C)$ connection $\wt\nabla$ on the 3-punctured sphere. On the $3$-punctured sphere, any monodromy representation is uniquely determined (up to conjugation) by the parabolic weights, and $\wt \nabla$ must be unitary for $t \in (0, \tfrac{1}{4})$ , see \cite[Lemma 2.2]{Fuchs} for details. Moreover, the parabolic structure for the pull-back of $\wt\nabla$ to the 4-punctured sphere is uniquely determined and thus given by $(\wt u, \wt v)$. 

Next, consider for $(\wt u,\wt v)=\tfrac{1}{\sqrt{2}}(1,\exp(\tfrac{\pi i}{4}))$ and  $r\neq0$ the solution $(x_1,x_2,x_3)=x(t,p,r,\wt{u},\wt{v})$
provided by Theorem \ref{blowuplimit}.  Then we have
\[(x_1,x_2,x_3)(-\lambda)=-(x_1,x_3,x_2)(\lambda)\]
for all $\lambda\in\C^*$ by the uniqueness part of the implicit function theorem, together with the 
symmetry $z\mapsto iz$. As a consequence, the $r\to0$ limit has the corresponding symmetry as well.
But by Theorem \ref{blowuplimit}, point (2), the limit is given by a unitary logarithmic connection $\nabla$ independent of $\lambda$, symmetric with respect to $z\mapsto iz$. This gives that $\nabla$ descents to the 3-punctured sphere and thus has unitary monodromy for $t\in (0, \tfrac{1}{4})$. Therefore $\nabla$ is the pull-back of $\wt \nabla$, and thus \eqref{eq:blowuplimit} holds for all  small $t\in (0, \tfrac{1}{4})$. 
\end{proof}

\begin{theorem}\label{computingthecomponent}
The real sections constructed in Theorem \ref{thm:IFT} and Theorem \ref{blowuplimit} (for $t>0$ and $\rho>0$ ) are twistor lines.
\end{theorem}
\begin{proof}
For $p=\exp(\tfrac{\pi i}{4})$ and small rational $t$, this follows from Lemma \ref{lem:specialextension}
and Proposition \ref{pro:sufftwist}.  The general case then follows by continuous dependency of real holomorphic sections from the parameters $(p,t,r,\wt u,\wt v)$ (Theorem \ref{blowuplimit}) and $(p,t,u,v)$ (Theorem \ref{thm:IFT}), respectively,
together with Remark \ref{rem:sufftwist} for non-rational weights.
\end{proof}

\section{The hyper-K\"ahler structure and the non-abelian Hodge correspondence}\label{NAHCt=0}

In this section we explicitly describe the (rescaled) metric and the non-abelian Hodge correspondence at the limit $t=0$. We then compute the first order derivatives with respect to $t$ at $t=0.$

\subsection{The non-abelian Hodge correspondence at $t=0$}\label{sectionNAHt0}

The parabolic non-abelian Hodge correspondence on the rank $2$ hermitian bundle $\mathcal V$ is a diffeomorphism that associates to each stable strongly parabolic Higgs pair $(\dbar_{\mathcal V}, \Phi)$ the   logarithmic connection $\nabla^{\lambda=1}$ of the associated family of flat connections. In the case of Fuchsian potentials on a $4$-punctured sphere with parabolic weight $t \in (0, \tfrac{1}{4}), $ the underlying holomorphic structure is trivial and the Higgs pair is given by the strongly parabolic Higgs field $\Psi$ only. Due to the symmetry assumptions, $\Psi$ is fully determined by its residue at $p \in \C^+$ and since $\sum_{j=1}^3 x_{j,-1}^2 = 0,$ this residue is nilpotent.  On the other hand, the connection $1$-form for the flat SL$(2, \C)$-connection $\nabla^{\lambda=1}$ is determined by $A_1(\lambda = 1)$ satisfying $\det A_1 = -1$ and $\tr A_1 = 0$. Hence $A_1$ lies in the SL$(2, \C)$ adjoint orbit of $\begin{smatrix} 1 &0 \\ 0 &-1\end{smatrix}.$

Consider the complex 3-dimensional vector  space $\mathfrak{sl}(2,\C)$ with its
complex bilinear inner product
\[<\xi,\eta>=-\frac{1}{2}\text{tr}(\xi \eta).\]
Then its associated quadratic form is the determinant $\text{det}.$
Decompose $\mathfrak {sl}(2, \C)$ into real subspaces
\[\mathfrak{sl}(2,\C)=\mathfrak{su}(2)\oplus i\mathfrak{su}(2)\]
consisting of the subspace of skew-hermitian ($A = -\bar A^T$) and the subspace of hermitian symmetric $(A = \bar A^T)$ trace-free matrices.
Note that
$<.,.>$ is positive-definite on the 3-dimensional real subspace $\mathfrak{su}(2)$ and negative definite
on $i\mathfrak{su}(2).$ 

\begin{lemma}\label{classical-weierstrass}
There is a diffeomorphism between
the $\mathrm{SL}(2,\C)$ orbit through $\begin{smatrix}1&0\\0&-1\end{smatrix}$
without the hermitian symmetric matrices and the nilpotent $\mathrm{SL}(2,\C)$ orbit in $\mathfrak{sl}(2,\C)$.
\end{lemma}
\begin{proof}
Let $\Psi$ be an element in the nilpotent orbit. Then,
$\Phi=\Psi-\bar\Psi^T$ is skew-hermitian and there is a unique skew-hermitian $N\in\mathfrak{su}(2)$ of length 1 such that
\[<N,\Psi-\bar\Psi^T>=0 , \quad <N,i\Psi+i\bar\Psi^T> = 0\]
and 
\[N\times (\Psi-\bar\Psi^T):= \tfrac{1}{2}[N, \Psi-\bar\Psi^T] = i\Psi+i\bar \Psi^T.\]

Moreover, 
\begin{equation}\label{eq:apsia}A_\Psi:=-\sqrt{1+<\Phi,\Phi>}\, iN+\Phi \in\mathfrak{sl}(2,\C)\end{equation}
has determinant $-1$ and the map 
\[\Psi\longmapsto A_\Psi\] 
is smooth. Note that 
\[A_1(\lambda)=\lambda^{-1}\Psi-\sqrt{1+<\Phi,\Phi>}\, iN- \lambda\bar\Psi^T\] 
is of the form \eqref{initialproblem}. Plugging in the formulas for the Higgs field $\Psi$ in terms of the $(u,v)$ coordinates, $-\sqrt{1+<\Phi,\Phi> } \;iN$ is the constant term of the initial values \eqref{eq:rho0} with $\rho$ being positive. \\

For the converse direction, let $A\in\mathfrak{sl}(2,\C)$ be of determinant $-1$, such that $A$ is not hermitian symmetric, i.e.,
\[\Phi:=\frac{1}{2}(A-\bar A^T) \neq 0.\]
Since $\det(A)=-1$ and $<.,.>$ is positive definite on $\mathfrak{su}(2)$,
also 
\[\xi:=\tfrac{1}{2}(A+\bar A^T)\]
does not vanish. Define
\[N:=\frac{i}{\sqrt{-<\xi,\xi>}}\xi\in\mathfrak{su}(2).\]
Note that $<N,\Phi>=0$ since $\det(A)$ is real.
Then,
\[\Psi_A:=\tfrac{1}{2}(\Phi-i N\times \Phi)\]
is nilpotent, and using \eqref{eq:apsia}
\[A_{\Psi_A}=A.\]
Moreover, the map
$A\longmapsto\Psi_A$
is also smooth.
\end{proof}

Recall that we identify the nilpotent orbit with $\C^2 \setminus{(0, 0)}/\Z_2$ via the $2:1$ map
\[ \C^2\setminus\{(0,0)\} \longrightarrow \{\Psi\in\mathfrak{sl}(2,\C)\mid \Psi\neq0;\, \det(\Psi)=0\}, \quad (u,v)\longmapsto \Psi:=\begin{pmatrix} u\,v&-u^2\\v^2&-u\,v\end{pmatrix}.\]

Note that
\[\Psi \begin{pmatrix}u \\ v \end{pmatrix}=0.\]

In the following, we consider the {\em rescaled} space of Higgs bundles, i.e., 
we study Higgs fields of the form $t\Psi$ on parabolic structures with trivial underlying holomorphic bundle and parabolic weight $t>0$ small. On the deRham moduli space side, we consider logarithmic connections of the form $d+t\eta$
which converge to the trivial connection for $t\to0.$ We obtain
\begin{theorem}\label{the:nahdiffeot0}
The diffeomorphism in Lemma \ref{classical-weierstrass} extends to a diffeomorphism
of the blow-up of $\C^2/\Z_2$ at $(0,0)$ which is $T^*\C P^1$, to the
full adjoint $\mathrm{SL}(2,\C)$ orbit through $\begin{smatrix}1&0\\0&-1\end{smatrix}$ (including hermitian symmetric matrices).
This map is the limit of the non-abelian Hodge correspondence for $t \rightarrow 0$ for rescaled strongly parabolic Higgs fields
with  trivial underlying holomorphic bundle.
\end{theorem}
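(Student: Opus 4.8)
The plan is to extend the map $\Theta\colon\Psi\mapsto A_\Psi$ of Lemma~\ref{classical-weierstrass} across the exceptional divisor of $\mathrm{Bl}_0(\C^2/\Z_2)$ and to check that the extension is a global diffeomorphism onto the whole quadric. After the rescaling of Section~\ref{NAHCt=0}, the $t\to0$ limit of the non-abelian Hodge correspondence sends the Higgs field $\Psi=\Res_{\lambda=0}\eta_t$ — whose $(u,v)$-parametrization identifies the nilpotent orbit with $(\C^2\setminus\{(0,0)\})/\Z_2$ — to the residue $A_1(\lambda=1)$ of the flat connection $\nabla^{\lambda=1}$. Since $A_1(\lambda=1)=\sum_j\cvx_j(1)\mathfrak m_j=\sqrt{1+\langle\Phi,\Phi\rangle}\,iN+\Phi=A_\Psi$, the map to be extended is exactly $\Theta$. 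Its target is the affine quadric $\mathcal X:=\{A\in\sl(2,\C)\mid\det A=-1\}=\{x_1^2+x_2^2+x_3^2=1\}$, which is precisely the full adjoint $\SL(2,\C)$-orbit through $\minimatrix{1&0\\0&-1}$, and Lemma~\ref{classical-weierstrass} already realizes $\Theta$ as a diffeomorphism from the nilpotent orbit onto $\mathcal X$ minus its hermitian symmetric locus.

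First I would write the extension in blow-up coordinates. On the chart $\zeta=u^2$, $w=v/u$ of $\mathrm{Bl}_0(\C^2/\Z_2)$, with exceptional divisor $\{\zeta=0\}$ and $w$ the affine coordinate on $\C P^1$, the initial data read $\cvx_{j,-1}=\zeta\,c_j(w)$ with $(c_1,c_2,c_3)=\big(w,\tfrac12(w^2-1),\tfrac i2(1+w^2)\big)$, and $\cvx_{j,0}=-\hat q_j(w)\sqrt{|\zeta|^2+(1+|w|^2)^{-2}}$ with $(\hat q_1,\hat q_2,\hat q_3)=(1-|w|^2,\,2\Re w,\,-2\Im w)$. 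Hence the components $\hat x_j=\cvx_j(1)=\cvx_{j,0}+2i\,\Im(\cvx_{j,-1})$ of $A_\Psi$ are manifestly real-analytic in $(\zeta,\bar\zeta,w,\bar w)$ across $\zeta=0$; this is the coordinate form of the analytic extension at $r=0$ proved in Theorem~\ref{blowuplimit}(1), and it is here that the blow-up, rather than the singular quotient $\C^2/\Z_2$, is indispensable. At $\zeta=0$ one reads off $\hat x_j=-\hat q_j(w)/(1+|w|^2)$, so the restriction of the extended map $\widetilde\Theta$ to the divisor is minus the Hopf fibration $\S^3\to\S^2$, which descends to the standard diffeomorphism $\C P^1\xrightarrow{\sim}\S^2$ onto the hermitian symmetric sphere $\{A=\bar A^T,\ \det A=-1\}$, in agreement with Theorem~\ref{blowuplimit}(3).

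The heart of the argument is to upgrade $\widetilde\Theta$ to a diffeomorphism. Bijectivity follows from a partition argument: the interior maps bijectively onto $\mathcal X\setminus\S^2$ by Lemma~\ref{classical-weierstrass} and the divisor maps bijectively onto $\S^2$ by the Hopf map, and these strata partition source and target. For the local-diffeomorphism property the only new point is along the divisor, where, at a divisor point with image $A_0\in\S^2$, I would use the splitting $T_{A_0}\mathcal X=T_{A_0}\S^2\oplus\mathcal N$ into the hermitian tangent directions $T_{A_0}\S^2$ and the skew-hermitian normal directions $\mathcal N=\su(2)\cap\{\langle A_0,\cdot\rangle=0\}$. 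Directions tangent to the divisor go isomorphically onto $T_{A_0}\S^2$ since the Hopf map is a diffeomorphism; the two real normal directions are sent to $\partial_\zeta A_\Psi|_{\zeta=0}=\sum_j c_j(w)\mathfrak m_j$ and its conjugate, i.e.\ (using that the $\mathfrak m_j$ are hermitian) to the skew-hermitian vectors $2i\sum_j\Im(c_j)\mathfrak m_j$ and $2i\sum_j\Re(c_j)\mathfrak m_j$. These are $\R$-independent exactly because $c=(c_j)$ is a nonzero null vector ($\sum_j c_j^2=0$ forces $c$ not to be proportional to any real vector, so $\Re c$ and $\Im c$ are independent), and hence they span the $2$-dimensional space $\mathcal N$. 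Thus $d\widetilde\Theta$ is an isomorphism all along the divisor, and a bijective local diffeomorphism is a diffeomorphism.

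It remains to record that $\mathrm{Bl}_0(\C^2/\Z_2)$ is the minimal resolution of the $A_1$-singularity, hence $T^*\C P^1$ with the exceptional divisor as zero section, and that $\mathcal X$ is the full orbit; this identifies $\widetilde\Theta$ with the $t\to0$ non-abelian Hodge correspondence, the zero section corresponding to the hermitian symmetric sphere of flat unitary connections with vanishing Higgs field. The main obstacle is precisely the transition across the exceptional divisor: establishing that the correspondence extends smoothly there (which forces the passage to the blow-up) and, above all, that its differential remains invertible in the normal direction. The latter is the delicate point, but it reduces to the clean fact that the emerging Higgs direction $c(w)$ is null and nonzero; everything else is bookkeeping resting on Lemma~\ref{classical-weierstrass} and Theorem~\ref{blowuplimit}.
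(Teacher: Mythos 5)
Your proof is correct, and its skeleton coincides with the paper's: both arguments extend the map $\Psi\mapsto A_\Psi$ of Lemma \ref{classical-weierstrass} across the exceptional divisor in blow-up-type coordinates and identify the image of the divisor with the sphere of hermitian symmetric matrices of determinant $-1$. The difference is in how much gets verified. The paper's own proof is essentially a limit computation: writing $u=r\wt{u}$, $v=r\wt{v}$ with $(\wt{u},\wt{v})\in\S^3$, it checks that $A_{\Psi(u,v)}\to i\wt{N}$ as $r\to 0$ and that every hermitian symmetric matrix of determinant $-1$ occurs as such a limit; the smoothness of the extension is read off from the formula for $A_\Psi$ (and Theorem \ref{blowuplimit}), while the actual diffeomorphism property of the extended map --- injectivity together with invertibility of the differential in the directions transverse to the divisor --- is left implicit. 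Your proposal supplies precisely these missing steps: the stratified bijectivity argument (interior onto the quadric minus $\S^2$ via Lemma \ref{classical-weierstrass}, divisor onto $\S^2$ via the Hopf map, matching Theorem \ref{blowuplimit}(3)), and, more substantially, the computation that $d\widetilde\Theta$ along the divisor is block-diagonal with respect to the splitting of $T_{A_0}\mathcal X$ into hermitian and skew-hermitian parts, with the normal block invertible because the emerging Higgs direction $c(w)$ is a nonzero null vector, so that $\Re(c)$ and $\Im(c)$ are orthogonal of equal nonzero length and hence $\R$-independent. So your route buys a complete proof of the statement as formulated (a diffeomorphism of the full blow-up onto the full adjoint orbit), where the paper's argument, taken literally, only establishes the smooth extension and surjectivity onto the hermitian sphere; conversely, the paper's real polar coordinates give the quickest possible identification of the boundary limit $i\wt{N}$ itself. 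The only cosmetic caveat is that your appeal to Theorem \ref{blowuplimit}(1) is not really needed at $t=0$: the analyticity of the initial data across $\zeta=0$ follows directly from the closed formula $\sqrt{\zeta\bar\zeta+(1+|w|^2)^{-2}}$, exactly as you also observe.
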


\begin{proof}
For $(u, v) \rightarrow 0$ take $u= r \wt u$ and $v = r \wt v$ with $|\wt u|^2 = |\wt v|^2 = 1$ and $r\in  \R_{>0}$ and consider  $r \rightarrow 0.$  Let $ \Psi(u, v) = r^2  \Psi(\wt u, \wt v)$ be the associated nilpotent matrix.
Then the map $\Psi \mapsto A_{\Psi}$ extends to $r=0$ with 
$$\lim_{r \rightarrow 0} A_{\Psi(u,v)}  =- i \wt N$$

with $\wt N \in \mathfrak{su}(2, \C)$  of length $1$ satisfying 
 \[<\wt N,\wt\Psi-\overline{\wt \Psi}^T>=0  \and <\wt N,i\wt \Psi+i\overline{\wt\Psi}^T> = 0.\]

Conversely, all hermitian symmetric matrices of determinant $-1$ can be realized as a limit. It follows from Theorem \ref{thm:IFT} and Theorem \ref{blowuplimit} that this map is the non-abelian Hodge correspondence for $t \rightarrow 0$.
 \end{proof}

\subsection{The rescaled metric at $t=0$}\label{sec:res-metric}

{By Theorem \ref{the:nahdiffeot0} the rescaled Higgs bundle moduli space at $t=0$ is the completion of the nilpotent $\mathrm{SL}(2,\C)$ orbit, identified with the blow-up of $\C^2 \setminus\{(0,0)\}/\Z_2$ at the origin, which is mapped by the limit of the non-abelian Hodge correspondence at $t=0$
to the $\mathrm{SL}(2,\C)$ orbit through $\begin{smatrix} 1&0\\0&-1\end{smatrix}.$ The latter can be interpreted
as the rescaled limit of the Betti moduli space at the identity, compare with the vanishing of $Q_j$ in the proof of
Proposition \ref{prop:two-traces}.
Next,  we show the rescaled limit hyper-K\"ahler metric on the rescaled Higgs bundle moduli space at $t=0$ is the Eguchi-Hanson metric.
The scaling factor $\tfrac{1}{t}$ is chosen so that the central sphere $\C P^1$ in $\mathcal M_{Higgs}$, which is the moduli space of
semi-stable parabolic bundles (i.e., Higgs pairs with vanishing Higgs fields), has constant volume independent of the weight  $t$.
This is in accordance with the scaling factor $k$ in Proposition \ref{pro:sympupdown} when $t=\tfrac{l}{k}$ is rational.

\begin{lemma}\label{rescaledform}
Let $t\sim 0$ and consider the space of solutions $\mathcal M(t)$ provided by Theorem \ref{thm:IFT} and Theorem \ref{blowuplimit}, parametrized by $(u,v) \in \C^2 \setminus \{(0,0)\}$.
In terms of the parameters $x_1(t), x_2(t), x_3(t)$ in \eqref{symmetry-fix1-4}, the rescaled twisted holomorphic symplectic form is given by
\[\varpi= \frac{d x_2(t)\wedge d x_3(t)}{x_1(t)}.\]
\end{lemma}
\begin{proof}
By Proposition \ref{proId} the twisted holomorphic symplectic from is give by the Goldman symplectic form, which can be computed by \eqref{def:symf}. Moreover, since we consider Fuchsian systems, the integral on $\Sigma^0$ in \eqref{def:symf} vanishes and we only need to compute the boundary terms.
By construction all 4 residues $B_j=tA_j$ give the same contribution.
A direct computation shows that 
\begin{equation}\label{kirilov123}\begin{split}
\frac{1}{8\tr(B_1^2)}\tr[(B_1[d B_1, d B_1])= -\tfrac{t}{4} i(x_1 dx_2\wedge dx_3-x_2dx_1\wedge dx_3+x_3 dx_1\wedge dx_2).
\end{split}
\end{equation}
On the other hand, \[0=x_1 dx_1+x_2 dx_2+x_3 dx_3\] which combines with \eqref{kirilov123}  (together with the rescaling by $\tfrac{1}{t}$) to
\[\varpi=\tfrac{4 i}{t}\frac{1}{8\tr(B_1^2)}\tr[(B_1[d B_1\wedge d B_1])= \frac{dx_2\wedge dx_3}{x_1}\]
proving the lemma.
\end{proof}

For an explicit formula of the rescaled twisted symplectic form in Lemma \ref{rescaledform} at $t=0$
let $A=A_1(\lambda)$ be the residue of the rescaled potential $\tfrac{1}{t}\eta_{t=0}$ (see \eqref{etatttinfront}) at $t=0.$ Recall that $$A=\lambda^{-1}\matrix{uv&-u^2\\v^2&-uv}
+\rho\matrix{|u|^2-|v|^2& 2u\vc\\2\uc v&|v|^2-|u|^2}
+\lambda\matrix{-\uc\vc&-\vc^2\\\uc^2&\uc\vc},$$

i.e., we compute the symplectic form using the Higgs field coordinates $(u,v)$, where  $r^2=|u|^2+|v|^2$ and $\rho = \sqrt{1+r^{-4}}$. 
A direct computation then gives  at $t=0$
\begin{align}
\label{mathematica-omega0}
\varpi=2i&\left(-\frac{r^6+|v|^2}{\rho r^6}du\wedge d\uc
-\lambda^{-1}du\wedge dv
+\frac{\uc v}{\rho r^6} du\wedge d\vc\right.\\
&\left.-\frac{u\vc}{\rho r^6}d\uc\wedge dv
-\lambda d\uc\wedge d\vc
-\frac{r^6+|u|^2}{\rho r^6} dv\wedge d\vc
\right).\nonumber\end{align}
On the other hand,
$$\varpi=\lambda^{-1}(\omega_J+i\omega_K)-2\omega_I-\lambda(\omega_J-i\omega_K)$$
from which we obtain
\begin{equation*}
\begin{split}
\omega_I&=\frac{i}{\rho r^6}\left( (r^6+|v|^2)du\wedge d\uc-\uc v \,du\wedge d\vc+u\vc \,d\uc\wedge dv+(r^6+|u|^2) dv\wedge d\vc\right)\\
\omega_J&=i\left(-du\wedge dv+d\uc\wedge d\vc\right)\\
\omega_K&=-\left(du\wedge dv+d\uc\wedge d\vc\right).
\end{split}
\end{equation*}

\begin{proposition}\label{limitEH}
The rescaled hyper-K\"ahler metric 
of strongly parabolic Higgs bundles on the 4-punctured sphere at $t=0$ is the Eguchi-Hanson space modulo a $\Z_2\times\Z_2$ action.
\end{proposition}
\begin{proof}
Since $(u,v)$ corresponds to Higgs bundle coordinates, we use the complex structure $I$ to compute $g= \omega_I(., I.)$. Consider the tangent space basis
$$\mathcal B = \left( \tfrac{\del }{\del u},  \tfrac{\del }{\del \bar u},  \tfrac{\del }{\del v},  \tfrac{\del }{\del \bar v}\right)$$
Then the complex structure $I$ can be represented by the diagonal matrix
$\mathrm{diag}(i,-i,i,-i)$.
and we obtain 
\begin{eqnarray*}
g&=&\sqrt{1+r^{-4}}\Big[
du\otimes d\uc+d\uc\otimes du +dv\otimes d\vc+d\vc\otimes dv
-\frac{1}{r^2(1+r^4)}\Big(
u\uc(du\otimes d\uc+d\uc\otimes du)\\
&&
+\uc v(du\otimes d\vc+d\vc\otimes du)
+u\vc(dv\otimes d\uc+d\uc\otimes  dv)
+v\vc(dv\otimes d\vc+d\vc\otimes dv)\Big)\Big]
\end{eqnarray*}
which by \cite[Equation 2]{Lye} identifies with the Eguchi-Hanson metric with $n=2$ and $a=1$. 
\end{proof}
Obviously, the Eguchi-Hanson metric is independent of the conformal type of the 4-punctured sphere.
On the other hand, it is known that the conformal type of the underlying 4-punctured sphere can be recovered from the Hitchin metric of the
parabolic Higgs bundle moduli space, see \cite{CVZ} and \cite{FMSW}.
We will see the non-trivial dependence of the metric on the conformal type in the first order approximation of the metric in $t$.

\subsection{First order approximations}\label{firstderivative}
The advantage of our setup is that we obtain, analogously to \cite{HHT2}, an iterative way of computing the power series expansion of the twistor lines leading to an approach towards explicitly computing the non-abelian Hodge correspondence and all involved geometric quantities 
for the case at hand.
In this section, we  compute the first order derivatives of the parameters $(x_1, x_2, x_3)$ and derive in particular first order derivatives of the relative twisted holomorphic symplectic form. This yields derivatives of the non-abelian Hodge correspondence as well as derivatives of the hyper-K\"ahler metric.

\subsubsection{First order derivatives of the parameters}
Define
for  $1\leq j,k\leq 3$ and $\omega_k$, $\Omega_j$ given in \eqref{omega} and  \eqref{Omega}, respectively,
$$\Omega_{jk}(z)=\int_0^z\Omega_j\omega_k.$$
The shuffle relation (see e.g. \cite[Appendix]{HHT2}) then gives
\begin{equation}
\label{eq:shuffle-depth2}
\Omega_j\Omega_k=\Omega_{jk}+\Omega_{kj}.
\end{equation}
Let  $()'$ and $()''$ denote the first and second order derivatives of a quantity with respect to $t$ at $t=0$. Then the following proposition holds.
\begin{proposition}
\label{prop:derivatives}
The first order derivatives $x'_j$ for $j=1,2,3$ are polynomials of degree at most 2 in $\lambda$:
$$x'_j=x'_{j,0}+x'_{j,1}\lambda+x'_{j,2}\lambda^2=x'_{j,0}+(x'_j)^+$$
with the positive parts given by
\begin{equation}
\label{eq:x'pos}
\begin{cases}
(x'_1)^+=\displaystyle\frac{4i}{\pi}\,\Im(\Omega_{21}(1)+\Omega_{31}(i))\,(\cvx_2\cvx_3)^+\\
(x'_2)^+=\displaystyle\frac{-4i}{\pi}\,\Im(\Omega_{31}(i))\,(\cvx_1\cvx_3)^+\\
(x'_3)^+=\displaystyle\frac{-4i}{\pi}\,\Im(\Omega_{21}(1))\,(\cvx_1\cvx_2)^+,
\end{cases}
\end{equation}
and the constant terms given by
\begin{equation}
\label{eq:x'0}
\begin{cases}
x'_{1,0}=\displaystyle\frac{-1}{\rho r^4}\left((|u|^2-|v|^2)X-2\rho u v Y\right)\\
x'_{2,0}=\displaystyle\frac{-1}{\rho r^4}\left(2\,\Re(u\,\overline{v})X-\rho(v^2-u^2)Y\right)\\
x'_{3,0}=\displaystyle\frac{-1}{\rho r^4}\left(2\,\Im(u\,\overline{v})X-\rho i(u^2+v^2)Y\right),
\end{cases}
\end{equation}
where $r^2=|u|^2+|v|^2$, $\rho$ satisfying \eqref{eq:rho} and 
\begin{equation}
\begin{split}
X&=\sum_{j=1}^3\cvx_{j,-1}x'_{j,1}\\
Y&=\sum_{j=1}^3\left(\cvx_{j,-1}x'_{j,2}+\cvx_{j,0}x'_{j,1}\right),
\end{split}
\end{equation}
with $x'_{j,1}$ and $x'_{j,2}$ determined by \eqref{eq:x'pos}.
\end{proposition}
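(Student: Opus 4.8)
\subsection*{Proof proposal}

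The plan is to compute the derivatives by expanding every object to second order in $t$ and then imposing, order by order, the defining equations \eqref{eq:monodromy-problem2}. First I would expand the fundamental solution $\Phi_t$ of \eqref{eq:cauchy} through its iterated-integral (Chen) series. Writing $\eta_t=t\sum_j x_j(t)\,\mathfrak m_j\omega_j$ with $x_j(t)=\cvx_j+t\,x'_j+O(t^2)$, the first two orders are
\begin{equation*}
\Phi_t(z)=\Id+t\sum_j\cvx_j\Omega_j(z)\mathfrak m_j+t^2\Big(\sum_j x'_j\Omega_j(z)\mathfrak m_j+\sum_{j,k}\cvx_j\cvx_k\,\Omega_{jk}(z)\,\mathfrak m_j\mathfrak m_k\Big)+O(t^3),
\end{equation*}
so that the depth-two integrals $\Omega_{jk}$ enter precisely at order $t^2$, alongside the unknown derivatives $x'_j$. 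Setting $\mathcal P=\Phi_t(1)$, $\mathcal Q=\Phi_t(i)$ and inserting these into Proposition \ref{prop:traces}, I would expand $\mathfrak p,\mathfrak q,\mathfrak r$ to order $t^2$. By Proposition \ref{prop:derivative-pqr} the order-$t$ term of $\mathfrak p$ is $2\pi\cvx_3$ (using $\Omega_3(1)=\pi i$ from \eqref{eq:easy-integrals}), while a direct computation of the order-$t^2$ term, using the products $\mathfrak m_j\mathfrak m_k$ and the shuffle relation \eqref{eq:shuffle-depth2} to collapse the two depth-two integrals into one, gives
\begin{equation*}
\mathfrak p=2\pi\cvx_3\,t+\big(2\pi x'_3+4\,\Omega_{21}(1)\,\cvx_1\cvx_2\big)t^2+O(t^3),
\end{equation*}
and analogously the order-$t^2$ terms of $\mathfrak q$ and $\mathfrak r$ are governed by $\Omega_{31}(i)\,\cvx_1\cvx_3$ and by $\big(\Omega_{21}(1)+\Omega_{31}(i)\big)\cvx_2\cvx_3$.

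Next I would extract the positive parts from the reality conditions $\mathfrak p=\mathfrak p^*$ and $\mathfrak q=\mathfrak q^*$ (the third, $\mathfrak r=\mathfrak r^*$, being automatic by Proposition \ref{prop:two-traces}). Since $*$ acts on Laurent monomials by $\lambda^n\mapsto(-1)^n\lambda^{-n}$ with conjugation of coefficients, and since $\cvx_1\cvx_2=(\cvx_1\cvx_2)^*$ (each $\cvx_j=\cvx_j^*$ and $*$ is multiplicative), the order-$t^2$ reality equation asserts that $2\pi x'_3+4\,\Omega_{21}(1)\,\cvx_1\cvx_2$ equals its own $*$-image. Because $x'_3$ carries only nonnegative powers of $\lambda$ whereas $\cvx_1\cvx_2$ has degree between $-2$ and $2$, matching the coefficient of $\lambda^n$ for $n\ge 3$ forces $x'_{3,n}=0$, which proves that $x'_3$ is a polynomial of degree at most two; matching the coefficients of $\lambda$ and $\lambda^2$ then yields exactly \eqref{eq:x'pos} for $(x'_3)^+$, the factor $\Im(\Omega_{21}(1))$ arising from $\overline{\Omega_{21}(1)}-\Omega_{21}(1)=-2i\,\Im(\Omega_{21}(1))$. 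The same argument applied to $\mathfrak q$ and $\mathfrak r$ produces $(x'_2)^+$ and $(x'_1)^+$.

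Finally, the constant terms $x'_{j,0}$ come from the normalization $\mathcal K=\sum_j x_j^2=1$. Differentiating at $t=0$ gives the Laurent identity $\sum_j\cvx_j x'_j=0$; its coefficients of $\lambda^{-1},\lambda^0,\lambda^1$ are the three linear equations
\begin{equation*}
\textstyle\sum_j\cvx_{j,-1}x'_{j,0}=0,\qquad \sum_j\cvx_{j,0}x'_{j,0}=-X,\qquad \sum_j\overline{\cvx_{j,-1}}\,x'_{j,0}=Y,
\end{equation*}
where $X,Y$ are the combinations of the now-known $x'_{j,1},x'_{j,2}$ in the statement and the third equation uses $\cvx_{j,1}=-\overline{\cvx_{j,-1}}$ (the coefficients of $\lambda^2,\lambda^3$ must vanish identically, providing a consistency test). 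Writing $a=(\cvx_{j,-1})$ and $b=(\cvx_{j,0})$ with the complex bilinear dot product on $\C^3$, these decouple once one records $a\cdot a=0$, $a\cdot b=0$ (from \eqref{eq:det1}) and hence $\overline a\cdot b=\overline{a\cdot b}=0$ (as $b$ is real), together with $|a|^2=\tfrac12 r^4$ and $b\cdot b=\rho^2 r^4$ (from \eqref{eq:ambmcm}, \eqref{eq:a0b0c0}). Solving gives $x'_{j,0}=-\rho^{-2}r^{-4}X\,\cvx_{j,0}+2r^{-4}Y\,\cvx_{j,-1}$, which upon substituting \eqref{eq:ambmcm}, \eqref{eq:a0b0c0} is precisely \eqref{eq:x'0}. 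The main obstacle is the order-$t^2$ bookkeeping in Proposition \ref{prop:traces}, especially for $\mathfrak r$: one must track which products $\mathfrak m_j\mathfrak m_k$ feed into the relevant matrix entries and verify that the shuffle relation collapses the depth-two integrals into the single combinations above.
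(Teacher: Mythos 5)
Your overall strategy --- second-order expansion of $\Phi_t$ via the Chen series, extraction of the positive parts from the reality conditions, and the constant terms from $\mathcal K'=0$ --- is essentially the paper's strategy, and most steps check out: your expansion of $\Phi_t$, the $t^2$-coefficient $2\pi x_3'+4\Omega_{21}(1)\cvx_1\cvx_2$ of $\mathfrak p$, the degree-$\leq 2$ argument, and the constant-term computation are all correct. In particular, solving the linear system by expanding $(x'_{j,0})_j$ in the basis $\left\{(\cvx_{j,-1}),(\overline{\cvx_{j,-1}}),(\cvx_{j,0})\right\}$ and using $a\cdot a=a\cdot b=\bar a\cdot b=0$, $|a|^2=\tfrac12 r^4$, $b\cdot b=\rho^2 r^4$ is a clean alternative to the paper's Cramer's rule and does reproduce \eqref{eq:x'0}.

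The genuine gap is in your derivation of $(x'_1)^+$. You assert that the $t^2$-coefficient of $\mathfrak r$ is "governed by $\big(\Omega_{21}(1)+\Omega_{31}(i)\big)\cvx_2\cvx_3$", in analogy with $\mathfrak p$ and $\mathfrak q$. That is not what the direct computation gives. Tracking the matrix entries of Proposition \ref{prop:traces} (this is carried out in Remark \ref{remark:r''} of the paper) yields
$$\mathfrak r''=4\pi x'_1-8\,\cvx_2\cvx_3\left(\Omega_{23}(1)+\Omega_{32}(i)+\pi^2\right),$$
i.e.\ \emph{different} depth-two integrals ($\Omega_{23},\Omega_{32}$ rather than $\Omega_{21},\Omega_{31}$) and the \emph{opposite} sign relative to $\mathfrak p'',\mathfrak q''$ --- the sign is exactly what produces the $+\tfrac{4i}{\pi}$ (rather than $-\tfrac{4i}{\pi}$) in \eqref{eq:x'pos}. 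Imposing $\mathfrak r''=(\mathfrak r'')^*$ therefore gives $(x'_1)^+=\tfrac{4i}{\pi}\,\Im\big(\Omega_{23}(1)+\Omega_{32}(i)\big)(\cvx_2\cvx_3)^+$, and to match the stated formula you would need the identity $\Omega_{23}(1)+\Omega_{32}(i)=\Omega_{21}(1)+\Omega_{31}(i)$. This is precisely the non-trivial identity \eqref{eq:Omega-identity}, which the paper can only prove \emph{after} the proposition, using the character variety equation together with $\sum_j\cvx_j x'_j=0$; taking it as an input here is circular unless you supply an independent proof. The fix is the paper's trick, which avoids $\mathfrak r''$ altogether: from $\mathcal K'=0$ one has $\cvx_1x'_1=-\cvx_2x'_2-\cvx_3x'_3$; applying $*$ (using $\cvx_j^*=\cvx_j$), subtracting, and inserting the already-established relations \eqref{eq:p''star} and \eqref{eq:q''star} for $x'_3-(x'_3)^*$ and $x'_2-(x'_2)^*$ gives
$$4\pi \cvx_1\big(x'_1-(x'_1)^*\big)=16 i\,\cvx_1\cvx_2\cvx_3\,\Im\big(\Omega_{21}(1)+\Omega_{31}(i)\big);$$
dividing by $\cvx_1\not\equiv 0$ and taking positive parts yields the stated formula for $(x'_1)^+$. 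Replace your $\mathfrak r$-step by this argument (or prove \eqref{eq:Omega-identity} independently), and your proof is complete.
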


 \begin{remark}
The required $\Omega$ integrals are computed in Proposition \ref{prop:integrals2} to be
$$\Im(\Omega_{21}(1))=2\pi\log\left|\frac{p^2-1}{2p}\right|\and
\Im(\Omega_{31}(i))=-2\pi\log\left|\frac{p^2+1}{2p}\right|.$$

\end{remark}
\begin{proof}

Let $\Phi$ be the fundamental solution of the equation $d\Phi_t=\Phi_t\eta_t$ with $\Phi_t(0) = \Id.$ Recall that $\eta_{t=0} =0$ and therefore $\Phi_{t=0} = \Id.$ Differentiating the equation $d\Phi_t=\Phi_t\eta_t$ twice at $t=0$ we thus obtain
$$d\Phi''=\eta''+2\Phi'\eta',$$
hence
$$\Phi''(z)=\int_0^z(\eta''+2\Phi'\eta').$$
Then we have using the proof of Proposition \ref{prop:derivative-pqr}
$$\eta'=\sum_{j=1}^3 \cvx_j\omega_j\mathfrak m_j, \quad 
\Phi'=2\sum_{j=1}^3 \cvx_j\Omega_j\mathfrak m_j, \quad
\eta''=2\sum_{j=1}^3 x'_j\omega_j\mathfrak m_j.$$

This gives
$$\int_0^z\eta''=2\matrix{x'_1\Omega_1&x'_2\Omega_2+ix'_3\Omega_3\\ x'_2\Omega_2-ix'_3\Omega_3&-x'_1\Omega_1}$$

and 
$$\int_0^z \Phi'\eta'=\matrix{\sum_{j=1}^3\cvx_j^2\Omega_{jj}+i\cvx_2\cvx_3(\Omega_{32}-\Omega_{23})&
\cvx_1\cvx_2(\Omega_{12}-\Omega_{21})+i\cvx_1\cvx_3(\Omega_{13}-\Omega_{31})\\
\cvx_1\cvx_2(\Omega_{21}-\Omega_{12})+i\cvx_1\cvx_3(\Omega_{13}-\Omega_{31})
&\sum_{j=1}^3\cvx_j^2\Omega_{jj}+i\cvx_2\cvx_3(\Omega_{23}-\Omega_{32})}.$$
Using Leibniz rule, the shuffle relation \eqref{eq:shuffle-depth2} and that we solved Equation \ref{eq:easy-integrals}  we obtain
\begin{eqnarray}\label{eq:p''}
\mathfrak p''&=&\mathcal P''_{12}-\mathcal P''_{21}+2(\mathcal P'_{11}\mathcal P'_{21}-\mathcal P'_{12}\mathcal P'_{22})\nonumber\\
&=&-4i x'_3\Omega_3(1)+4\cvx_1\cvx_2\left(\Omega_{21}(1)-\Omega_{12}(1)\right)+4\cvx_1\cvx_2\Omega_1(1)\Omega_2(1)\nonumber\\
&=&4\pi x'_3+8\cvx_1\cvx_2\Omega_{21}(1)
\end{eqnarray}

\begin{eqnarray}\label{eq:q''}
\mathfrak q''&=&i(\mathcal Q''_{21}+\mathcal Q''_{12})+2i(\mathcal Q'_{11}\mathcal Q'_{21}+\mathcal Q'_{12}\mathcal Q'_{22})\nonumber\\
&=&4i x'_2\Omega_2(i)-4 \cvx_1\cvx_3(\Omega_{13}(i)-\Omega_{31}(i))+4\cvx_1\cvx_3\Omega_1(i)\Omega_3(i)\nonumber\\
&=&4\pi x'_2+8\cvx_1\cvx_3\Omega_{31}(i).
\end{eqnarray}

Since we have solved $\mathfrak p(t)=\mathfrak p(t)^*$ for all $t$, we have
$\mathfrak p''=(\mathfrak p'')^*$. Moreover, $\cvx_j=\cvx_j^*,$ hence
\begin{equation}
\label{eq:p''star}
4\pi x'_3-4\pi(x'_3)^*=-16 i\,\cvx_1\cvx_2\Im(\Omega_{21}(1)).
\end{equation}
Projecting onto $\mathcal W^+$ and remembering that $x'\in\mathcal W^{\geq 0},$ we obtain the formula for $(x'_3)^+$ stated in Equation
\eqref{eq:x'pos}, and $(\cvx_1\cvx_2)^+$ is a degree-2 polynomial.
In the same vein $\mathfrak q''=(\mathfrak q'')^*$ gives
\begin{equation}
\label{eq:q''star}
4\pi x'_2-4\pi(x'_2)^*=-16 i\,\cvx_1\cvx_3\Im(\Omega_{31}(i))
\end{equation}
which determines $(x'_2)^+$.

For  $(x'_1)^+$ consider the equation $\mathcal K=1$ which holds for all $t$. Therefore, \begin{equation}
\label{eq:K'}
 \mathcal K'=0=2\sum_{j=1}^3 \cvx_j x'_j.
\end{equation}
Then $\mathcal K'=\mathcal K'^*$ and Equations \eqref{eq:p''star}, \eqref{eq:q''star} give the equation 
\begin{equation}
\label{eq:r''star}
4\pi \cv{x}_1 x'_1-4\pi\cv{x}_1(x'_1)^*=16 i\,\cvx_1\cvx_2\cvx_3\Im(\Omega_{21}(1)+\Omega_{31}(i)).
\end{equation}
Dividing by $\cv{x}_1\not\equiv 0$ and taking the positive part determines $(x'_1)^+$.

To compute the constant terms, we consider the coefficients of $\lambda^{-1}$, $\lambda^0$ and $\lambda$ in $\mathcal K$:
\begin{equation*}
\begin{split}
\mathcal K'_{-1}&=2\sum_{j=1}^3\cvx_{j,1}x'_{j,0}=0\\
\mathcal K'_0&=2\sum_{j=1}^3(\cvx_{j,0}x'_{j,0}+\cvx_{j,-1}x'_{j,1})=0\\
\mathcal K'_1&=2\sum_{j=1}^3(\cvx_{j,1}x'_{j,0}+\cvx_{j,0}x'_{j,1}+\cvx_{j,-1}x'_{j,2})
\end{split}
\end{equation*}
which yield the system of equations
$$\begin{cases}
{\displaystyle \sum_{j=1}^3}\cvx_{j,-1} x'_{j,0}=0\\
{\displaystyle \sum_{j=1}^3}\cvx_{j,0} x'_{j,0}=-X\\
{\displaystyle \sum_{j=1}^3}\cvx_{j,1} x'_{j,0}=-Y
\end{cases}$$
with $X$, $Y$ as in Proposition \ref{prop:derivatives}.
Its determinant simplifies to
$$\det(\cvx_{j,k})_{1\leq j\leq 3,-1\leq k\leq 1}=\frac{i}{2}\rho r^6.$$
Using the Cramer rule, we obtain \eqref{eq:x'0} after simplification.
\end{proof}

\begin{remark}\label{remark:r''}
 The second derivative of $\mathfrak r$ is computed in the following. Through the character variety equation and the formulas for $\mathfrak p''$ and $\mathfrak q''$ it will give rise to an identity between $\Omega$-integrals. Using Proposition \ref{prop:traces} we have
\begin{eqnarray*}
\mathfrak r''&=&i\big[\mathcal P''_{22}+2\mathcal P'_{22}\mathcal Q'_{11}+\mathcal Q''_{11}+2\mathcal P'_{12}\mathcal Q'_{21}-2\mathcal P'_{21}\mathcal Q'_{12}-\mathcal P''_{11}-2\mathcal P'_{11}\mathcal Q'_{22}-\mathcal Q''_{22}\\
&&+(\mathcal P'_{22}+\mathcal Q'_{11})^2
+(\mathcal P'_{12}+\mathcal Q'_{12})^2
-(\mathcal P'_{21}+\mathcal Q'_{21})^2
-(\mathcal P'_{11}+\mathcal Q'_{22})^2\big]
\\
&=&i\big[\mathcal P''_{22}-\mathcal P''_{11}+\mathcal Q''_{11}-\mathcal Q''_{22}
+2(\mathcal P'_{12}-\mathcal P'_{21})(\mathcal Q'_{12}+\mathcal Q'_{21})
+(\mathcal P'_{12})^2-(\mathcal P'_{21})^2+(\mathcal Q'_{12})^2-(\mathcal Q'_{21})^2\big]\end{eqnarray*}
where we used that $\mathcal P'_{11}+\mathcal P'_{22}=\mathcal Q'_{11}+\mathcal Q'_{22}=0$.
This gives
\begin{eqnarray}
\mathfrak r''&=& i\big[-4x'_1\Omega_1(1)+4i \cvx_2\cvx_3(\Omega_{23}(1)-\Omega_{32}(1))
+4x'_1\Omega_1(i)-4i\cvx_2\cvx_3(\Omega_{23}(i)-\Omega_{32}(i))\nonumber\\
&&+8i\cvx_2\cvx_3\Omega_3(1)\Omega_2(i)
+4i\cvx_2\cvx_3\Omega_3(1)\Omega_2(1)
+4i\cvx_2\cvx_3\Omega_2(i)\Omega_3(i)\big]\nonumber\\
&=&4\pi x'_1-8\cvx_2\cvx_3\left (\Omega_{23}(1)+\Omega_{32}(i)+\pi^2 \right)
\label{eq:r''}\end{eqnarray}
using that we solved Equation \ref{eq:easy-integrals} and the shuffle relation \eqref{eq:shuffle-depth2}.
Observe the similarity with \eqref{eq:p''} and \eqref{eq:q''}.
Remember from the proof of Proposition \ref{prop:two-traces}
that $\mathfrak p$, $\mathfrak q$ and $\mathfrak r$ satisfy the character variety equation for all $t$
\begin{equation}
\label{mathematica-fricke}
4\cos(2\pi t)^2 + 4(\mathfrak p^2+\mathfrak q^2+\mathfrak r^2-1)+8(-1)^j \mathfrak{pqr}=0
\end{equation}
with either $j=1$ or $j=2$.
Taking the third order derivative of the above equation yields (since $\mathfrak p (0) = \mathfrak q (0) = \mathfrak r (0) = 0$)
$$\mathfrak{p'p''+q'q''+r'r''}+2(-1)^j\mathfrak{p'q'r'}=0$$
Using Proposition \ref{prop:derivative-pqr} and Equations \eqref{eq:p''}, \eqref{eq:q''}, \eqref{eq:r''} we then obtain
$$8\pi^2(\cvx_1 x'_1+\cvx_2 x'_2+\cvx_3 x'_3)
+16\pi\cvx_1\cvx_2\cvx_3\big(\Omega_{21}(1)+\Omega_{31}(i)-\Omega_{23}(1)-\Omega_{32}(i)-\pi^2+(-1)^j\pi^2\big)=0.$$
The first summand vanishes, as $x_1^2+x_2^2+x_3^2=1$ for all $t$, thus
$$\Omega_{21}(1)+\Omega_{31}(i)-\Omega_{23}(1)-\Omega_{32}(i)-\pi^2+(-1)^j\pi^2=0.$$
In the most symmetric case of the 4-punctured sphere where $p=e^{i\pi/4}$ we have by symmetry
$$\Omega_{31}(i)=-\Omega_{21}(1)\and
\Omega_{32}(i)=-\Omega_{23}(i).$$
Hence $j=2$ and we have proved the following identity that holds for all $p\in\C^+$:
\begin{equation}
\label{eq:Omega-identity}
\Omega_{23}(1)+\Omega_{32}(i)=\Omega_{21}(1)+\Omega_{31}(i).
\end{equation}
\end{remark}

\begin{proposition}
\label{prop:integrals2}
For $p\in\C^+$ we have
$$\Omega_{21}(1)=2\pi i\log\left(\frac{p^2-1}{2ip}\right)\quad\text{and}\quad\Omega_{31}(i)=-2\pi i\log\left(\frac{p^2+1}{2p}\right),$$
where $\log$ denote the principal valuation of the logarithm on $\C\setminus\R^-$.
\end{proposition}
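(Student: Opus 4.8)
The plan is to reduce each depth-two integral to an iterated integral over the closed loop $\gamma_1$ around the puncture $p_1$, which can then be evaluated by a residue/monodromy computation, exactly mirroring the way \eqref{eq:easy-integrals} was obtained from $\oint_{\gamma_1}\omega_j = 2\pi i$. The key observation is that iterated integrals of the (single-valued, meromorphic) forms $\omega_j$ along a \emph{fixed} path are honest numbers, so the symmetry relations \eqref{eq:symmetry-omega} transport them exactly and automatically carry the correct branch information. Concretely, writing $\int_c\omega_j\omega_k$ for the iterated integral with $\omega_j$ the inner form, I would first establish the folding identity
\[
\int_{[0,\infty)}\omega_2\,\omega_1 = 2\,\Omega_{21}(1),
\]
obtained by splitting the positive real axis at $1$, using that $\tau(z)=1/z$ maps $[1,\infty)$ to $[0,1]$ with $\tau^*\omega_2=\omega_2$, $\tau^*\omega_1=-\omega_1$, together with the path-composition formula, the reversal formula, and the shuffle relation \eqref{eq:shuffle-depth2}. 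The analogous fold of the imaginary axis by $\tau\delta$ (which fixes $i$, with $(\tau\delta)^*\omega_2=-\omega_2$, $(\tau\delta)^*\omega_1=-\omega_1$) collapses $\int_{[0,i\infty)}\omega_1\omega_2$ to the pure depth-one product $2\,\Omega_1(i)\Omega_2(i)$, and the same mechanism gives $\Omega_2(\infty)=0$ and $\Omega_1(i\infty)=2\Omega_1(i)$.

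Assembling $\gamma_1$ as the concatenation of the real axis with the reversed imaginary axis and applying composition and reversal once more, I expect to arrive at
\[
\Omega_{21}(1)=\tfrac12\int_{\gamma_1}\omega_2\,\omega_1-\Omega_1(i)\,\Omega_2(i),
\]
with the entirely parallel relation for $\Omega_{31}(i)$ using the inner form $\omega_3$ on the imaginary axis (where $(\tau\delta)^*\omega_3=\omega_3$ produces the factor $2$). Thus the whole problem is reduced to the single loop integral $\int_{\gamma_1}\omega_2\omega_1$ together with the depth-one values $\Omega_1(i)=-2\log p$, $\Omega_2(i)=-\pi i$, $\Omega_1(1)=i\pi-2\log p$, $\Omega_3(1)=\pi i$, which I would record first as explicit logarithms by direct integration, tracking the principal branch on $\C\setminus\R^-$ as in the depth-one case.

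To evaluate $\int_{\gamma_1}\omega_2\omega_1$ I would deform $\gamma_1$ (rel.\ basepoint $0$, and homotopy-invariantly) to a tail $P$ from $0$ to a point $q$ near $p_1$, a small circle $\sigma$ about $p_1$, and the return $\bar P$. The three-path composition formula collapses the tail contributions and leaves
\[
\int_{\gamma_1}\omega_2\,\omega_1=\int_{\sigma}\omega_2\,\omega_1+2\pi i\,(c_2-c_1),
\]
where $c_j=\lim_{w\to p_1}\bigl(\Omega_j(w)-\log(w-p_1)\bigr)$ is the regularized finite part of $\Omega_j$ at $p_1$ (the residues of $\omega_1,\omega_2$ at $p_1$ both being $1$), and the divergent $\log(q-p_1)$ cancels between the tail and the circle. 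A direct parametrization of $\sigma$ gives $\int_\sigma\omega_2\omega_1\to-2\pi^2$. Substituting the depth-one values and simplifying $c_2-c_1$ then yields precisely the advertised single logarithm times $2\pi i$; the computation for $\Omega_{31}(i)$ runs identically with $c_3$ in place of $c_2$ and the loop integral $\int_{\gamma_1}\omega_3\omega_1$.

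The main obstacle is the branch bookkeeping in the finite parts $c_j$: evaluating them with \emph{principal} logarithms produces the correct shape $2\pi i\log(\,\cdot\,)$ but is off by $2\pi i\log p$–type corrections, since $c_j$ must be computed with the value of $\Omega_j$ \emph{continued along the actual path} (through $\infty$ and along the axes) to $p_1$, not the naive principal branch. Pinning down these $2\pi i\Z$ shifts is the crux, and I would fix them either by following the continuation explicitly or, more robustly, by checking the final answers against the already-proven identity \eqref{eq:Omega-identity} and against a symmetric special value such as $p=e^{i\pi/4}$, which removes all ambiguity.
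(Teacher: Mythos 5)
Your route is genuinely different from the paper's, and its skeleton is sound. The paper proves the formulas first for $p=e^{i\varphi}\in\C^+\cap\S^1$ by importing the antisymmetric combinations $\Omega_{21}(1)-\Omega_{12}(1)$ and $\Omega_{31}(i)-\Omega_{13}(i)$ from \cite[Proposition 35]{HHT2}, adding the symmetric combinations supplied by the shuffle relation \eqref{eq:shuffle-depth2} and the depth-one values, solving, and then extending to all $p\in\C^+$ by the identity theorem, both sides being holomorphic in $p$. You instead work at general $p$ and compute everything from scratch: your folding identities check out (splitting $[0,\infty)$ at $1$ and pulling back by $\tau$ does give $\int_{[0,\infty)}\omega_2\omega_1=2\,\Omega_{21}(1)$, the $\tau\delta$-fold does collapse $\int_{[0,i\infty)}\omega_1\omega_2$ to $2\,\Omega_1(i)\Omega_2(i)$, and composition/reversal then yield $\Omega_{21}(1)=\tfrac12\int_{\gamma_1}\omega_2\omega_1-\Omega_1(i)\Omega_2(i)$); homotopy invariance of the depth-two integrals is available because any two holomorphic $1$-forms on a Riemann surface wedge to zero; and the tail--circle--tail computation gives exactly $\int_{\gamma_1}\omega_2\omega_1=2\pi i(c_2-c_1)-2\pi^2$, the depth-two tail terms cancelling by shuffle because both residues at $p_1$ equal $1$. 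I verified that substituting the principal-branch value of $c_2-c_1$ into your formula reproduces $2\pi i\log\bigl(\tfrac{p^2-1}{2ip}\bigr)$, so the method really lands on the stated answer. What each approach buys: the paper's proof is short but outsources the hard depth-two input to \cite{HHT2}, and only on $|p|=1$; yours is self-contained, works directly for every $p\in\C^+$, and the loop-regularization technique would also produce the other depth-two constants (e.g.\ those entering \eqref{eq:new-identities}).

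Two corrections to your final, constant-fixing step. First, your worry that $c_j$ must be computed along a continuation ``through $\infty$ and along the axes'' is unfounded: since $p_1$ is the only puncture in the first quadrant, $P\sigma\bar P$ with $P$ the \emph{straight segment} from $0$ towards $p_1$ is homotopic to $\gamma_1$ rel basepoint, so by homotopy invariance the $c_j$ are finite parts along that short segment, where the branch tracking is elementary. Second, checking against \eqref{eq:Omega-identity} cannot remove the $2\pi i\Z$ ambiguity: that identity involves $\Omega_{23}(1)$ and $\Omega_{32}(i)$, which your computation never produces, so it is insensitive to the shifts you want to detect. The special-value check, by contrast, is rigorous once phrased as follows: your exact formula $\Omega_{21}(1)=\pi i(c_2-c_1)-\pi^2-\Omega_1(i)\Omega_2(i)$ shows that the difference between $\Omega_{21}(1)$ and the claimed closed form lies in the discrete set $2\pi^2\Z$ (namely $\pi i$ times the $2\pi i\Z$ ambiguity of the principal-log evaluation of $c_2-c_1$); all quantities involved are continuous in $p$ on the connected set $\C^+$, so this defect is constant, and one evaluation at $p=e^{i\pi/4}$ (by explicit continuation along the straight tail, or from \cite[Appendix]{HHT2}, which gives $\Omega_{21}(1)=-\pi i\log 2$) forces it to vanish. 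With these repairs, and the parallel argument for $\Omega_{31}(i)$, your proposal is a complete, independent proof.
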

\begin{proof}
We first prove the Proposition for $p=e^{i\varphi}$ with $0<\varphi<\pi/2$. In that case, we have by
\cite[Proposition 35]{HHT2}

\begin{equation}
\begin{split}
\Omega_{21}(1)-\Omega_{12}(1)&=4\pi i\log(\sin\varphi))-i(\pi-2\varphi)\log\left(\frac{1-\cos(\varphi)}{1+\cos(\varphi)}\right)\\
\Omega_{31}(i)-\Omega_{13}(i)&=-4\pi i\log(\cos(\varphi))+2i\varphi\log\left(\frac{1-\sin(\varphi)}{1+\sin(\varphi)}\right).
\end{split}
\end{equation}
On the other hand, by the shuffle product formula and using \eqref{eq:easy-integrals}
$$\Omega_{21}(1)+\Omega_{12}(1)=\Omega_1(1)\Omega_2(1)=i(\pi-2\varphi)\log\left(\frac{1-\cos(\varphi)}{1+\cos(\varphi)}\right)$$
$$\Omega_{31}(i)+\Omega_{13}(i)=\Omega_1(i)\Omega_3(i)=-2i\varphi\log\left(\frac{1-\sin(\varphi)}{1+\sin(\varphi)}\right).$$
Hence
$$\Omega_{21}(1)=2\pi i \log(\sin\varphi)$$
$$\Omega_{31}(i)=-2\pi i\log(\cos(\varphi)$$
proving the result for $p=e^{i\varphi}$.\\

For $p\in\C^+$, both $\frac{p^2-1}{2ip}$ and $\frac{p^2+1}{2p}$ are in $\C\setminus\R^-$ and  both sides of the formulas of Proposition \ref{prop:integrals2} are well-defined holomorphic functions in $p\in\C^+$
which coincide when $p\in\C^+\cap\S^1$, therefore they are equal.
\end{proof}

\subsubsection{First order derivative of the metric}
By Lemma \ref{rescaledform} we have 
$\varpi=\frac{dx_2\wedge dx_3}{x_1}$ for all $t.$
Recall from Proposition \ref{pro:classparahiggs} that the open dense subset of the Higgs bundle moduli space specified by trivial holomorphic bundles is independent of the weight $t$. Thus the complex structure $I$ is independent of $t$ and  also the holomorphic symplectic form $\omega_J + i\omega_K$ are independent of $t$. The same holds true for the
holomorphic symplectic form $\omega_J - i\omega_K$ for the complex structure $-I.$
This gives that the first order derivatives of the twisted holomorphic symplectic form is given by the derivative of its constant term
\begin{equation}\label{varphiconst}\varpi_0= - 2  \omega_I = \frac{1}{\cv{x}_{1,-1}}\left(
\frac{-x_{1,0}}{\cv{x}_{1,-1}}d\cv{x}_{2,-1}\wedge d\cv{x}_{3,-1}
+d x_{2,0}\wedge d\cv{x}_{3,-1}
+d\cv{x}_{2,-1}\wedge x_{3,0}
\right).\end{equation}
Using the formulas for $x'_{1,0}$, $x'_{2,0}$ and $x'_{3,0}$, we obtain

\begin{eqnarray}
\varpi_0'&=&
\frac{4\,i\,\Im(\Omega_{21}(1))}{\pi r^8}\left[\left(-r^4 +3(\uc v+u \vc)^2\right)|v|^2+r^8\left(|u|^2-|v|^2\right)\right]du\wedge d\uc
\label{mathematica-omega1}\\
&+&\frac{4\,i\,\Im(\Omega_{31}(i))}{\pi r^8}\left[\left(r^4 +3(\uc v-u \vc)^2\right)|v|^2+r^8\left(|v|^2-|u|^2\right)\right]du\wedge d\uc
\nonumber\\
&+&\frac{4\,i\,\Im(\Omega_{21}(1))}{\pi r^8}\left[\left(r^4 -3(\uc v+u \vc)^2\right)\uc v+r^8\left(-\uc v-3u\vc\right)\right]du\wedge d\vc
\nonumber\\
&+&\frac{4\,i\,\Im(\Omega_{31}(i))}{\pi r^8}\left[\left(-r^4 -3(\uc v-u \vc)^2\right)\uc v+r^8\left(\uc v-3u\vc\right)\right]du\wedge d\vc
\nonumber\\
&+&\frac{4\,i\,\Im(\Omega_{21}(1))}{\pi r^8}\left[\left(r^4 -3(\uc v+u \vc)^2\right)u \vc+r^8\left(-u \vc-3\uc v\right)\right]dv\wedge d\uc
\nonumber\\
&+&\frac{4\,i\,\Im(\Omega_{31}(i))}{\pi r^8}\left[\left(-r^4 -3(\uc v-u \vc)^2\right)u \vc+r^8\left(u \vc-3\uc v\right)\right]dv\wedge d\uc
\nonumber\\
&+&\frac{4\,i\,\Im(\Omega_{21}(1))}{\pi r^8}\left[\left(-r^4 +3(\uc v+u \vc)^2\right)|u|^2+r^8\left(|v|^2-|u|^2\right)\right]dv\wedge d\vc
\nonumber\\
&+&\frac{4\,i\,\Im(\Omega_{31}(i))}{\pi r^8}\left[\left(r^4 +3(\uc v-u \vc)^2\right)|u|^2+r^8\left(|u|^2-|v|^2\right)\right]dv\wedge d\vc.
\nonumber
\end{eqnarray}
where the $\Omega$-integrals are given by Proposition \ref{prop:integrals2}.
Note that the $(2,0)$ and $(0,2)$ terms $du\wedge dv$ and $d\uc\wedge d\vc$ vanish in accordance with the twistorial description of hyper-K\"ahler metrics \cite{HKLR}.

\subsection{Energy}\label{sec:energy}
The (Dirichlet) energy of a map $f\colon \Sigma\to (M,g)$ from a Riemann surface $\Sigma$ to a Riemannian manifold $(M,g)$ is given by
\[\mathcal E(f)=-\tfrac{1}{2}\int_\Sigma g(df\wedge* df)\]
(where $*$ denotes the Riemann surface $*$-operator). 
If $f$ is equivariant  (with respect to a discrete group acting holomorphically on $\Sigma$ and isometrically on $M$), the energy remains well-defined.
If $(\nabla,\Phi,h)$ is a solution to the self-duality equations, 
the equivariant harmonic map is given by the harmonic metric $f=h$, see for example \cite{Do, Li}.
Furthermore, the energy is then
\begin{equation}\label{def:ener}\mathcal E(h)=2i\int_\Sigma \tr(\Phi\wedge\Phi^*),\end{equation}
which is $L^2$-norm of the Higgs field $\Phi$. We refer to $\mathcal E(h)$ in \eqref{def:ener} as the energy of the self-duality solution $(\nabla,\Phi,h)$ and note that it is a K\"ahler potential for the hyperk\"ahler metric on the moduli space of solutions with respect to the complex structure $J,$ see \cite{Hi1}.

Let $\eta=\eta_t$ be given by the parameters $(x_1,x_2,x_3)$  provided by Theorem \ref{thm:IFT}.
Assume that $t=\frac{l}{k}$ and let $\pi:\Sigma_k\to\C P^1$ be the
$k$-fold covering as in \eqref{eq:sigma_k} totally branched over $p_1,\dots,p_4$.
A twistor line provided by Theorem \ref{computingthecomponent} then gives rise to a solution $(\nabla,\Phi,h)$
of the self-duality equations on $\Sigma_k$. 

Analogously to  \cite[Corollary 4.3]{HHT1} and similar to the proof of Lemma \ref{rescaledform}, 
 the  energy  of the self-duality solution $(\nabla,\Phi,h)$ corresponding to 
 the real section provided by $\eta=\eta_{x_1,x_2,x_3}$ on the covering surface $\Sigma_k$ is given by
\begin{equation}\label{eq:energy}{\mathcal E}(h)=-4\pi\sum_j \Res_{q_j}\tr\left(\eta_{-1}G_{j,1}G_{j,0}^{-1}\right).\end{equation}

In this equation $G_j=G_{j,0}+G_{j,1}\lambda+\dots$ is a desingularizing gauge, i.e., a holomorphic family of gauge transformations $G_j$ that removes the singularity of the pull-back potential $\pi^*\eta$ at $\wh p_j = \pi^{-1}(p_j)$ on $\Sigma_k$ and extends holomorphically to $\lambda=0$ (compare with \eqref{def:twistbundle} for fixed $\lambda$). The reason for the different sign in \eqref{eq:energy} compared to \cite[Corollary 4.3]{HHT1}
is that we are considering (equivariant) harmonic maps into hyperbolic 3-space here, with associated family of the form
$\nabla+\lambda^{-1}\Phi+\lambda\Phi^*$, while in \cite{HHT1} we considered harmonic maps into the 3-sphere with associated family of the form
$\nabla+\lambda^{-1}\Phi-\lambda\Phi^*$, see also \cite[Theorem 2.4]{BeHRo}.

A desingularizing gauge in a neighborhood of $\wt p_1$ is given by
$$G_1=\matrix{1&0\\ \frac{1-x_1}{x_2+ix_3}&1}\matrix{w^{-l}&0\\0&w^l}$$
with $w$ the local coordinate with $w^{k}=z-p_1$.
Then
$$\frac{1}{l}\Res_{\wt p_1}\tr\left(\wt\eta_{-1}G_{1,1}G_{1,0}^{-1}\right)
=1-x_{1,0}\,+\,\frac{x_{1,-1}(x_{2,0}+i x_{3,0})}{x_{2,-1}+i x_{3,-1}}
=1-x_{1,0}\,-\,\frac{(x_{2,-1}-i x_{3,-1})(x_{2,0}+i x_{3,0})}{x_{1,-1}}.$$
For the residues at the other punctures, we  substitute $(x_1,x_2,x_3)\to(-x_1,-x_2,x_3)$ to obtain
$$\frac{1}{l}\Res_{\wt p_2}\tr\left(\wt\eta_{-1}G_{2,1}G_{2,0}^{-1}\right)
=1+x_{1,0}\,+\,\frac{(x_{2,-1}+i x_{3,-1})(x_{2,0}-i x_{3,0})}{x_{1,-1}}$$
and substitute $(x_1,x_2,x_3)\to (x_1,-x_2,-x_3)$ to obtain for $k=1,2$
$$\Res_{\wt p_{k+2}}\tr\left(\wt\eta_{-1}G_{k+2,1}G_{k+2,0}^{-1}\right)
=\Res_{\wt p_k}\tr\left(\wt\eta_{-1}G_{k,1}G_{k,0}^{-1}\right).$$
This gives the following formula for the energy of the equivariant harmonic map on $\Sigma_k$
$${\mathcal E}=-8\pi\,l\left(1+\frac{i}{x_{1,-1}}(-x_{2,-1}x_{3,0}+x_{2,0}x_{3,-1})\right).$$

Therefore, we define for $t>0$ the {\em renormalized energy} by
\begin{equation}\label{def:renen}
\underline{\mathcal E}_t:=-8\pi\left(1+\frac{i}{x_{1,-1}}(-x_{2,-1}x_{3,0}+x_{2,0}x_{3,-1})\right).
\end{equation}

Using the central value  of the parameters we find  that $\mathcal E$ extends to $t=0$ with
\begin{equation}
\label{mathematica-energy0}
\underline{\mathcal E}=-8\pi(1-\rho r^2)=8\pi (\sqrt{1+r^{4}}-1),
\end{equation}
where $r^2=|u|^2+|v|^2.$ The first order derivatives of the parameters then yield
\begin{eqnarray}
\mathcal E'&=&-8\,\Im(\Omega_{21}(1))\left(|u|^4+|v|^4-3\uc^2v^2-3u^2\vc^2-4|u|^2|v|^2\right)\label{mathematica-energy1}\\
&&+8\,\Im(\Omega_{31}(i))\left(|u|^4+|v|^4+3\uc^2v^2+3u^2\vc^2-4|u|^2|v|^2\right).
\nonumber\end{eqnarray}
For the most symmetric case of $p=e^{i\pi/4}$, using $\Omega_{21}(1)=-\pi i\log(2)$ from Appendix A of \cite{HHT2} and $\Omega_{31}(i)=-\Omega_{21}(1)$, this simplifies to
\begin{equation}
\label{mathematica-energy1sym}
\mathcal E'=16\pi\log(2)\left(|u|^4+|v|^4-4|u|^2|v|^2\right).
\end{equation}
\begin{remark}
The energy $\mathcal E$ can also be used to give a different proof of Lemma \ref{lem:wrongsignlim}, because for $\rho <0$, the (rescaled) energy in the limit
\[\mathcal E=-8\pi (1+\sqrt{1+r^{4}})<0\]
is negative (instead of \eqref{mathematica-energy0}), 
but twistor lines must have non-negative energy.
In the case of nilpotent Higgs fields, the (negative) energy can be interpreted as (the negative of) the Willmore energy
of a Willmore surface, see \cite[Section 4]{BeHRo}.
\end{remark}
\subsection{Another hyper-K\"ahler metric}
Recall that the implicit function theorem (Theorem \ref{thm:IFT}) also applies for the choice of negative $\rho$ in the initial condition \eqref{eq:rho0}.
The corresponding real holomorphic sections are not twistor lines by Lemma \ref{lem:wrongsignlim}.
On the other hand, we can still apply the general twistor space theory \cite{HKLR} to obtain a hyper-K\"ahler metric locally.  In fact, the rescaled metric at $t=0$ is still the Eguchi-Hanson metric $g_{EH}$, and 
therefore, for small $t$ and small Higgs fields, the metric (and the smooth structure) remains non-degenerate.
Using Lemma
\ref{lem:signmean} we can compare the $t$-families of hyper-K\"ahler metrics $g^{\pm}$ for both choices of sign in the initial condition \eqref{eq:rho0}
and obtain
\[g^\pm=g_{EH}+\sum_{k\geq1} (\pm1)^k g_k t^k.\]

\subsection{Convergence of Hitchin metrics on symmetric components}

The aim of this section is to show Theorem \ref{thm:compactcon}, which we restate here for the convenience of the reader.
\begin{theorem*}
Let $C>0$ and $l\in\N^{>0}$ be fixed.  Consider the compact subspaces 
\[\mathcal C^l_k:=\{[\nabla,\Phi]\in \mathcal M^l_{SD}(\Sigma_k)\mid \mathcal E(\nabla,\Phi)\leq C\}\]
 of $\mathcal M^l_{SD}$ with the induced hyper-K\"ahler metric $g_k$.
Then for $k \rightarrow \infty$, we obtain smooth convergence
\[(\mathcal C^l_k,g_k)\longrightarrow (\{v\in T^*\C P^1\mid \mathcal E_{EH}(v)\leq C\}, 32\pi\,l\, g_{EH})/(\Z_2\times\Z_2).\]\end{theorem*}

For the setup of the proof, let $\pi\colon\Sigma_k\to\C P^1$ be the totally branched covering  defined in \eqref{eq:sigma_k}.
Consider the moduli space of polystable Higgs bundles, and the moduli space of solutions to the self-duality equations $\mathcal M_{SD}(\Sigma_k),$ respectively, 
equipped with the hyper-K\"ahler metric on its smooth locus $\mathcal M^{irr}_{SD}(\Sigma_k)$. Consider the natural $\Z_k$-action (generated by $\sigma$) on 
$\mathcal M_{SD}(\Sigma_k)$. The fixed point set of $\sigma$ in $\mathcal M^{irr}_{SD}(\Sigma_k)$ has multiple connected components, which can be distinguished as follows. A fixed point $(\nabla,\Phi,h)$ of the $\Z_k$ action can be represented by a $\Z_k$-equivariant solution of the self-duality equations, i.e., there exists a gauge transformation $g_\sigma\colon\Sigma_k\to\mathrm{SU}(2)$ such that
\[\sigma^*(\nabla,\Phi,h)=(\nabla,\Phi,h).g_{\sigma}.\]
By assumption, $(\nabla,\Phi,h)$ lies in the smooth locus and in particular $\nabla$ is irreducible. Hence $g_\sigma$ is unique up to sign.
Moreover, at every fixed point $q\in\Sigma_k$ of $\sigma$, we have $g_{\sigma}^k(q)=\pm\text{Id}.$ 
Hence, the eigenvalues of $g_\sigma(q)$ are of the form $\exp(\pm 2\pi i\tfrac{ l_q}{2k})$ for some integer $l_q\in\{0,\dots,k\}.$ The integers $l_{p_1},\dots,l_{p_4}$ are then the invariants of a connected component $\mathcal C$ of the fixed point set.
In fact, for $l_{p_1},\dots,l_{p_4}\in\{1,\dots,k-1\}$, this component is diffeomorphic
to $\mathcal M_{SD}(\C P^1,p_1,\dots,p_4,\tfrac{l_{p_1}}{2k},\dots\tfrac{l_{p_4}}{2k}).$ For even 
$l_{p_1},\dots,l_{p_4}$, this directly follows from the discussion in Section \ref{ssec:rat}. For odd $l_{p_j}$, the gauge
\eqref{def:twistbundle} has to be modified to involve square roots, see for example \cite{NaSt}, or \cite{HeHeSch} for the specific $\Sigma_k$. Likewise, one can first go to the double covering $\Sigma_{2k}\to\Sigma_k$ and then proof that
the solutions on $\Sigma_{2k}$ are actually obtained as pull-backs of solutions on $\Sigma_k.$

Note that the component  $\mathcal C$ is a hyper-K\"ahler submanifold 
of $\mathcal M^{irr}_{SD}(\Sigma_k)$ (see for example \cite[Theorem 8]{HSch}).
In fact, this easily follows from the uniqueness of solutions to the self-duality equations.
These components  can be `completed'
by adding  orbifold points consisting of gauge classes of reducible connections.

For fixed $l\in\N^{>0}$ and every $k\in\N^{>2l}$ we denote 
by $\mathcal M^l_{SD}(\Sigma_k)$ the  component of equivariant solutions corresponding to the integers
\[l_{p_1}=l_{p_2}=l_{p_3}=l_{p_4}=l.\]
It then follows that this space is isomorphic to the hyper-K\"ahler space
$\mathcal M_{SD}(\C P^1,p_1,\dots,p_4,\tfrac{l}{2k})$ up to the scaling constant $32\pi k,$ see 
Proposition \ref{pro:sympupdown}. We consider the energy (of the solution of the self-duality equations) $\mathcal E$
on $\mathcal M^l_{SD}(\Sigma_k)$, which corresponds up to a constant with
the energy $\underline{\mathcal E}_{\tfrac{l}{2k}}$ on 
$\mathcal M_{SD}(\C P^1,p_1,\dots,p_4,\tfrac{l}{2k})$ by Section \ref{sec:energy}.

We consider the Eguchi-Hanson metric $g_{EH}$ on $T^*\C P^1.$ 
By identifying  $T^*\C P^1$ with the blow up of $(\C^2\setminus\{(0,0)\})/\Z_2$ at the origin, we have the energy
\[\mathcal E_{EH}(u,v)=8\pi(\sqrt{1+(u\bar u+v\bar v)^2}-1)\]
defined in terms of the coordinates $(u,v)\in\C^2.$
Consider also the $\Z_2\times\Z_2$-action on $T^*\C P^1$ generated by
\[\zeta\in\C P^1\mapsto -\zeta\in\C P^1\quad\text{and}\quad \zeta\in\C P^1\mapsto \zeta^{-1}\in\C P^1.\]
 This defines an isometric action on $(T^*\C P^1,g_{EH})$, and has 6 fixed points contained in the zero section $\C P^1\subset T^*\C P^1.$  
Their images in  $T^*\C P^1/(\mathbb Z_2\times\mathbb Z_2)$ represent the 3 strictly semi-stable parabolic bundles
(with vanishing Higgs field), see Proposition \ref{pro:paramodul}. 
 Note that the energy $\mathcal E_{EH}$ is invariant under this action, and furthermore $\mathcal E_{EH}$ is
 a K\"ahler potential for $32\pi g_{EH}$ on $T^*\C P^1$ with respect to the complex structure $J$ (where $I$ is the 
 natural complex structure induced from the cotangent bundle of $\C P^1$).
 
 \begin{proof}[Proof of Theorem \ref{thm:compactcon}]
We first show that for $k$ large enough, any 
$[\nabla,\Phi]\in \mathcal C_k^l$ has (semi-)stable underlying holomorphic structure $\bar\partial^\nabla$.
Recall that every equivariant stable Higgs pair $(\bar\partial^\nabla,\Phi)\in \mathcal M^l_{SD}$ gives rise to stable strongly parabolic Higgs
pair on $\C P^1$ with 4 singular points and parabolic weight $t=\tfrac{l}{2k}.$ If the underlying holomorphic structure $\bar\partial^\nabla$ is unstable, then also the  parabolic structure is unstable. By Lemma \ref{lem:unstablepara},
there is only a complex affine line of corresponding stable Higgs pairs with unstable underlying holomorphic structure.
The Higgs pair with least energy in this affine line has vanishing determinant, and it remains to estimate its energy.
This energy is given by $2\pi d$, where $d$ is the degree of the destabilizing holomorphic line bundle $L$, see for example the proof of \cite[Proposition 7.1]{Hi1}. It therefore remains to estimate the degree $d$, which depends on $k.$
Note that the equivariant nilpotent Higgs field is determined by a holomorphic section
\[\phi\in H^0(\Sigma_k,KL^{-2}).\]
Since $\phi$ is equivariant with respect to $\sigma$ as well, it must have a zero of order $(l-1)$ at all 4 singular points 
$p_1,\dots,p_4,$ compare with \cite[Theorem 3.3]{HeHeSch}. Hence, the degree of $KL^{-2}$ is $4(l-1)$ and since the genus of $\Sigma_k$ is $(k-1)$ we obtain
\[d=k-2l>\frac{C}{2\pi}\] for all $k$  large enough.

By Theorem \ref{thm:IFT} 
there exists for every $\widetilde C>0$ an $\epsilon>0$ such that for all $(u,v)\in\C^2$  with $0<|(u,v)|^2<\widetilde C$ and all
$\epsilon>t>0$, we have constructed a real holomorphic section in $\mathcal M^{par}_{DH}(\C P^1,p_1+\dots+p_4,t)$ with parabolic Higgs field
$t\Psi$
for $\Psi$ as in \eqref{eq:psiAs} with residues \eqref{eq:res1higgs} and \eqref{eq:psiAs2}. By Theorem \ref{blowuplimit} and Theorem \ref{computingthecomponent},
these real holomorphic sections are twistor lines and our construction extends  to polystable parabolic structures with  vanishing Higgs fields.
Denote this subset of strongly parabolic Higgs fields (with trivial underlying holomorphic bundle) by $\wt{\mathcal C}_t,$
and (by a slight abuse of notation) use $(u,v)$ as coordinates on it.

From the results of Section \ref{sec:energy} and particularly \eqref{mathematica-energy0}, we can chose
for all $C>0$ a constant $\widetilde C>0$ and $\epsilon>0$ such that
\[\underline{\mathcal E}_t(u,v)<\tfrac{C}{l}\quad \Longrightarrow \quad (u,v)\in \wt{\mathcal C}_t.\]
Let $t=\tfrac{l}{2k}<\epsilon.$
By the definition of $\underline{\mathcal E}_t$ and by the results of Section \ref{sec:energy} we  have 
\[\mathcal E(\nabla,\Phi,h)=l\underline{\mathcal E}_{\tfrac{l}{2k}}(u,v)\]
 for the twisted lift $(\nabla,\Phi,h)$ 
corresponding
to the strongly parabolic Higgs field given by $(u,v).$

Recall that
unless $(\nabla,\phi)$ is a fixed point of the
$\C^*$-action,
the energy along rays  $r\in\R^{\geq0}\mapsto \mathcal E(\nabla,r\phi)$ is strictly increasing, see for example the proof of \cite[Proposition 9.1]{Hi1}. For $k$ large enough, we therefore obtain that $\mathcal C_k$ is completely contained
in   $\wt{\mathcal C}_{\tfrac{l}{2k}}$ (after taking the twisted lift), and we are therefore in the domain covered by our implicit function theorem.
Thus, the theorem then directly follows   from Proposition \ref{limitEH} and the results from Section \ref{sec:res-metric}.
\end{proof}

\section{Higher order derivatives}\label{hod}
\subsection{The algorithm}
Just as in \cite[Section 5]{HHT2} we give an iterative algorithm for computing higher order derivatives of the parameters in terms of the multiple polylogarithm (MPL) function.  The difference to the minimal surface   case \cite{HHT2} is that we have no extrinsic closing conditions but complex parameters here.

The computation of the higher order derivatives of the parameters involves 
the iterated integral $\Omega_{i_1,\cdots,i_n}$ defined recursively by
\begin{equation}\label{eqn:def:omegaintegral}
\Omega_{i_1,\cdots,i_n}(z)=\int_0^z\Omega_{i_1,\cdots,i_{n-1}}\omega_{i_n},
\end{equation}
where $\omega_i$ is as in \eqref{omega} for $i=1,2,3.$ It is shown in \cite[A.3]{HHT2}
that (and how) these iterated integrals can be expressed in terms of multiple polylogarithm $ \Li_{n_1,\ldots,n_d} $. For  positive integers $n_1,\ldots,n_d \in \mathbb{Z}_{>0} $, and $z_i \in \mathbb{C}^d $ in the region given by $ |z_i \ldots z_d| < 1 $, the multiple polylogarithm  \( \Li_{n_1,\ldots,n_d} \) is defined by
	\begin{equation*}
		\Li_{n_1,\ldots,n_d}(z_1,\ldots,z_d)
	=\sum_{0<k_1<k_2<\cdots<k_d}\frac{z_1^{k_1}\cdots z_d^{k_d}}{k_1^{n_1}\cdots k_d^{n_d}} \,.
	\end{equation*}
	This function is extended to a multivalued function by analytic continuation.  Here the \emph{depth} $d $ counts the number of indices $n_1,\ldots,n_d $, and the \emph{weight} is given by the sum $ n_1 + \cdots + n_d $ of the indices. The functions $\Omega_{i_1,\cdots,i_n}(z)$ can be expressed in terms of multiple polylogarithms of depth $n$ and weight  $n.$

In the following we denote by $x_i^{(n)}$, $\mathcal P^{(n)}$, $\mathcal Q^{(n)}$ the  $n$-th order derivatives of $x_i$, $\mathcal P$, $\mathcal Q$ with respect to $t$ at $t=0$ and  we suppress the dependence of $(u,v)$ and $p.$}
\begin{proposition}\label{lastPro}
For $n\geq 1$ and $1\leq i\leq 3$, $x_i^{(n)}$ are polynomials (with respect to $\lambda$) of degree at most $n+1$ and the coefficients of $\mathcal P^{(n+1)}$, $\mathcal Q^{(n+1)}$ are Laurent polynomials of degree at most $n+1$, which can be expressed explicitly in terms of  multiple-polylogarithms  of depth $n+1$ and weight $n+1$.
\end{proposition}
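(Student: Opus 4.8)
The plan is to prove the statement by induction on $n$, following the iterative scheme already set up for the first-order derivatives in Proposition \ref{prop:derivatives} and mirroring the structure of \cite[Section 5]{HHT2}. The base case $n=1$ is exactly Proposition \ref{prop:derivatives}, where we saw that $x_j'$ is a polynomial of degree at most $2$ in $\lambda$ and that the relevant integrals $\Omega_{jk}(1)$, $\Omega_{jk}(i)$ are depth-$2$, weight-$2$ quantities expressible through multiple polylogarithms by Proposition \ref{prop:integrals2}. For the inductive step, I would differentiate the Cauchy problem \eqref{eq:cauchy} $n+1$ times at $t=0$. Writing $d\Phi_t = \Phi_t\eta_t$ and using the Leibniz rule, the $(n+1)$-st derivative $\Phi^{(n+1)}$ is expressed as an iterated integral of a sum of products $\binom{n+1}{k}\Phi^{(k)}\eta_t^{(n+1-k)}$; since $\eta_t = t\sum_j x_j(\lambda)\mathfrak m_j\omega_j$ is linear in $t$ up to the $t$-dependence of $x_j$, the only contributions come from $\eta_t^{(m)} = m\sum_j x_j^{(m-1)}\mathfrak m_j\omega_j$. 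By the inductive hypothesis each lower-order $x_j^{(k)}$ is a polynomial of degree at most $k+1$, so each integrand is a product of such polynomials with the holomorphic $1$-forms $\omega_j$, and the integral introduces one further iterated-integral factor $\Omega_{i_1\cdots}$. This is precisely where the weight and depth increase by exactly one at each step, so that the entries of $\mathcal P^{(n+1)} = \Phi^{(n+1)}(1)$ and $\mathcal Q^{(n+1)} = \Phi^{(n+1)}(i)$ become Laurent polynomials in $\lambda$ whose coefficients are (by \cite[A.3]{HHT2}) expressible in multiple polylogarithms of depth and weight $n+1$.

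Next I would recover $x_j^{(n)}$ itself from the monodromy data. Exactly as in the proof of Proposition \ref{prop:derivatives}, differentiating the three equations $\mathfrak p = \mathfrak p^*$, $\mathfrak q = \mathfrak q^*$ and $\mathcal K = 1$ an appropriate number of times produces linear equations for $(x_1^{(n)})^+$, $(x_2^{(n)})^+$, $(x_3^{(n)})^+$ with the degree-$2$ polynomials $(\cvx_k\cvx_\ell)^+$ as explicit inhomogeneous data, together with linear equations for the constant terms $x_{j,0}^{(n)}$ coming from the coefficients of $\lambda^{-1},\lambda^0,\lambda$ in $\mathcal K^{(n)}=0$. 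The structure of Proposition \ref{prop:traces}, which expresses $\mathfrak p,\mathfrak q,\mathfrak r$ as polynomials in the entries of $\mathcal P$ and $\mathcal Q$, guarantees that $\mathfrak p^{(n+1)}$, $\mathfrak q^{(n+1)}$, $\mathfrak r^{(n+1)}$ are polynomials of degree at most $n+1$ (one higher than the $x_j^{(n)}$ because of the quadratic structure), again with multiple-polylogarithm coefficients. Taking positive parts and projecting exactly as in \eqref{eq:p''star}--\eqref{eq:r''star} determines $x_j^{(n)}$; the reality relation $x_j=x_j^*$ fixes the negative part, and $\cvx_1\not\equiv 0$ lets one divide to extract $(x_1^{(n)})^+$, so all three are polynomials of degree at most $n+1$.

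The invertibility of the linear system at each stage is inherited from Theorem \ref{thm:IFT}: the partial differential used in the implicit function theorem is an isomorphism, and the same operator controls every order of the Taylor expansion, so the higher derivatives are uniquely and explicitly solvable. The main obstacle I expect is the bookkeeping that guarantees the degree bound $n+1$ is \emph{not} exceeded and that depth and weight each increase by exactly one rather than accumulating faster. The potential danger is the quadratic (and, via $\mathfrak r$, effectively cubic) nonlinearity in Proposition \ref{prop:traces} together with the products $\Phi^{(k)}\eta^{(m)}$ in the Leibniz expansion: a naive count could suggest the degree or weight growing multiplicatively. The key point to verify carefully is that the shuffle relation \eqref{eq:shuffle-depth2} (and its higher-depth analogues) collapses products of iterated integrals $\Omega_{i_1\cdots i_a}\Omega_{j_1\cdots j_b}$ into a $\Z$-linear combination of iterated integrals of depth $a+b$, so that products of a depth-$a$ and a depth-$b$ object land in weight $a+b$ rather than proliferating, and that the fixed degree-$2$ Laurent structure of $\cvx_j$ keeps the $\lambda$-degree additive in the controlled way recorded above. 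Once this weight-and-degree accounting is pinned down, the explicit expressibility in multiple polylogarithms of depth and weight $n+1$ follows from \cite[A.3]{HHT2}.
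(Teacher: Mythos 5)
Your overall architecture --- induction on $n$ with Proposition \ref{prop:derivatives} as base case, the Leibniz/iterated-integral expansion of $\mathcal P^{(n+1)}$, $\mathcal Q^{(n+1)}$ (exactly the formula the paper imports from \cite[Proposition 37]{HHT2}), and the extraction of $(x_2^{(n)})^+$ and $(x_3^{(n)})^+$ by taking positive parts of $\mathfrak p^{(n+1)}=(\mathfrak p^{(n+1)})^*$ and $\mathfrak q^{(n+1)}=(\mathfrak q^{(n+1)})^*$ --- coincides with the paper's proof, and the shuffle-relation bookkeeping you flag is handled there just as you suggest.

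The genuine gap is in your treatment of $x_1^{(n)}$. You propose to imitate the first-order step \eqref{eq:r''star}: combine $\mathcal K^{(n)}=0$ with the $\mathfrak p$, $\mathfrak q$ relations and then ``divide by $\cvx_1\not\equiv 0$''. At first order this is legitimate only because the right-hand side of \eqref{eq:r''star} carries the explicit factor $\cvx_1\cvx_2\cvx_3$ (and $\low{\mathcal K}'=0$). For $n\geq 2$ the analogous identity reads $\cvx_1\big(x_1^{(n)}-(x_1^{(n)})^*\big)=G$, where $G$ is assembled from $\low{\mathfrak p}^{(n+1)}$, $\low{\mathfrak q}^{(n+1)}$ and $\low{\mathcal K}^{(n)}$, none of which is manifestly divisible by $\cvx_1$. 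Non-vanishing of $\cvx_1$ as a function does not make $G/\cvx_1$ a Laurent polynomial: generically $\lambda G/P_1$ has poles at the roots of $P_1$, and its Laurent expansion on the annulus then has an infinite positive part (a geometric series in $\overline{\mu_1}\lambda$ coming from the root $-1/\overline{\mu_1}$), which is perfectly compatible with $x_1^{(n)}\in\mathcal W_a^{\geq 0}$ but destroys the degree bound --- and the degree bound is the heart of the proposition. What is missing is a proof that $G$ vanishes at the two roots of $P_1$. The paper circumvents this by a different argument: it multiplies $\mathcal K^{(n)}=0$ by $\lambda$ to get \eqref{eq:higher-order1}, so that $P_1x_1^{(n)}$ equals a known polynomial of degree at most $n+3$ (up to the unknown constants $x_{2,0}^{(n)}$, $x_{3,0}^{(n)}$); it then observes that when $|u|=|v|$ both roots of $P_1$ lie in $\D_a$, where $x_1^{(n)}$ is holomorphic, forcing the numerator to vanish at both roots, hence $x_1^{(n)}$ is a polynomial of degree at most $n+1$; this persists for all $(u,v)$ by real-analytic continuation. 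Finally $(x_1^{(n)})^+$ is read off by Euclidean division by $P_1$, and the three constants $x_{j,0}^{(n)}$ come from the coefficients of $\lambda^0,\lambda^1,\lambda^2$ in \eqref{eq:higher-order1}, a $3\times 3$ system with determinant $\tfrac{i}{2}\rho r^6\neq 0$. Note also that invoking the invertibility of the IFT linearization, as you do, only gives existence and uniqueness of $x^{(n)}$ in $\mathcal W_a^{\geq 0}$, not finiteness of the $\lambda$-degree; you would need to supply an argument of the paper's type (or prove directly that $G(\mu_1)=G(-1/\overline{\mu_1})=0$) to close the proof.
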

\begin{proof}
The proof is by induction on $n$. Let $H_n$ be the statement of the proposition.
We have already proved $H_1$ in Section \ref{firstderivative}. Fix $n\geq 1$ and assume that $H_k$ is true for all $1\leq k\leq n-1$ and let the index `lower' denote all terms of a quantity that depends only on derivatives of lower order.
As in  \cite[Proposition 37]{HHT2} we have:
$$\mathcal P^{(n+1)}=\sum_{\ell=1}^{n+1}\frac{(n+1)!}{(n+1-\ell)!}
\sum_{i_1,\cdots,i_{\ell}}x_{i_1,\cdots,i_{\ell}}^{(n+1-\ell)}\mathfrak m_{i_1,\cdots,i_{\ell}}\Omega_{i_1,\cdots,i_{\ell}}(1)$$
with
$$x_{i_1,\cdots,i_{\ell}}=\prod_{j=1}^{\ell} x_{i_j}
\and\mathfrak m_{i_1,\cdots,i_{\ell}}=\prod_{j=1}^{\ell}\mathfrak m_{i_j}.$$

We rewrite this as
$$\mathcal P^{(n+1)}=(n+1)\sum_{i=1}^3 x_i^{(n)}\mathfrak m_i\Omega_i(1)
+\low{\mathcal P}^{(n+1)}$$
with
$$\low{\mathcal P}^{(n+1)}=\sum_{\ell=2}^{n+1}\frac{(n+1)!}{(n+1-\ell)!}
\sum_{i_1,\cdots,i_{\ell}}x_{i_1,\cdots,i_{\ell}}^{(n+1-\ell)}\mathfrak m_{i_1,\cdots,i_{\ell}}\Omega_{i_1,\cdots,i_{\ell}}(1).$$ 
Similar formula holds for $\mathcal Q^{(n+1)}$ with
$\Omega_{i_1,\cdots,i_{\ell}}(1)$ replaced by $\Omega_{i_1,\cdots,i_{\ell}}(i)$.
Using Leibniz rule we have
$$\mathfrak p^{(n+1)}=2\pi (n+1) x_3^{(n)}+\low{\mathfrak  p}^{(n+1)}$$
with
$$\low{\mathfrak p}^{(n+1)}=\sum_{k=1}^n {n+1\choose k}\left(
\mathcal P_{11}^{(k)}\mathcal P_{21}^{(n+1-k)}-\mathcal P_{12}^{(k)}\mathcal P_{22}^{(n+1-k)}\right)+\lowind{\mathcal P}{21}^{(n+1)}-\lowind{\mathcal P}{12}^{(n+1)}$$
and from $\mathfrak p^{(n+1)}=(\mathfrak p^{(n+1)})^*$ we obtain by taking the positive part:
$$(x_3^{(n)})^+=\frac{1}{2\pi(n+1)}\left((\low{\mathfrak p}^{(n+1)})^{-*}-(\low{\mathfrak p}^{(n+1)})^+\right).$$
In the same way
$$\mathfrak q^{(n+1)}=2\pi (n+1) x_2^{(n)}+\low{\mathfrak q}^{(n+1)}$$
with $$\low{\mathfrak q}^{(n+1)}=i\sum_{k=1}^n{n+1\choose k}\left(
\mathcal Q_{11}^{(k)}\mathcal Q_{21}^{(n+1-k)}+\mathcal Q_{12}^{(k)}\mathcal Q_{22}^{(n+1-k)}\right)+i\lowind{\mathcal Q}{21}^{(n+1)}+i\lowind{\mathcal Q}{12}^{(n+1)}$$
and we obtain $$(x_2^{(n)})^+=\frac{1}{2\pi(n+1)}\left((\low{\mathfrak q}^{(n+1)})^{-*}-(\low{\mathfrak q}^{(n+1)})^+\right).$$
By inspection and the induction hypothesis, $(x_2^{(n)})^+$ and $(x_3^{(n)})^+$
are polynomials of degree at most $n+1$.
\begin{remark} We could also determine $\Im(x_{3,0}^{(n)})$ and $\Im(x_{2,0}^{(n)})$ from
the zero part of $\mathfrak p^{(n)}=(\mathfrak p^{(n)})^*$ and $\mathfrak q^{(n)}=(\mathfrak q^{(n)})^*$, but it is simpler to determine the three complex parameters
$x_{i,0}^{(n)}$ by solving a complex linear system, see below. 
\end{remark}
Using Leibniz rule:
$$0=\mathcal K^{(n)}=2\sum_{i=1}^3\cv{x}_ix_i^{(n)}+\low{\mathcal K}^{(n)}$$
with
$$\low{\mathcal K}^{(n)}=\sum_{i=1}^3\sum_{k=1}^{n-1}
{n\choose k}x_i^{(k)}x_i^{(n-k)}.$$
We multiply by $\lambda$ and obtain (recall that $P_j=\lambda \cv{x}_j$):
\begin{equation}
\label{eq:higher-order1}
P_1x_1^{(n)}+P_2 x_{2,0}^{(n)}+P_3 x_{3,0}^{(n)}=
-\frac{\lambda}{2}\low{\mathcal K}^{(n)}-P_2(x_2^{(n)})^+-P_3(x_3^{(n)})^+.
\end{equation}
The right hand side of \eqref{eq:higher-order1} is already known and is a polynomial in $\lambda$ of degree at most $n+3$.
Hence $P_1 x_1^{(n)}$ is a polynomial of degree at most $n+3$.
When $u\sim v$, then both roots of $P_1$ are in $\D_{a}$,
so since $x_1^{(n)}$ cannot have poles in $\D_{a}$, it must be a polynomial of degree at most $n+1$. This remains true for all $(u,v)$ by analyticity.
Let $Q,R$ be the quotient and remainder of the division of the right side of \eqref{eq:higher-order1} with respect to $P_1$.
Then
$$(x_1^{(n)})^+=Q^+$$
and looking at the coefficients of $\lambda^0$, $\lambda^1$ and $\lambda^2$ in
$\eqref{eq:higher-order1}$, we obtain a system of three complex equations with
unknowns $x_{1,0}^{(n)}$, $x_{2,0}^{(n)}$ and $x_{3,0}^{(n)}$,
whose determinant is
$$\det\left(\cv{x}_{i,j}\right)_{-1\leq i\leq 1\atop 1\leq j\leq 3}=\tfrac{1}{2}i\rho r^6\neq 0$$.
\end{proof}

As a direct corollary we can express the
hyper-K\"ahler metric of the moduli space $\mathcal M (t)$  in terms of multiple polylogarithms.
\begin{proof}[Proof of Theorem \ref{mainT}]
By twistor theory, we can compute the hyper-K\"ahler metric explicitly in terms of the relative holomorphic symplectic form
$\varpi=32\pi\frac{dx_2\wedge dx_3}{x_1}.$
Since $x_1,x_2,x_3$ depend real analytic on $t$ the theorem follows from Proposition \ref{lastPro}.
\end{proof}

The algorithm has been implemented and using Mathematica we obtain, for example, for $p = e^{i\pi/4}$
\begin{equation}
\label{mathematica-energy2}
\mathcal E''=\frac{32\,\pi}{\rho r^6}(|u|^2-|v|^2)^2\big(3 r^8 +2 r^4+4 |u|^2|v|^2\big)\log(2)^2.
\end{equation}

 \subsection{Results for the nilpotent cone of the most symmetric case}
All following computations are conducted and simplified using Mathematica.
 The fully documented Mathematica notebook can be found with the arxiv version of the paper or on the webpage
\begin{verbatim}
https://www.idpoisson.fr/traizet/
\end{verbatim}

 We restrict to the case $p=e^{i\pi/4}$ and assume $v=p u$,
 i.e., by \eqref{eq:detPsi} we are in the nilpotent cone.
 The computations of higher order derivatives then simplify to:
   $$\mathcal E''=0$$
  \begin{equation}
  \label{mathematica-energy3}
  \mathcal E'''=-192\, \pi | u| ^4 \left(127 | u| ^4+20\right)\zeta (3)
  \end{equation}
where $\zeta$ is the Riemann $\zeta$-function.
If $\wt\varpi$ denotes the restriction of $\varpi$ to the nilpotent cone, we obtain
$$\wt\varpi'=8\, i |u|^2\log(2)\, du\wedge d\overline{u}$$
$$\wt\varpi''=0$$
\begin{equation}
\label{mathematica-omega3}
\wt\varpi'''=96 i\,\zeta(3)(127|u|^6+10 |u|^2)\,du\wedge d\uc.
\end{equation}

\subsection{New $\Omega$-identities}\label{nOid}
Write
$$\varpi=\sum_{k=0}^{\infty}\varpi_k\lambda^k.$$
We can compute $\varpi''_k$ for $k=0,1,2...$ from the derivatives of the parameters.
On the other hand, we know from section \ref{sec:goldman} and Lemma \ref{rescaledform} that $\varpi''_k=0$ for $k\geq 1$, and also
$$\varpi''_{0,u,v}=\varpi''_{0,\uc,\vc}=0.$$
Note that
$\varpi''_{0,\uc,\vc}=0$
is trivial because of \eqref{varphiconst} and because $d\cv{x}_{2,-1}$ and $d\cv{x}_{3,-1}$ are holomorphic 1-forms so vanish on
$(\frac{\partial}{\partial \uc},\frac{\partial}{\partial\vc})$.
On the other hand, $\varpi''_{0,u,v}=0$ is not trivial and gives
identities involving $\Omega$-integrals.
In this section, we use the notation
$$\Omega_{i_1,\cdots,i_n}=\Omega_{i_1,\cdots,i_n}(1)
\quad\text{and}\quad
\Theta_{i_1,\cdots,i_n}=\Omega_{i_1,\cdots,i_n}(i).$$
From
$$\varpi''_0=\frac{1}{\cv{x}_{1,-1}}\left(
\frac{-x_{1,0}''}{\cv{x}_{1,-1}}d\cv{x}_{2,-1}\wedge d\cv{x}_{3,-1}
+d x_{2,0}''\wedge d\cv{x}_{3,-1}
+d\cv{x}_{2,-1}\wedge dx_{3,0}''
\right)$$
we obtain
\begin{align}
\label{mathematica-omega''uv}
\varpi''_{0,u,v}&=\frac{ \rho}{\pi^2}\big[
6(u\uc^2\vc-\uc v\vc^2)(I_1+\overline{I_1})
+8(\uc^3 v - u\vc^3)(I_2+\overline{I_2})\\
&+\left(\frac{u\uc^6}{\vc^3}-\frac{v\vc^6}{\uc^3}+\frac{3\uc^5 v}{\vc^2}-\frac{3u\vc^5}{\uc^2}\right)(I_3+\overline{I_3})\big]
\nonumber\end{align}
with
$$\begin{cases}
I_1=6\pi(\Omega_{333}-\Theta_{222})+i((\Omega_{21})^2+(\Theta_{31})^2)
+2\pi(\Omega_{223}-\Theta_{332})-8\pi(\Omega_{311}-\Theta_{211})
+10 i \Omega_{21}\Theta_{31}\\
I_2=i((\Omega_{21})^2-(\Theta_{31})^2)+2\pi(\Omega_{223}+\Theta_{332}+\Omega_{311}+\Theta_{211})-4\pi(\Omega_{333}+\Theta_{222})\\
$$I_3=-i((\Omega_{21})^2+(\Theta_{31})^2)+2\pi(\Omega_{333}-\Theta_{222}-\Omega_{223}+\Theta_{332})-2i\Omega_{21}\Theta_{31}.\end{cases}$$
Since $\varpi''_{0,u,v}=0$, $I_1$, $I_2$ and $I_3$ are pure imaginary. Since they are holomorphic functions of $p\in\C^+$, they must be constant. We find the constants by evaluating at $p=e^{i\pi/4},$ where all integrals are known 
from \cite[Appendix]{HHT2}, and find
$$\begin{cases}
I_1=-i\frac{\pi^4}{3}\\
I_2=0\\
I_3=-i\pi^4.\end{cases}$$
Using the elementary values
$$\Omega_{333}=\frac{1}{6}(\Omega_3)^3=\frac{-i\pi^3}{6}
\quad\text{and}\quad
\Theta_{222}=\frac{1}{6}(\Theta_2)^2=\frac{i\pi^3}{6}$$
we obtain the following identities for all $p\in\C^+$:
\begin{equation}
\label{eq:new-identities}
\begin{cases}
\Omega_{223}+\Omega_{311}=-\frac{i}{2\pi}(\Omega_{21})^2\\
\Theta_{211}+\Theta_{332}=\frac{i}{2\pi}(\Theta_{31})^2\\
\Omega_{223}-\Theta_{332}=-\frac{i}{2\pi}(\Omega_{21}+\Theta_{31})^2+\frac{i\pi^3}{6}.
\end{cases}\end{equation}

From higher order derivatives in $t$ and higher order terms $\varpi_k$ we expect a hierarchy of identities for $\Omega$-values.
Another source of identities is the character variety of the 4-punctured sphere. Analogously to Remark \eqref{remark:r''}, taking the 4th-order derivative of
\eqref{mathematica-fricke} gives rise to the following three identities, which express linear combinations of $\Omega$-integrals of depth 3 as a function of $\Omega$-integrals of depth at most 2, and are non-trivial in the sense that they do not follow from shuffle product relations alone:
   
\begin{align}
\label{mathematica-ID1}
&\Omega_{2,1,2}
-\Theta_{1,2,1}
-\Theta_{2,1,2}=
\\&
-\frac{1}{2} \Omega_{2} \Omega_{1,2}
+\frac{1}{2}\Omega_{2} \Omega_{2,1}
-\frac{i \Omega_{1,2}^2}{4 \pi }
+\frac{3 i\Omega_{2,1}^2}{4 \pi }
-\frac{i \Omega_{1,2}\Omega_{2,1}}{2 \pi}
+\frac{1}{2} \Theta_{1}\Theta_{1,2}
+\frac{1}{2} i \pi \Theta_{1,2}
-\frac{1}{2} \Theta_{1}\Theta_{2,1}
\nonumber\\&
-\frac{1}{2} i \pi \Theta_{2,1}
+\frac{i \Omega_{1}^2 \Omega_{2}^2}{4 \pi}
+\frac{1}{2} \Omega_{1} \Omega_{2}^2
-\pi ^2\Omega_{1}
+\frac{1}{2} i \pi \Theta_{1}^2
-\frac{1}{4} \Theta_{1}\Theta_{2}^2
+\frac{1}{4} \pi ^2\Theta_{1}
+\frac{2 i \pi ^3}{3}
 \nonumber\end{align}

\begin{align}
 \label{mathematica-ID2}
 & \Omega_{1,3,1}
 +\Omega_{3,1,3}
  -\Theta_{3,1,3}=
  \\&
 -\frac{1}{2} \Omega_{1} \Omega_{1,3}
 +\frac{1}{2}\Omega_{1} \Omega_{3,1}
 +\frac{1}{2} i \pi \Omega_{1,3}
 -\frac{1}{2} i \pi \Omega_{3,1}
 -\frac{i \Theta_{1,3}^2}{4 \pi }
 +\frac{3 i\Theta_{3,1}^2}{4 \pi }
 +\frac{1}{2} \Theta_{3}\Theta_{1,3}
 -\frac{1}{2} \Theta_{3}\Theta_{3,1}
 -\frac{i \Theta_{1,3}\Theta_{3,1}}{2 \pi}
 \nonumber \\&
 +\Omega_{1}^2\Theta_{1}
 -\Omega_{1}\Theta_{1}^2
 -\frac{1}{3}\Omega_{1}^3
 +\frac{1}{2} i \pi \Omega_{1}^2
 +\frac{1}{4} \Omega_{3}^2\Omega_{1}
 -\frac{1}{4} \pi ^2\Omega_{1}
 +\frac{1}{3} \Theta_{1}^3
 +\frac{i\Theta_{1}^2 \Theta_{3}^2}{4 \pi }
 -\frac{1}{2}\Theta_{1} \Theta_{3}^2
 +\pi ^2 \Theta_{1}
 +\frac{i \pi ^3}{3}
  \nonumber \end{align}

\begin{align}
  \label{mathematica-ID3}
&\Omega_{2 3 2}-\Theta_{3 2 3}=\\
&\frac{1}{2} \Omega_{2}\Omega_{2 3}
-\frac{1}{2} \Omega_{2}\Omega_{3 2}
+\frac{1}{2} \Theta_{3}\Omega_{3 2}
-\frac{1}{2} \Theta_{3}\Omega_{2 1}
+\Theta_{3}\Omega_{2 3}
+\frac{1}{2} \Theta_{3}\Theta_{2 3}
-\frac{1}{2}\Theta_{3} \Theta_{3 1}
\nonumber\\&
-\frac{i \Omega_{2 1}\Theta_{2 3}}{2 \pi }
-\frac{i \Omega_{3 2}\Theta_{2 3}}{2 \pi }
+\frac{3 i \Omega_{2 1}\Theta_{3 1}}{2 \pi }
-\frac{i \Omega_{3 2}\Theta_{3 1}}{2 \pi }
-\frac{i \Omega_{2 1}\Omega_{3 2}}{2 \pi }
-\frac{i\Theta_{2 3} \Theta_{3 1}}{2 \pi }
\nonumber\\&
+\frac{3 i \Omega_{2 1}^2}{4 \pi}
-\frac{i \Omega_{3 2}^2}{4 \pi }
-\frac{i\Theta_{2 3}^2}{4 \pi }
+\frac{3 i\Theta_{3 1}^2}{4 \pi }
+\frac{i \pi \Omega_{2}^2}{4}
+\frac{i\pi  \Theta_{3}^2}{4}
-i \pi \Omega_{2 1}
-i \pi \Omega_{3 2}
\nonumber\\&
-i \pi  \Theta_{2 3}
-i \pi \Theta_{3 1}
-\pi ^2 \Omega_{2}
+\pi ^2 \Theta_{3}
-\frac{i\pi ^3}{3}\nonumber.
   \end{align}

\end{document}